\pgfplotsset{compat=1.9}
\def\norm#1#2{\|#1\|_{#2}}
\def\matrixnorm#1{\|#1\|}
\def\set#1#2{\big\{#1\,:\,#2\big\}}
\def\dual#1#2#3{{\langle#1\,,\,#2\rangle}_{#3}}
\def\energydual#1#2#3{{\langle\!\langle\!\langle#1\,,\,#2\rangle\!\rangle\!\rangle}_{#3}}
\def\N{\mathbb{N}}
\def\R{\mathbb{R}}
\def\SS{\mathcal S}
\def\TT{\mathcal T}
\def\NN{\mathcal N}
\def\MM{\mathbf M}
\def\ff{\boldsymbol{f}}
\def\mm{\boldsymbol{m}}
\def\nn{\boldsymbol{n}}
\def\xx{\boldsymbol{x}}
\def\vv{\boldsymbol{v}}
\def\ww{\boldsymbol{w}}
\def\qq{\boldsymbol{q}}
\def\uu{\boldsymbol{u}}
\def\HH{\boldsymbol{H}}
\def\LL{\boldsymbol{L}}
\def\heff{\boldsymbol{h}_{\textrm{\textup{eff}}}}
\def\Cmesh{C_{\textbf{\textup{mesh}}}}
\def\ellex{\ell_{\textrm{\textup{ex}}}}
\def\pphi{\boldsymbol{\phi}}
\def\ppsi{\boldsymbol{\psi}}
\def\mmu{\boldsymbol{\mu}}
\def\nnu{\boldsymbol{\nu}}
\def\ppi{\boldsymbol{\pi}}
\def\Ppi{\boldsymbol{\Pi}}
\def\vvarphi{\boldsymbol{\varphi}}
\def\matrixA{{\bf A}}
\def\matrixB{{\bf B}}
\def\matrixH{{\bf H}}
\def\matrixI{{\bf I}}
\def\matrixL{{\bf L}}
\def\matrixM{{\bf M}}
\def\matrixP{{\bf P}}
\def\matrixR{{\bf R}}
\def\matrixQ{{\bf Q}}
\def\matrixS{{\bf S}}
\def\matrixT{{\bf T}}
\def\vectorB{{\bf b}}
\def\vectorE{{\bf e}}
\def\vectorM{{\bf m}}
\def\vectorR{{\bf r}}
\def\vectorV{{\bf v}}
\def\vectorW{{\bf w}}
\def\vectorX{{\bf x}}
\def\vectorY{{\bf y}}
\def\0{\boldsymbol{0}}
\def\sign{{\rm sign}}
\def\courantspace{\boldsymbol{\mathcal{S}}}
\def\magnetizationset{\boldsymbol{\mathcal{M}}}
\def\tangentspace{\boldsymbol{\mathcal{K}}_h[\mhn]}
\def\tps#1{\boldsymbol{\mathcal{K}}_h[#1]}
\def\mhn{\mm_h^n}
\def\vh{\vv_h}
\def\weight{W}
\def\Cmh0{{\tilde{C}}_{{0}}}
\newcounter{statement}
\newenvironment{statement}[2][!]{%
\vskip3mm
\hrule
\hrule
\hrule
\vskip1mm
\noindent%
\refstepcounter{statement}%
\bf#2~\thestatement%
\ifthenelse{\equal{#1}{!}}{.\ }{~(#1).\ }%
\it%
}{%
\vskip1mm
\hrule
\hrule
\hrule
\vskip2mm
}
\newenvironment{theorem}[1][!]{\begin{statement}[#1]{Theorem}}{\end{statement}}
\newenvironment{lemma}[1][!]{\begin{statement}[#1]{Lemma}}{\end{statement}}
\newenvironment{proposition}[1][!]{\begin{statement}[#1]{Proposition}}{\end{statement}}
\newenvironment{corollary}[1][!]{\begin{statement}[#1]{Corollary}}{\end{statement}}
\newenvironment{remark}[1][!]{\begin{statement}[#1]{Remark}}{\end{statement}}
\newenvironment{algorithm}[1][!]{\begin{statement}[#1]{Algorithm}}{\end{statement}}
\def\subsection#1{%
 \vskip2mm
 \refstepcounter{subsection}%
 {\bf\arabic{section}.\arabic{subsection}.~#1.~}%
}
\author{Johannes~Kraus}
\author{Carl-Martin~Pfeiler}
\author{Dirk~Praetorius}
\author{Michele~Ruggeri}
\author{Bernhard~Stiftner}
\address{Faculty of Mathematics, University of Duisburg-Essen, Thea-Leymann-Strasse 9, 45127 Essen, Germany}
\email{johannes.kraus@uni-due.de}
\address{Institute for Analysis and Scientific Computing, TU Wien, Wiedner Hauptstrasse 8-10, 1040 Vienna, Austria}
\email{carl-martin.pfeiler@asc.tuwien.ac.at}
\email{dirk.praetorius@asc.tuwien.ac.at}
\email{bernhard.stiftner@asc.tuwien.ac.at}
\address{Faculty of mathematics, University of Vienna, Oskar-Morgenstern-Platz 1, 1090 Vienna, Austria}
\email{michele.ruggeri@univie.ac.at}
\thanks{\emph{Acknowledgements.}
The authors acknowledge support from the Vienna Science and Technology Fund WWTF (grant MA14-44) and the Austrian Science Fund FWF (grants DK W1245 and SFB F65).}
\keywords{micromagnetics, finite elements, tangent plane scheme, preconditioning}
\subjclass[2010]{35K35, 65M60, 65F08, 65F10, 65M22, 65Z05}
\title{\vspace*{-15mm}Iterative solution and preconditioning \\ for the tangent plane scheme \\ in computational micromagnetics}
\begin{document}
\begin{abstract}
\vspace*{-1mm}
The tangent plane scheme is a time-marching scheme for the numerical solution of the nonlinear parabolic Landau--Lifshitz--Gilbert equation (LLG), which describes the time evolution of ferromagnetic configurations. Exploiting the geometric structure of LLG, the tangent plane scheme requires only the solution of one linear variational form per time-step, which is posed in the discrete tangent space determined by the nodal values of the current magnetization. We develop an effective solution strategy for the arising constrained linear systems, which is based on appropriate Householder reflections. We derive possible preconditioners, which are (essentially) independent of the time-step, and prove that the preconditioned GMRES algorithm leads to linear convergence. Numerical experiments underpin the theoretical findings.
\end{abstract}
\maketitle
\thispagestyle{fancy}
\vspace*{-8mm}

\section{Introduction}

\subsection{Landau--Lifshitz--Gilbert equation}
The Landau--Lifshitz--Gilbert equation (LLG) describes time-dependent micromagnetic phenomena in a ferromagnetic domain $\Omega \subset \R^3$ being bounded and Lipschitz with boundary $\partial \Omega$~\cite{Gilbert55,LL08}.
It reads
\begin{subequations}\label{eq:LLG}
\begin{align}
\partial_t \mm &= - \mm \times \heff(\mm) + \alpha\, \mm \times \partial_t \mm && \textrm{in } (0,T) \times \Omega, \label{eq:LLG1}\\
\partial_{\nn} \mm & = \0 && \textrm{on } (0,T) \times \partial \Omega, \label{eq:LLG2}\\
\mm(0) & = \mm^0 && \textrm{in } \Omega,
\end{align}
\end{subequations}
where the unknown $\mm: (0,T) \times \Omega \rightarrow \R^3$ is the magnetization, $\heff(\mm)$ is the $\mm$-dependent effective field, $\alpha \in (0,1]$ is the Gilbert damping constant, $T>0$ is the final time, and $\mm^0 \in \HH^1(\Omega) := (H^1(\Omega))^3$ with $|\mm^0| = 1$ a.e.\ in $\Omega$ is the initial configuration.
The effective field comprises several contributions, which correspond to different phenomena in micromagnetism.
In usual applications (cf., e.g.,~\cite{HS98}), one has $\heff(\mm) := \ellex^2 \Delta \mm  + \ppi(\mm) + \ff$, where
$\ellex^2 \Delta \mm$ is the exchange contribution with the exchange length $\ellex > 0$,
$\ppi: \HH^1(\Omega) \rightarrow \LL^2(\Omega) :=  (L^2(\Omega))^3$ is a bounded operator, which collects all $\mm$-dependent lower-order terms such as the stray field or the magnetocrystalline anisotropy contribution,
$\ff \in C^1([0,T],\LL^2(\Omega))$ is the applied external field.

Taking the scalar product with $\mm$ in~\eqref{eq:LLG1}, we note the PDE inherent constraints
\begin{align}\label{eq:orthogonal}
\frac12 \, \partial_t |\mm|^2 = \mm \cdot \partial_t \mm = 0 \quad \textrm{and thus} \quad
|\mm|=1 \quad \textrm{a.e. in } (0,T) \times \Omega.
\end{align}
In particular, $\partial_t \mm(t)$ belongs to the tangent space of $\mm(t)$ for almost all $t \in (0,T)$. Using vector identities and~\eqref{eq:orthogonal}, one can prove that~\eqref{eq:LLG1} is equivalent to
\begin{align}\label{eq:alternativeform}
\alpha\, \partial_t \mm + \mm \times \partial_t \mm = \heff(\mm) - (\heff(\mm) \cdot \mm) \, \mm,
\end{align}
which is a linear equation in $\vv:= \partial_t \mm$. 

\subsection{Tangent plane scheme (TPS)}
The idea of TPS~\cite{AJ06} can roughly be described as follows: At time $t_n$, the magnetization $\mm(t_n)$ is discretized by some lowest-order finite element approximation $\mm_h^n$ in space. Discretizing~\eqref{eq:alternativeform} by a Galerkin approach in the discrete tangent space at $\mm_h^n$, we obtain an approximation $\vv_h^n \approx \vv(t_n)$. Up to nodal normalization, $\mm_h^n + k \vv_h^n$ then yields an approximation of the magnetization at time $t_{n+1} := t_n + k$. Although LLG is nonlinear, TPS thus solves only one linear system per time-step for $\vv_h^n$, yet, in the discrete tangent space.

\subsection{State of the art}
TPS with explicit time-stepping was first analyzed in~\cite{AJ06} with a refined analysis in~\cite{BKP08}, which requires a CFL condition for convergence towards a weak solution in the sense of~\cite{AS92}. The work~\cite{Alouges08} proposed TPS with an implicit time-stepping. This yields unconditional convergence of the algorithm towards a weak solution. While the algorithm of~\cite{Alouges08} is formulated for the exchange field only, it was extended to general stationary lower-order contributions in~\cite{AKT12,BSF+14} and chiral magnetic skyrmion dynamics in~\cite{HPP+18}.
Moreover, TPS was extended to the coupling of LLG with other evolution equations such as the full Maxwell system~\cite{BPP15}, the eddy current equation~\cite{LT13,LPP+15,FT17}, the conservation of elastic momentum~\cite{BPP+14} to model magnetostrictive effects, or a spin diffusion equation~\cite{AHP+14,ARB+15}. In the mentioned works, TPS is formally of first order in time. Recently, TPS was modified into a (formally) second-order in time scheme in~\cite{AKS+14} with extensions in~\cite{DPP+17,HPP+18}. 

\subsection{Contributions}
So far, the efficient solution of the linear system in the discrete tangent space for the computation of $\vv_h^n \approx \vv(t_n):\Omega \rightarrow \R^3$ has not been discussed in the literature, yet. Here, the main difficulty is the time-dependent ansatz space resulting in a time-dependent system matrix. This also aggravates the construction of 
suitable and 
effective preconditioners, which, if possible, should not depend on the time-step, or, at least, only need an update every once in a while (after several time-steps).

We construct a linear system in $\R^{2N}$, where $N \in \N$ is the number of nodes of the underlying finite element discretization. The corresponding system matrix is positive definite, but non-symmetric and depends on the time-step.
We present and analyze various preconditioners, including a stationary approach (i.e., independent of the time-step) as well as Jacobi-type approximations. In the worst case, the number of necessary updates of the preconditioner to attain optimal convergence of the GMRES algorithm~\cite{SS86,Saad03} depends on the mesh-size $h$. However, under certain assumptions on the discrete magnetization $\mm_h^n \approx \mm(t_n)$, the number of necessary updates 
is also independent of $h$.

\subsection{Outline}
This paper is organized as follows: Section~\ref{section:preliminaries} introduces the basic notation and gives a precise formulation of 
TPS (Algorithm~\ref{alg:abtps}). In Section~\ref{section:tangentspace}, we provide a basis for the discrete tangent space and derive the prototype linear system, which has to be solved in each time-step (Theorem~\ref{theorem:howtosolve}). Section~\ref{section:precond} proposes symmetric and positive definite preconditioners for the latter linear system. The two main results (Theorem~\ref{theorem:theoretical_convergence} and Theorem~\ref{theorem:practical_convergence}) prove that the corresponding GMRES algorithms converge linearly. These results also provide estimates of the corresponding residual reduction factors (in certain energy norms) and show under which assumptions these estimates are independent of the discretization parameters. A corresponding linear convergence result for a stationary preconditioning approach (Corollary~\ref{corollary:stationaryprecond}) is a by-product of Theorem~\ref{theorem:theoretical_convergence}. Finally, we also discuss Jacobi-type approximations of our preconditioners (Section~\ref{subsection:jacobi}). Our theoretical results are underpinned by numerical experiments in Section~\ref{section:numerics}. The proofs of Theorem~\ref{theorem:howtosolve}, Theorem~\ref{theorem:theoretical_convergence}, and Theorem~\ref{theorem:practical_convergence} are postponed to Section~\ref{section:proofs}.

\section{Preliminaries}\label{section:preliminaries}

\subsection{General notation}
For any dimension $d \in \N$ (clear from the context) and vectors $\vectorX,\vectorY\in\R^d$, let $\vectorX \cdot \vectorY$ denote the Euclidean scalar product with the corresponding norm $|\vectorX|^2:=\vectorX\cdot\vectorX$. The induced matrix norm reads $\| \matrixA \| := \sup_{x \in \R^{d} \setminus \{ \0 \}} |\matrixA \vectorX| / |\vectorX|$. Moreover, we denote by $\vectorE_i$ the $i$-th unit vector and by $\matrixI$ the identity matrix in $\R^d$. To abbreviate notation, we follow the {\tt Matlab} syntax: For vectors, $\vectorX_1, \dots , \vectorX_n \in \R^{d}$, we write $[ \vectorX_1 , \dots, \vectorX_n ] \in \R^{d \times n}$ for the matrix whose $j$-th column is $\vectorX_j$. We use bold letters for vector-valued spaces, e.g., $\LL^2(\Omega)=(L^2(\Omega))^3$. By slight abuse of notation, we write $\norm{\cdot}{\LL^{2}(\Omega)}$ simultaneously for the $L^2$-norm on $(L^2(\Omega))^3$ and $(L^2(\Omega))^{3 \times 3}$.
Similarly, we write $\dual{\cdot}{\cdot}{\Omega}$ for all $L^2$-scalar products including vector-valued spaces. We write $\partial_k$ for the derivative with respect to $\xx_k$, where $k \in \{1,2,3\}$. We write $C$ for generic constants (clear from the context and, in particular, independent of the discretization parameters). For $a,b \in \R^+$ with $a \leq C b$, we write $a \lesssim b$. If $a \lesssim b$ and $b \lesssim a$, we write $a \simeq b$.

\subsection{Discretization}
For the temporal discretization of LLG, let $M \in \N$ and $k := T/M$.
Let $t_n := kn$ with $n \in \{0,\dots,M\}$ be the uniform time-steps and let $t_{n+1/2} := (t_{n+1} + t_n) / 2$.
For the spatial discretization, let $\TT_h$ be a $\Cmesh$-quasi-uniform triangulation of $\Omega$ into tetrahedra $K \in \TT_h$ with mesh-size $h>0$, i.e., there exists $\Cmesh>0$ such that
\begin{align*}
h \, \leq  \, |K|^{1 / 3}  \, \leq  \, \operatorname{diam}(K) \leq \Cmesh \, h \quad \textrm{for all } K \in \TT_h,
\end{align*}
where $|K|$ denotes the volume of the element $K$, while $\operatorname{diam}(K)$ is its diameter.
Let
\begin{align}
 \courantspace_h := (\SS_h)^3
 \quad \textrm{with} \quad
 \SS_h:=\set{v_h\in C(\Omega)}{\textrm{for all } T\in\TT_h\quad v_h|_T\text{ is affine}}
\end{align}
be the lowest-order FEM space. We denote by $\NN_h$ the set of nodes of $\TT_h$ and define $N := \# \NN_h$. For all $d \in \N$, a scaling argument (see, e.g.,~\cite[Lemma~3.9]{Bartels2015}) yields that 
\begin{align}\label{eq:scalingargument}
\norm{\vvarphi_h}{\LL^2(\Omega)}^2 \leq  h^3 \, \sum_{i=1}^N | \vvarphi_h(z_i) |^2  \leq (d+2) \, \norm{\vvarphi_h}{\LL^2(\Omega)}^2
\quad \textrm{for all } \vvarphi_h \in (\SS_h)^d.
\end{align}
Moreover, let
\begin{align}
\magnetizationset_h := \set{\vvarphi_h\in\courantspace_h}{|\vvarphi_h(z)|=1 \quad \textrm{for all } z\in\NN_h}. 
\end{align}
For some fixed $\mmu_h\in\magnetizationset_h$, define the discrete tangent space
\begin{align}\label{eq:Khn}
 \tps{\mmu_h} := \set{\vvarphi_h\in\courantspace_h}{\vvarphi_h(z)\cdot\mmu_h(z)=0\quad \textrm{for all } z\in\NN_h}.
\end{align}
Note that $\dim \courantspace_h = 3N$ and $\dim \tps{\mmu_h} = 2N$.

\subsection{Tangent plane scheme (TPS)}
We unify the formulation of the first-order TPS~\cite{Alouges08,AKT12,BSF+14} and the second-order TPS~\cite{AKS+14,DPP+17}. To this end, let $\weight_k: \R \rightarrow \R_{>0}$ and $\beta: \R_{>0} \rightarrow \R_{>0}$ satisfy
\begin{align}\label{eq:stabilizations}
\lim\limits_{k \rightarrow 0} \norm{ \weight_k- \alpha }{L^{\infty}(\R)} = 0,
\quad 
\inf\limits_{k > 0} \inf\limits_{s \in \R} W_k(s) \geq \frac{\alpha}{2} > 0,
\quad \textrm{and} \quad
\lim_{k \rightarrow 0} \beta(k) k = 0.
\end{align}
Then, the general algorithm takes the following form:

\begin{algorithm}[General TPS]\label{alg:abtps}
{\bf Input:} $\mm_h^{-1} := \mm_h^0\in\magnetizationset_h$. \\
{\bf Loop:}  For $0 \leq n < M$, iterate the following steps {\rm (a)}--{\rm (c)}: \\
{\rm(a)} Compute 
\begin{subequations}
\begin{align}\label{eq:weight_input}
\lambda_h^{n} := - \ellex^2 \, |\nabla \mm_h^n |^2 + \big( \ff(t_n) + \ppi(\mm_h^n) \big) \cdot \mm_h^n.
\end{align}
{\rm(b)} Find $\vv_h^n \in \tangentspace$ such that
\begin{align}\label{eq:tps2_variational}
\begin{split}
&\dual{\weight_k(\lambda_h^n) \, \vv_h^n}{\vvarphi_h}{\Omega} + \dual{\mm_h^n\times\vv_h^n}{\vvarphi_h}{\Omega} + 
\beta(k) k \, \dual{\nabla\vv_h^n}{\nabla\vvarphi_h}{\Omega} 
\\
& \qquad = - \ellex^2 \dual{\nabla \mm_h^n}{\nabla \vvarphi_h}{\Omega} + 
\dual{\boldsymbol{L}_h(\mm_h^n,\mm_h^{n-1})}{\vvarphi_h}{\Omega}
\quad \textrm{for all } \vvarphi_h \in \tangentspace;
\end{split}
\end{align} see Remark~\ref{remark:aboudtps2}~{\rm (iii)} for details on $\weight_k$, $\beta$, and $\boldsymbol{L}_h(\mm_h^n,\mm_h^{n-1}) \in \HH^1(\Omega)$. \\
{\rm(c)}  Define $\mm_h^{n+1} \in \magnetizationset_h$ by
\begin{align}\label{eq:makestep}
\mm_h^{n+1}(z) := \frac{\mm_h^{n}(z) + k \vv_h^{n}(z)}{|\mm_h^{n}(z) + k \vv_h^{n}(z)|} \quad \textrm{for all nodes } z\in\NN_h.
\end{align}
\end{subequations}
{\bf Output:} Approximations $\mm_h^n\approx\mm(t_n)$. \qed
\end{algorithm}

\begin{remark}\label{remark:aboudtps2}
{\rm (i)} With $\weight_k(\cdot) \geq \alpha/2 > 0$ from~\eqref{eq:stabilizations}, the bilinear form on the left-hand side of~\eqref{eq:tps2_variational} is continuous and elliptic on $\HH^1(\Omega)$. Therefore, the Lax--Milgram theorem guarantees existence and uniqueness of the solution $\vv_h^n \in \tangentspace$ to~\eqref{eq:tps2_variational}.

{\rm (ii)} To see that~\eqref{eq:makestep} is well-defined, note that $\mm_h^0 \in \magnetizationset_h$ and induction on $n$ prove that
\begin{align*}
|\mm_h^n(z) + k \vv_h^n(z) |^2 \stackrel{\eqref{eq:Khn}}{=} |\mm_h^n(z)|^2 + k^2 |\vv_h^n(z)|^2 \geq 1 \quad \textrm{for all nodes } z \in \NN_h .
\end{align*}

{\rm (iii)} The first-order (TPS1) and second-order (TPS2) tangent plane schemes differ solely in $\weight_k$, $\beta$ and $\boldsymbol{L}_h(\mm_h^n,\mm_h^{n-1})$ in the definition of~\eqref{eq:tps2_variational}. For TPS1 from~\cite{Alouges08,AKT12,BSF+14}, we have
\begin{subequations}\label{eq:TPS1}
\begin{align}
\weight_k & \, \equiv \alpha, \\
\beta(k) & \, \equiv \ellex^2 \, \Theta \quad \textrm{with} \quad \Theta \in (0,1], \label{eq:TPS1_beta}\\
\boldsymbol{L}_h(\mm_h^n,\mm_h^{n-1}) & := \ppi(\mm_h^n) + \ff(t_n),
\end{align} 
\end{subequations}
and step {\rm (a)} in Algorithm~\ref{alg:abtps} is omitted. For TPS2 from~\cite{AKS+14,DPP+17}, let
\begin{subequations}\label{eq:TPS2}
\begin{align}
\rho(k) := | k \log(k) | \quad \textrm{and} \quad M(k) := | k \log(k) |^{-1};
\end{align}
see, e.g.,~\cite[eq.(11c)]{DPP+17} for the precise requirements. Then, we have
\begin{align}
\weight_k(s) &:= 
\begin{cases}
\displaystyle\alpha + \frac{k}{2} \, \min\{s,M(k)\} \quad&\text{for } s\ge 0 \,,\\
\displaystyle\frac{\alpha}{1+\frac{k}{2\alpha}\min\{-s,M(k)\}} \quad&\text{for } s<0 ,
\end{cases} \\
\beta(k) & := \frac{\ellex^2}{2} \, (1 + \rho(k)), \\
\boldsymbol{L}_h(\mm_h^n,\mm_h^{n-1}) &:= \frac32 \, \ppi(\mm_h^n) - \frac12 \, \ppi(\mm_h^{n-1}) + \ff\big( \, t_n + \frac{k}{2} \, \big).
\end{align}
\end{subequations}
Note that, for sufficiently small $k>0$, the latter choices of $\weight_k$ and $\rho$ satisfy the assumptions from~\eqref{eq:stabilizations}; see~\cite{AKS+14,DPP+17} for details.

{\rm (iv)} For TPS1 from {\rm(iii)}, the projection step~{\rm (c)} of Algorithm~\ref{alg:abtps} can be omitted. The integrator then remains unconditionally convergent~\cite{AHP+14}. If there exists a smooth (and hence unique~\cite{DS14}) strong solution of LLG, the a priori analysis in~\cite{FT17} guarantees first-order convergence in space and time.
\end{remark}

\subsection{Linear algebra}\label{section:linalg}
We suppose a numbering of the nodes, i.e., $\NN_h=\{z_1,\dots,z_N\}$. Let $\varphi_j\in\SS_h$ be the nodal hat function associated with $z_j$, i.e., $\varphi_j(z_k) = \delta_{jk}$, where $\delta_{jk}$ is Kronecker's delta. We then consider the following basis of $\courantspace_h$: Define
\begin{align}\label{eq:basis3D}
 \pphi_{3(j-1)+\ell} := \varphi_j\,\boldsymbol{e}_\ell
 \quad\text{for all }j=1,\dots,N \textrm{ and all } \ell=1,2,3.
\end{align}
Given $\mm_h^n \in \magnetizationset_h$, we then define $\matrixM,\matrixM_{k}[\mm_h^n],\matrixL,\matrixS[\mm_h^n]\in\R^{3N\times3N}$ as follows:
\begin{itemize}
\item $\matrixM_{ij} := \dual{\pphi_{i}}{\pphi_{j}}{\Omega}$ is the (symmetric and positive definite) mass matrix;
\item $\big(\matrixM_{k}[\mm_h^n]\big)_{ij} := \dual{(\weight_k(\lambda_h^n)/\alpha) \, \pphi_{i}}{\pphi_{j}}{\Omega}$ is the (symmetric and positive definite) weighted mass matrix, where $\lambda_h^n$ stems from~\eqref{eq:weight_input} and thus depends on $\mm_h^n$;
\item $\matrixL_{ij} := \dual{\nabla\pphi_i}{\nabla\pphi_j}{\Omega}$ is the (symmetric and positive semidefinite) stiffness matrix;
\item $(\matrixS[\mm_h^n])_{ij} := \dual{\mhn\times\pphi_i}{\pphi_j}{\Omega}$ is the (skew-symmetric) cross product matrix.
\end{itemize}
Moreover, we set
\begin{subequations}\label{eq:blockfrom}
\begin{align}\label{eq:defai}
M_{ij} := \dual{\varphi_i}{\varphi_j}{\Omega} \in \R
\quad \textrm{and} \quad
L_{ij} := \dual{\nabla \varphi_i}{\nabla \varphi_j}{\Omega} \in \R \quad 
\textrm{for } i,j=1,\ldots,N,
\end{align}
and note the block forms
\begin{align}
\matrixM
\, = \, 
\begin{pmatrix} M_{11} \, \matrixI_{3 \times 3} & \cdots & M_{1N} \, \matrixI_{3 \times 3} \\
 \vdots&\ddots& \vdots\\
 M_{N1} \, \matrixI_{3 \times 3} & \cdots  & M_{NN} \, \matrixI_{3 \times 3}
 \end{pmatrix}
 \quad \textrm{and} \quad
 \matrixL = 
 \begin{pmatrix} L_{11} \, \matrixI_{3 \times 3} & \cdots & L_{1N} \, \matrixI_{3 \times 3} \\
 \vdots&\ddots& \vdots\\
 L_{N1} \, \matrixI_{3 \times 3} & \cdots  & L_{NN} \, \matrixI_{3 \times 3}
 \end{pmatrix}.
\end{align}
\end{subequations}

If we replace $\tangentspace$ with $\courantspace_h$ in~\eqref{eq:tps2_variational}, the left-hand side of~\eqref{eq:tps2_variational} gives rise to the matrix
\begin{align}\label{eq:systemmatrix}
\matrixA_{k}[\mm_h^n] := \alpha \, \matrixM_{k}[\mm_h^n] + \beta(k) k \, \matrixL - \matrixS[\mm_h^n] \in \R^{3N \times 3N}. 
\end{align}
The right-hand side of~\eqref{eq:tps2_variational} gives rise to the vector $\vectorB[\mm_h^n]\in\R^{3N}$ with
\begin{align}\label{eq:righthandside}
\big( \vectorB [\mm_h^n] \big)_j & := 
- \ellex^2 \dual{\nabla \mm_h^n}{\nabla \pphi_j}{\Omega} + \dual{\boldsymbol{L}_h(\mm_h^n,\mm_h^{n-1})}{\vvarphi_h}{\Omega}.
\end{align}
Note that the matrix $\matrixA_{k}[\mm_h^n]$ is positive definite and hence regular, but not symmetric. Finally, define the 2D-equivalent to the basis from~\eqref{eq:basis3D} by 
\begin{align}\label{eq:basis2D}
 \ppsi_{2(j-1)+\ell} := \varphi_j\,\boldsymbol{e}_\ell
 \quad\text{for all }j=1,\dots,N \textrm{ and all } \ell=1,2.
\end{align}

\section{The tangent space problem}\label{section:tangentspace} 

In this section, we present a strategy, which translates the solution of the discrete variational formulation~\eqref{eq:tps2_variational} to a linear system in $\R^{2N} \cong \tangentspace$. To that end, we use Householder matrices: Given $\vectorM \in \R^3$ with $|\vectorM| = 1$, define $\widetilde{\matrixH}[\vectorM] \in \R^{3 \times 3}$ by
\begin{subequations}\label{eq:householder}
\begin{align}\label{eq:tildeH}
\widetilde{\matrixH}[\vectorM]
:=
\begin{cases}
 \matrixI - 2 \vectorW \vectorW^T, \quad \textrm{where} \quad
\vectorW := 
\frac{\vectorM + \vectorE_3}{|\vectorM + \vectorE_3|}
& \textrm{for } \vectorM \neq -\vectorE_3, \\
\big[ \, \vectorE_1, \vectorE_2, - \vectorE_3 \, \big]
& \textrm{for } \vectorM = -\vectorE_3.
\end{cases}
\end{align}
Then, $\widetilde{\matrixH}[\vectorM]$ is orthonormal with $\widetilde{\matrixH}[\vectorM] = {\widetilde{\matrixH}[\vectorM]}^T = {\widetilde{\matrixH}[\vectorM]}^{-1}$ and maps $\vectorE_3$ to $-\vectorM$. Define
\begin{align}\label{eq:standard_choice}
\matrixH[\vectorM] := \big[ \, \widetilde{\matrixH}[\vectorM] \vectorE_1,  \widetilde{\matrixH}[\vectorM] \vectorE_2 \, \big] \in \R^{3 \times 2},
\end{align}
i.e., $\operatorname{span}(\matrixH[\vectorM]) \bot \vectorM$ and $\operatorname{span}(\matrixH[\vectorM])$ mimics $\tangentspace$ nodewise. Moreover, for any orthogonal matrix $\matrixT_n \in \R^{3 \times 3}$ with $\matrixT_n = \matrixT_n^{-1} = \matrixT_n^T$, the matrix
\begin{align}\label{eq:thereplacement}
\matrixT_n \, \matrixH[\matrixT_n \vectorM] \in \R^{3 \times 2} \quad \textrm{instead of} \quad \matrixH[\vectorM] \in \R^{3 \times 2},
\end{align}
\end{subequations}
also satisfies $\operatorname{span}(\matrixT_n\matrixH[\matrixT_n\vectorM]) \bot \vectorM$. Hence, $\operatorname{span}(\matrixT_n\matrixH[\matrixT_n\vectorM])$ still mimics $\tangentspace$ nodewise. The following theorem provides a linear system in $\R^{2N} \cong \tangentspace$ for the solution to~\eqref{eq:tps2_variational}. The proof is postponed to Section~\ref{subsection:proof_howtosolve} below.

\begin{theorem}\label{theorem:howtosolve}
Recall $\matrixA_{k}[\mm_h^n]\in\R^{3N\times3N}$ and $\vectorB[\mm_h^n]\in\R^{3N}$ from Section~\ref{section:linalg}. Define 
\begin{align}\label{eq:defQ}
\matrixQ[\mm_h^n] := 
 \begin{pmatrix} \matrixT_n \matrixH [\matrixT_n \mm_h^n(z_1)] &\0&\cdots&\0\\
 \0& \matrixT_n \matrixH [\matrixT_n \mm_h^n(z_2)] &\ddots&\vdots\\
 \vdots&\ddots&\ddots&\0\\
 \0&\cdots&\0& \matrixT_n \matrixH [\matrixT_n \mm_h^n(z_N)]
 \end{pmatrix}\in\R^{3N\times2N},
\end{align}
where $\matrixH(\cdot)$ stems from~\eqref{eq:householder}. Then, the matrix $\matrixQ[\mm_h^n]^T\matrixA_{k}[\mm_h^n]\matrixQ[\mm_h^n] \in \R^{2N \times 2N}$ is positive definite and, in particular, regular. Moreover, the unique solution $\vectorX \in \R^{2N}$ of
\begin{align}\label{eq:linearsystem:constrainedQ}
 \big(\, \matrixQ[\mm_h^n]^T\matrixA_{k}[\mm_h^n]\matrixQ[\mm_h^n] \, \big) \, 
 \vectorX = 
 \matrixQ[\mm_h^n]^T \, \vectorB[\mm_h^n],
\end{align}
and the unique solution $\vh^n\in\tangentspace$ of the variational formulation~\eqref{eq:tps2_variational} satisfy
\begin{align}\label{eq:variationalform:vh}
 \vh^n = \sum_{j=1}^{3N} \vectorV_j \pphi_j
 \quad\text{with}\quad
 \vectorV:=\matrixQ[\mm_h^n] \, \vectorX\in\R^{3N}.
\end{align}
\end{theorem}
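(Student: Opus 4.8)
The plan is to translate the variational statement~\eqref{eq:tps2_variational} into matrix form on all of $\courantspace_h$ first, then restrict to the tangent space by means of the block matrix $\matrixQ[\mm_h^n]$. First I would observe that if we dropped the constraint and replaced $\tangentspace$ by $\courantspace_h$ in~\eqref{eq:tps2_variational}, then writing $\vh^n=\sum_{j}\vectorV_j\pphi_j$ and testing against $\pphi_i$, the bilinear form on the left-hand side produces exactly $\matrixA_{k}[\mm_h^n]\vectorV$ (this is the content of~\eqref{eq:systemmatrix}: the $\weight_k(\lambda_h^n)/\alpha$-weighted mass term contributes $\alpha\matrixM_k[\mm_h^n]$, the stabilization term contributes $\beta(k)k\,\matrixL$, and the cross-product term contributes $-\matrixS[\mm_h^n]$), while the right-hand side produces $\vectorB[\mm_h^n]$.

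Next I would encode the nodal orthogonality constraint~\eqref{eq:Khn} through $\matrixQ[\mm_h^n]$. The key algebraic fact is that, by~\eqref{eq:householder} and~\eqref{eq:thereplacement}, each diagonal block $\matrixT_n\matrixH[\matrixT_n\mm_h^n(z_j)]\in\R^{3\times2}$ has orthonormal columns both orthogonal to $\mm_h^n(z_j)$, so its column span is precisely the nodal tangent plane $\{\,\vectorX\in\R^3:\vectorX\cdot\mm_h^n(z_j)=0\,\}$. Consequently $\vvarphi_h\in\tangentspace$ if and only if its coefficient vector in the basis~\eqref{eq:basis3D} lies in $\range\matrixQ[\mm_h^n]$; equivalently the map $\vectorX\mapsto\sum_j(\matrixQ[\mm_h^n]\vectorX)_j\pphi_j$ is a linear isomorphism from $\R^{2N}$ onto $\tangentspace$ (injectivity follows from the full column rank of $\matrixQ[\mm_h^n]$, which in turn follows from each block having orthonormal, hence linearly independent, columns). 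I would also record that the columns of $\matrixQ[\mm_h^n]$ themselves span a subspace of $\courantspace_h$ realizing $\tangentspace$ nodewise.

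With these two pieces in place, the theorem follows by a standard Galerkin-restriction argument: parametrize $\vvarphi_h\in\tangentspace$ as $\vvarphi_h=\sum_j(\matrixQ[\mm_h^n]\vectorY)_j\pphi_j$ for $\vectorY\in\R^{2N}$, and make the ansatz $\vh^n=\sum_j(\matrixQ[\mm_h^n]\vectorX)_j\pphi_j$. Then~\eqref{eq:tps2_variational} reads $(\matrixA_{k}[\mm_h^n]\matrixQ[\mm_h^n]\vectorX)\cdot(\matrixQ[\mm_h^n]\vectorY)=\vectorB[\mm_h^n]\cdot(\matrixQ[\mm_h^n]\vectorY)$ for all $\vectorY\in\R^{2N}$, i.e.\ $\matrixQ[\mm_h^n]^T\matrixA_{k}[\mm_h^n]\matrixQ[\mm_h^n]\vectorX=\matrixQ[\mm_h^n]^T\vectorB[\mm_h^n]$, which is~\eqref{eq:linearsystem:constrainedQ}. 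For the positive definiteness claim, I would use that $\matrixA_{k}[\mm_h^n]$ is positive definite (its symmetric part is $\alpha\matrixM_k[\mm_h^n]+\beta(k)k\,\matrixL$, which is positive definite since $\matrixM_k[\mm_h^n]$ is, using $\weight_k\geq\alpha/2$ from~\eqref{eq:stabilizations}, while $-\matrixS[\mm_h^n]$ is skew-symmetric): for $\vectorX\neq\0$ we have $\matrixQ[\mm_h^n]\vectorX\neq\0$ by full column rank, hence $\vectorX^T\matrixQ[\mm_h^n]^T\matrixA_{k}[\mm_h^n]\matrixQ[\mm_h^n]\vectorX=(\matrixQ[\mm_h^n]\vectorX)^T\matrixA_{k}[\mm_h^n](\matrixQ[\mm_h^n]\vectorX)>0$; regularity is then immediate, so~\eqref{eq:linearsystem:constrainedQ} has a unique solution, and~\eqref{eq:variationalform:vh} holds by construction.

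The main obstacle is the bookkeeping in the first step: verifying carefully that each diagonal block $\matrixT_n\matrixH[\matrixT_n\mm_h^n(z_j)]$ really does have orthonormal columns spanning the nodal tangent plane — which requires checking the two cases in~\eqref{eq:tildeH}, that $\widetilde{\matrixH}[\vectorM]$ is an orthogonal involution sending $\vectorE_3$ to $-\vectorM$, and that conjugation by $\matrixT_n$ preserves these properties — together with the translation of the abstract $L^2$-bilinear form into the matrices $\matrixM_k[\mm_h^n]$, $\matrixL$, $\matrixS[\mm_h^n]$ in the correct $3N$-dimensional block ordering~\eqref{eq:basis3D}. The Galerkin-restriction step itself is routine once the isomorphism $\R^{2N}\cong\tangentspace$ via $\matrixQ[\mm_h^n]$ is established.
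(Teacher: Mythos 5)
Your proposal is correct and takes essentially the same approach as the paper: both rely on the fact that $\matrixQ[\mm_h^n]$ has orthonormal columns spanning the nodal tangent planes, so that $\vectorX\mapsto\sum_j(\matrixQ[\mm_h^n]\vectorX)_j\pphi_j$ is an isomorphism $\R^{2N}\to\tangentspace$ (the paper packages this as $\widetilde{\mathbb{P}}_h[\mm_h^n]$ in Lemma~\ref{lemma:representations}~(v)), and both then identify the restricted Galerkin system with~\eqref{eq:linearsystem:constrainedQ} and deduce positive definiteness of $\matrixQ^T\matrixA_k\matrixQ$ from the positive definiteness of $\matrixA_k[\mm_h^n]$ together with the full column rank of $\matrixQ$.
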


\begin{remark}\label{remark:howtosolve}
For the validity of Theorem~\ref{theorem:howtosolve}, it is only relevant that $\matrixH[\mm_h^n(z_i)]$ has orthonormal columns and that $\operatorname{span}(\matrixH[\mm_h^n(z_i)]) \bot \mm_h^n(z_i)$ for all $i=1,\dots,N$. Given $\vectorM \in \R^3$, with $|\mm|=1$, alternative strategies from~\cite[Lemma 6.1.2]{Ruggeri16} are, e.g.,
\begin{itemize}
\item either to set $\vectorW := 
\frac{\vectorM + \sigma \vectorE_3}{|\vectorM + \sigma \vectorE_3|}$, where $\sigma = \sign(\vectorM_3)$;
\item or to use the transformation matrix of the rotation around the axis $\vectorE_3\times\vectorM$ by an angle $\varphi$ such that $\cos\varphi = \vectorM\cdot\vectorE_3$ and $\sin\varphi = \lvert\vectorM\times\vectorE_3\rvert$.
\end{itemize}
Moreover, even different strategies for $\matrixH[\mm_h^n(z_i)]$ at different nodes $z_i$ are possible.
\end{remark}

\section{Preconditioning} \label{section:precond}

To solve the tangent space system~\eqref{eq:linearsystem:constrainedQ}, we aim to choose a preconditioner $\matrixP \in \R^{2N \times 2N}$ and employ the GMRES algorithm~\cite{SS86,Saad03} to the preconditioned system
\begin{align}\label{eq:preconditionedsystem}
\matrixP \matrixA_{\matrixQ}[\mm_h^n] \vectorX := \matrixP \matrixQ[\mm_h^n]^T \matrixA_{k}[\mm_h^n] \matrixQ[\mm_h^n] \vectorX = \matrixP \matrixQ[\mm_h^n]^T \vectorB[\mm_h^n] =: \matrixP \vectorB_{\matrixQ}[\mm_h^n] .
\end{align}
In the following sections, we discuss possible choices for $\matrixP$. We rely on the symmetric part of $\matrixA_{\matrixQ}[\mm_h^n]$, where we replace $\weight_k$ by the parameter
\begin{align}\label{eq:alphaP}
\alpha_{\matrixP} > 0.
\end{align}
In particular, this includes the case $\alpha_{\matrixP} = \alpha$. Note that GMRES requires only the action of the preconditioner $\matrixP$ on a vector. Moreover, recall that $\matrixQ[\mm_h^n]$ from~\eqref{eq:defQ} implicitly depends on the arbitrary but fixed matrix $\matrixT_n \in \R^{3 \times 3}$ from~\eqref{eq:thereplacement}. We refer to Section~\ref{section:idealaxis} below for the possible construction of the matrix $\matrixT_n$, for given $\mm_h^n \in \magnetizationset_h$. 

\subsection{Theoretical preconditioner}\label{subsection:theoreticalprecond}
For $\mmu_h \in \magnetizationset_h$, we first consider
\begin{align}\label{eq:forma_constrained_preconditioner}
 \matrixP_{\matrixQ}[\mmu_h] := 
 \big( \, \matrixQ[\mmu_h]^T
 (\alpha_{\matrixP} \matrixM+\beta(k)k\,\matrixL)
 \matrixQ[\mmu_h] \, \big)^{-1}
 \in\R^{2N\times2N}.
\end{align}
To analyse the preconditioned GMRES algorithm, we define the energy scalar product
\begin{align}\label{eq:energyscalar}
\energydual{\vectorX}{\vectorY}{\mmu_h} & := \vectorX \cdot \big( \, \matrixP_{\matrixQ}[\mmu_h] \, \big)^{-1}\vectorY
\quad \textrm{for all } \vectorX, \vectorY \in \R^{2N}
\end{align}
and denote the induced norm by $|\!|\!|\cdot|\!|\!|_{\mmu_h}$. The following theorem shows that (the time-step dependent) $\matrixP_{\matrixQ}[\mm_h^n]$ is a suitable choice and that, in practice, one can keep and reuse $\matrixP_{\matrixQ}[\mm_h^n]$ for several time-steps. Its proof is postponed to Section~\ref{proof:theoretical_convergence}.

\begin{theorem}\label{theorem:theoretical_convergence}
Let $\alpha_{\matrixP} \geq \alpha$. Let $\mmu_h \in \magnetizationset_h$ be arbitrary. Consider the preconditioned GMRES algorithm with the preconditioner $\matrixP_{\matrixQ}[\mmu_h]$ from~\eqref{eq:forma_constrained_preconditioner} for the solution of~\eqref{eq:preconditionedsystem} with the initial guess $\vectorX^{(0)} \in \R^{2N}$. For $\ell \in \N_0$, let $\vectorX^{(\ell)} \in \R^{2N}$ denote the GMRES iterates with the corresponding residuals
\begin{align*}
\vectorR^{(\ell)} := \matrixP_{\matrixQ}[\mmu_h] \vectorB_{\matrixQ}[\mm_h^n] 
- \matrixP_{\matrixQ}[\mmu_h] \matrixA_{\matrixQ}[\mm_h^n] \, \vectorX^{(\ell)} \in \R^{2N}.
\end{align*}
Then, the following two assertions {\rm (i)}--{\rm (ii)} hold true:

{\rm (i)} There exists a constant $C>1$, which depends only on $\Cmesh$, such that
\begin{align}\label{eq:estimate:theoretical_1}
|\!|\!|\vectorR^{(\ell)}|\!|\!|_{\mmu_h} \, 
\leq \, 
\bigg[ \, 1 -  C^{-1} \Big( \, \frac{\alpha}{\alpha_{\matrixP}} \, \Big)^2 \, \bigg( \, 1 
+ \frac{1}{\alpha_{\matrixP}} 
+ \frac{ \norm{\weight_k - \alpha_{\matrixP}}{L^{\infty}(\Omega)} }{ \alpha_{\matrixP} } \, \bigg)^{-2} \mathbb{F}^{-4} \, \bigg]^{\ell/2} \,
|\!|\!|\vectorR^{(0)}|\!|\!|_{\mmu_h}
\end{align} 
for all $\ell \in \N_0$, where
\begin{align}\label{eq:factor:theoretical_1}
\mathbb{F} \, := \,
1 + 
\frac{\beta(k)k}{\alpha_{\matrixP} h^2}
\max_{z \in \NN_h} \matrixnorm{\matrixH[\matrixT_n\mm_h^n(z)] - \matrixH[\matrixT_n\mmu_h(z)]}^2  \,  \geq  \, 1.
\end{align}

{\rm (ii)} If, additionally, $1 + (\matrixT_n \mm_h^n(z))_3 \geq \gamma > 0$ and $1 + (\matrixT_n\mmu_h(z))_3 \geq \gamma > 0$ for all nodes $z \in \NN_h$, the statement of {\rm (i)} holds with the $h$-independent factor
\begin{align}
\begin{split}
\mathbb{F} \, & := \,
1 
+ 
\gamma^{-2} \, \norm{\mm_h^n - \mmu_h}{\LL^\infty(\Omega)}^2  
+  
\frac{\beta(k) k}{\alpha_{\matrixP} \gamma^{2}} \, \norm{\nabla \mm_h^n - \nabla \mmu_h}{\LL^\infty(\Omega)}^2 \\
& \qquad + 
\frac{\beta(k) k}{\alpha_{\matrixP} \gamma^{6}} \,
\big( \, \norm{\nabla \mm_h^n}{\LL^\infty(\Omega)}^2 + \norm{\nabla \mmu_h}{\LL^\infty(\Omega)}^2 \, \big) \,
\norm{\mm_h^n - \mmu_h}{\LL^{\infty}(\Omega)}^2 \,  \geq \, 1.
\end{split}
\end{align}
\end{theorem}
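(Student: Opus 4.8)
The plan is to bound the GMRES residual reduction via the classical Elman-type estimate, which says that after $\ell$ steps the preconditioned GMRES residual in the energy norm $|\!|\!|\cdot|\!|\!|_{\mmu_h}$ satisfies
\begin{align*}
\frac{|\!|\!|\vectorR^{(\ell)}|\!|\!|_{\mmu_h}}{|\!|\!|\vectorR^{(0)}|\!|\!|_{\mmu_h}}
\leq \bigg(1 - \frac{\lambda_{\min}(\mathcal{H})^2}{\|\matrixP_{\matrixQ}[\mmu_h]\matrixA_{\matrixQ}[\mm_h^n]\|_{\mmu_h}^2}\bigg)^{\ell/2},
\end{align*}
where $\mathcal{H}$ denotes the symmetric part of $\matrixP_{\matrixQ}[\mmu_h]\matrixA_{\matrixQ}[\mm_h^n]$ with respect to the energy inner product. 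Hence the whole proof reduces to two ingredients: (a) a lower bound for the smallest eigenvalue of the symmetrized preconditioned operator, and (b) an upper bound for its energy-norm. Since $\matrixP_{\matrixQ}[\mmu_h]$ is symmetric positive definite, working in the $\matrixP_{\matrixQ}[\mmu_h]^{-1}$-inner product is equivalent to studying the spectrum of $\matrixP_{\matrixQ}[\mmu_h]^{1/2}\matrixA_{\matrixQ}[\mm_h^n]\matrixP_{\matrixQ}[\mmu_h]^{1/2}$, i.e. we need to compare the form $\vectorY^T\matrixA_{\matrixQ}[\mm_h^n]\vectorY$ (and its symmetric part) against $\vectorY^T\matrixP_{\matrixQ}[\mmu_h]^{-1}\vectorY = \vectorY^T\matrixQ[\mmu_h]^T(\alpha_{\matrixP}\matrixM + \beta(k)k\matrixL)\matrixQ[\mmu_h]\vectorY$. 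Unfolding the definition of $\matrixA_k[\mm_h^n]$, the skew-symmetric cross-product term $\matrixS[\mm_h^n]$ drops out of the symmetric part, so the lower bound in (a) amounts to showing
\begin{align*}
\vectorY^T\matrixQ[\mm_h^n]^T(\alpha\matrixM_k[\mm_h^n] + \beta(k)k\matrixL)\matrixQ[\mm_h^n]\vectorY
\gtrsim c\,\vectorY^T\matrixQ[\mmu_h]^T(\alpha_{\matrixP}\matrixM + \beta(k)k\matrixL)\matrixQ[\mmu_h]\vectorY,
\end{align*}
with $c$ the asserted reduction factor (this is where $\alpha/\alpha_{\matrixP}$, the weight perturbation, and the mismatch $\mm_h^n$ versus $\mmu_h$ enter). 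The clean strategy is to first handle the case $\mmu_h = \mm_h^n$ (so $\matrixQ[\mmu_h] = \matrixQ[\mm_h^n]$), where the comparison reduces to comparing $\alpha\matrixM_k[\mm_h^n]$ with $\alpha_{\matrixP}\matrixM$ nodewise — controlled by $\alpha/\alpha_{\matrixP}$ and $\|\weight_k - \alpha_{\matrixP}\|_{L^\infty}/\alpha_{\matrixP}$ via the weight bounds $\weight_k \geq \alpha/2$ — and then to pass from $\matrixQ[\mm_h^n]$ to $\matrixQ[\mmu_h]$ by estimating the difference of the two block-diagonal Householder matrices.

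The key steps, in order, are as follows. First I would record the abstract GMRES bound and reduce everything to estimating $\lambda_{\min}$ of the symmetric part and the norm of the preconditioned operator, both with respect to $|\!|\!|\cdot|\!|\!|_{\mmu_h}$; equivalently, after conjugating by $\matrixP_{\matrixQ}[\mmu_h]^{1/2}$, to the two spectral inequalities above. Second, I would establish the nodewise comparison between the weighted mass matrix $\matrixM_k[\mm_h^n]$ and $\matrixM$, using $\alpha/2 \leq \weight_k \leq \alpha_{\matrixP} + \|\weight_k - \alpha_{\matrixP}\|_{L^\infty}$; combined with the scaling argument \eqref{eq:scalingargument} (which gives $\matrixnorm{\matrixM}\simeq h^3$ and spectral equivalence of $\matrixM$ with $h^3\,\matrixI$ on the nodal values), this yields the $\bigl(1 + 1/\alpha_{\matrixP} + \|\weight_k - \alpha_{\matrixP}\|_{L^\infty}/\alpha_{\matrixP}\bigr)$ factor and the $\alpha/\alpha_{\matrixP}$ factor. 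Third — the crucial geometric step — I would bound the discrepancy between $\matrixQ[\mm_h^n]$ and $\matrixQ[\mmu_h]$: write $\matrixQ[\mm_h^n] = \matrixQ[\mmu_h] + \bigl(\matrixQ[\mm_h^n] - \matrixQ[\mmu_h]\bigr)$, so that the difference is block-diagonal with blocks $\matrixT_n(\matrixH[\matrixT_n\mm_h^n(z)] - \matrixH[\matrixT_n\mmu_h(z)])$; the mass-matrix part of the difference is harmless (bounded by $\|\mm_h^n - \mmu_h\|_{\LL^\infty}^2$, absorbed when it is small, or by a constant in general), while the stiffness part is where the factor $\mathbb{F}$ comes from, since $\beta(k)k\,\matrixL$ has norm $\simeq \beta(k)k\,h$ and its quadratic form involves nodal gradients. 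For part (i) one simply bounds $\matrixnorm{\matrixH[\matrixT_n\mm_h^n(z)] - \matrixH[\matrixT_n\mmu_h(z)]}$ by the raw $\max_z$ and keeps the $1/h^2$ from an inverse inequality applied to $\beta(k)k\,\matrixL$ on the FEM space. For part (ii), I would instead estimate that difference quantitatively: from the explicit formula \eqref{eq:tildeH}, when $1 + (\matrixT_n\mm)_3 \geq \gamma$ the map $\mm \mapsto \matrixH[\matrixT_n\mm]$ is Lipschitz with constant $\lesssim \gamma^{-1}$ (the only singularity is at $\mm = -\matrixT_n^{-1}\vectorE_3$, i.e. $(\matrixT_n\mm)_3 = -1$), and its gradient (needed to convert $\nabla\matrixH[\matrixT_n\mm_h^n]$ into $\nabla\mm_h^n$ via the chain rule inside $\matrixL$) picks up an extra $\gamma^{-2}$; carefully tracking these powers — a pointwise Lipschitz bound $\gamma^{-1}$, a derivative-of-the-difference bound producing $\gamma^{-2}\|\nabla\mm_h^n - \nabla\mmu_h\|$ plus $\gamma^{-3}(\|\nabla\mm_h^n\| + \|\nabla\mmu_h\|)\|\mm_h^n - \mmu_h\|$, and then squaring — reproduces exactly the $\gamma^{-2}$ and $\gamma^{-6}$ weights in the stated $h$-independent $\mathbb{F}$, with the $h^2$ from the inverse inequality now cancelled by the $h^{-2}$ already present (since $\nabla$ of a nodal FEM function scales like $h^{-1}$ times the nodal mismatch, and that power is precisely what the $L^\infty$-gradient norms carry).

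The last step, bounding $\|\matrixP_{\matrixQ}[\mmu_h]\matrixA_{\matrixQ}[\mm_h^n]\|_{\mmu_h}$ from above, is largely parallel: the symmetric mass and stiffness parts are controlled the same way, and the skew part $\matrixQ[\mm_h^n]^T\matrixS[\mm_h^n]\matrixQ[\mm_h^n]$ contributes $\matrixnorm{\matrixS[\mm_h^n]} \lesssim h^3$ (since $|\mm_h^n| \leq 1$ nodewise and \eqref{eq:scalingargument}), which is of the same order as the mass term and hence absorbed into the $1 + 1/\alpha_{\matrixP}$-type factor; this yields an upper bound of the form $C\,(\alpha_{\matrixP})^{-1}(\cdots)\,\mathbb{F}$ or so, and dividing the squared lower bound by the squared upper bound produces the claimed $1 - C^{-1}(\alpha/\alpha_{\matrixP})^2(\cdots)^{-2}\mathbb{F}^{-4}$. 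The main obstacle I anticipate is the third step for part (ii): getting the Lipschitz and derivative bounds on $\mm \mapsto \widetilde{\matrixH}[\mm]$ with the sharp powers of $\gamma$, since the denominator $|\mm + \vectorE_3|^2 = 2(1 + \mm_3)$ appears to increasingly high powers after differentiating $\vectorW\vectorW^T$ and one must be scrupulous about which terms carry $\gamma^{-2}$ versus $\gamma^{-3}$ before squaring — the rest is bookkeeping with \eqref{eq:scalingargument}, standard inverse inequalities on $\courantspace_h$, and the elementary eigenvalue estimates for $\weight_k$.
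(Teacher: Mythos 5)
Your proposal follows essentially the same route as the paper's proof: a field-of-values (Elman/Starke-type) GMRES residual bound taken in the $\matrixP_{\matrixQ}[\mmu_h]^{-1}$-energy inner product, combined with ellipticity and continuity estimates for $\matrixA_{\matrixQ}[\mm_h^n]$ relative to that norm and a norm equivalence between the $\mm_h^n$- and $\mmu_h$-energy norms obtained from the block-diagonal Householder differences (raw maximum plus inverse estimate for part (i), Lipschitz-type bounds in powers of $\gamma$ for part (ii)), which yields exactly the claimed $1-C^{-1}(\alpha/\alpha_{\matrixP})^2(\cdots)^{-2}\mathbb{F}^{-4}$ reduction. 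One small correction to your part-(ii) bookkeeping: the derivative-of-the-difference term must carry $\gamma^{-1}\,\norm{\nabla\mm_h^n-\nabla\mmu_h}{\LL^\infty(\Omega)}$ (not $\gamma^{-2}$) before squaring — the correct distribution is $\gamma^{-1}$, $\gamma^{-1}$, $\gamma^{-3}$ as in the paper's estimate for $\mathbb{P}_h[\mmu_h]-\mathbb{P}_h[\nnu_h]$ — since otherwise squaring would produce a $\gamma^{-4}$ weight instead of the stated $\gamma^{-2}$ in $\mathbb{F}$.
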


\subsection{Stationary preconditioning}\label{subsection:stationary}
We consider a preconditioner which is independent of the time-step.
Similarly to $\matrixM, \matrixL \in \R^{3N \times 3N}$ from Section~\ref{section:linalg}, define the matrices $\matrixM^{\textrm{2D}}, \matrixL^{\textrm{2D}} \in \R^{2N \times 2N}$, which correspond to the nodal basis $(\ppsi_i)_{i=1}^{2N}$ of $(\SS_h)^2$ from~\eqref{eq:basis2D}:
\begin{itemize}
\item $\matrixM_{ij}^{\textrm{2D}} \!:=\! \dual{\ppsi_{i}}{\ppsi_{j}}{\Omega}$ is the (symmetric and positive definite) mass matrix,
\item $\matrixL_{ij}^{\textrm{2D}} \!:=\! \dual{\nabla\ppsi_i}{\nabla\ppsi_j}{\Omega}$ is the (symmetric and positive semidefinite) stiffness matrix.
\end{itemize}
Then, consider the stationary preconditioner
\begin{align}\label{eq:defP2D}
\matrixP^{\textrm{2D}} := \big( \alpha_{\matrixP} \matrixM^{\textrm{2D}} + \beta(k)k \, \matrixL^{\textrm{2D}} \big)^{-1} \in \R^{2N \times 2N}.
\end{align}
Denote the corresponding energy scalar product by
\begin{align*}
\energydual{\vectorX}{\vectorY}{} & := \vectorX \cdot (\alpha_{\matrixP} \matrixM^{\rm{2D}} + \beta(k) k \matrixL^{\rm{2D}} ) \vectorY
\quad \textrm{for all } \vectorX, \vectorY \in \R^{2N},
\end{align*}
and the induced norm by $|\!|\!|\cdot|\!|\!|$. The following corollary discusses the performance of $\matrixP^{\textrm{2D}}$. It is a direct consequence of Theorem~\ref{theorem:theoretical_convergence}{\rm (ii)}.

\begin{corollary}\label{corollary:stationaryprecond}
Let $\alpha_{\matrixP} \geq \alpha$. Consider the preconditioned GMRES algorithm with $\matrixP^{\rm{2D}}$ from~\eqref{eq:defP2D} for the solution of~\eqref{eq:preconditionedsystem} with the initial guess $\vectorX^{(0)} \in \R^{2N}$. For $\ell \in \N_0$, let $\vectorX^{(\ell)} \in \R^{2N}$ denote the GMRES iterates with the corresponding residuals
\begin{align*}
\vectorR^{(\ell)} := \matrixP^{\rm{2D}} \vectorB_{\matrixQ}[\mm_h^n] 
- \matrixP^{\rm{2D}} \matrixA_{\matrixQ}[\mm_h^n] \, \vectorX^{(\ell)} \in \R^{2N}.
\end{align*}
Let $1 + \big(\matrixT_n\mm_h^n(z)\big)_3 \geq \gamma > 0$ for all nodes $z \in \NN_h$. Then, there exists a constant $C>1$, which depends only on $\Cmesh$, such that
\begin{align}\label{eq:estimate:stationary_1}
|\!|\!|\vectorR^{(\ell)}|\!|\!| \, 
\leq \, 
\bigg[ \, 1 - C^{-1} \Big( \, \frac{\alpha}{\alpha_{\matrixP}} \, \Big)^2 \bigg( \, 1 
+ \frac{1}{\alpha_{\matrixP}} 
+ \frac{ \norm{\weight_k - \alpha_{\matrixP}}{L^{\infty}(\Omega)} }{ \alpha_{\matrixP} } \, \bigg)^{-2} \mathbb{F}^{-4} \, \bigg]^{\ell/2} \,
|\!|\!|\vectorR^{(0)}|\!|\!|
\end{align} 
for all $\ell \in \N_0$, where
\begin{align}
\begin{split}
\mathbb{F} \, & := \,
1 
+ 
\gamma^{-2} + 
\frac{\beta(k) k}{\alpha_{\matrixP} \gamma^{6}} \, \norm{\nabla \mm_h^n}{\LL^\infty(\Omega)}^2 \,  \geq \, 1.
\end{split}
\end{align}
\end{corollary}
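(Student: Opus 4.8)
The plan is to deduce Corollary~\ref{corollary:stationaryprecond} directly from Theorem~\ref{theorem:theoretical_convergence}{\rm (ii)} by making the convenient choice $\mmu_h := \mm_h^n \in \magnetizationset_h$. With this choice, two simplifications occur. First, the energy scalar product $\energydual{\cdot}{\cdot}{\mmu_h}$ associated with $\matrixP_{\matrixQ}[\mmu_h]$ in~\eqref{eq:energyscalar} becomes $\vectorX \cdot \big( \matrixQ[\mm_h^n]^T (\alpha_{\matrixP}\matrixM + \beta(k)k\,\matrixL) \matrixQ[\mm_h^n] \big) \vectorY$, and I must identify this with the stationary energy product $\energydual{\cdot}{\cdot}{}$ defined via $\alpha_{\matrixP}\matrixM^{\textrm{2D}} + \beta(k)k\,\matrixL^{\textrm{2D}}$. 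This is exactly the content one gets from the block structure of $\matrixM$, $\matrixL$ in~\eqref{eq:blockfrom} together with the nodewise orthonormality of the Householder columns: on each diagonal block, $\big(\matrixT_n\matrixH[\matrixT_n\mm_h^n(z_i)]\big)^T\big(\matrixT_n\matrixH[\matrixT_n\mm_h^n(z_i)]\big) = \matrixH[\cdots]^T\matrixT_n^T\matrixT_n\matrixH[\cdots] = \matrixI_{2\times2}$, so $\matrixQ[\mm_h^n]^T\matrixM\matrixQ[\mm_h^n] = \matrixM^{\textrm{2D}}$ and likewise $\matrixQ[\mm_h^n]^T\matrixL\matrixQ[\mm_h^n] = \matrixL^{\textrm{2D}}$ (the $M_{ij}$ and $L_{ij}$ scalars factor out of the $3\times3$ blocks and the orthonormal $2$-frames reduce them to the $2\times2$ identity scaled by the same scalars). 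Hence $\matrixP_{\matrixQ}[\mm_h^n] = \matrixP^{\textrm{2D}}$ and $|\!|\!|\cdot|\!|\!|_{\mm_h^n} = |\!|\!|\cdot|\!|\!|$ coincide verbatim, so the residuals and the energy norm in Corollary~\ref{corollary:stationaryprecond} are precisely those in Theorem~\ref{theorem:theoretical_convergence} with $\mmu_h = \mm_h^n$.

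Second, with $\mmu_h = \mm_h^n$ all difference terms $\mm_h^n - \mmu_h$ and $\nabla\mm_h^n - \nabla\mmu_h$ in the factor $\mathbb{F}$ of Theorem~\ref{theorem:theoretical_convergence}{\rm (ii)} vanish, so that $\mathbb{F}$ collapses to $1 + \gamma^{-2}\cdot 0 + \dots$; but one must be careful, because Theorem~\ref{theorem:theoretical_convergence}{\rm (ii)} requires $1 + (\matrixT_n\mmu_h(z))_3 \geq \gamma$ as well, which here is simply the hypothesis $1 + (\matrixT_n\mm_h^n(z))_3 \geq \gamma$ already assumed in the corollary. So the qualitative hypothesis of part (ii) is met. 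After the difference terms drop, the remaining contribution is $1 + \frac{\beta(k)k}{\alpha_{\matrixP}\gamma^6}\big(\norm{\nabla\mm_h^n}{\LL^\infty(\Omega)}^2 + \norm{\nabla\mmu_h}{\LL^\infty(\Omega)}^2\big)\norm{\mm_h^n - \mmu_h}{\LL^\infty(\Omega)}^2$, which again vanishes, leaving only the bare $1$; hence a naive substitution gives $\mathbb{F} = 1$, while the corollary claims $\mathbb{F} = 1 + \gamma^{-2} + \frac{\beta(k)k}{\alpha_{\matrixP}\gamma^6}\norm{\nabla\mm_h^n}{\LL^\infty(\Omega)}^2$. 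Since the convergence estimate only degrades when $\mathbb{F}$ is enlarged and $\mathbb{F} \geq 1$ throughout, the stated (larger) $\mathbb{F}$ is a valid upper bound for the optimal one; I would simply remark that replacing $\mathbb{F}$ by any larger quantity preserves the inequality~\eqref{eq:estimate:theoretical_1}, and the quantity displayed in the corollary dominates $1$. Thus the bound~\eqref{eq:estimate:stationary_1} follows from~\eqref{eq:estimate:theoretical_1} verbatim, with the same constant $C$ depending only on $\Cmesh$.

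The only genuine subtlety — and what I expect to be the main obstacle to write cleanly rather than conceptually hard — is the identification $\matrixQ[\mm_h^n]^T\matrixM\matrixQ[\mm_h^n] = \matrixM^{\textrm{2D}}$ and $\matrixQ[\mm_h^n]^T\matrixL\matrixQ[\mm_h^n] = \matrixL^{\textrm{2D}}$. One has to match the block-indexing conventions of the two bases $(\pphi_i)_{i=1}^{3N}$ in~\eqref{eq:basis3D} and $(\ppsi_i)_{i=1}^{2N}$ in~\eqref{eq:basis2D}: entrywise, $(\matrixQ^T\matrixM\matrixQ)$ restricted to the $(i,j)$ node block equals $M_{ij}\,\big(\matrixT_n\matrixH[\matrixT_n\mm_h^n(z_i)]\big)^T\big(\matrixT_n\matrixH[\matrixT_n\mm_h^n(z_j)]\big)$, which is $M_{ij}\,\matrixH[\matrixT_n\mm_h^n(z_i)]^T\matrixH[\matrixT_n\mm_h^n(z_j)]$ — this is \emph{not} $M_{ij}\matrixI_{2\times2}$ for $i\neq j$ in general, yet $\matrixM^{\textrm{2D}}$ on that block \emph{is} $M_{ij}\matrixI_{2\times2}$, so the identification $\matrixP_{\matrixQ}[\mm_h^n] = \matrixP^{\textrm{2D}}$ is in fact \emph{false} for $i \neq j$. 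Therefore the correct route is \textbf{not} to claim equality of the two matrices, but rather to note that, under $\mmu_h = \mm_h^n$, the residuals $\vectorR^{(\ell)}$ of the corollary and those of the theorem refer to the \emph{same} GMRES run with preconditioner $\matrixP_{\matrixQ}[\mm_h^n]$ (since the operator being preconditioned, $\matrixA_{\matrixQ}[\mm_h^n] = \matrixQ[\mm_h^n]^T\matrixA_k[\mm_h^n]\matrixQ[\mm_h^n]$, depends on $\mm_h^n$ anyway), and that $\matrixP^{\textrm{2D}}$ is spectrally equivalent to $\matrixP_{\matrixQ}[\mm_h^n]$ with constants depending only on $\Cmesh$ — indeed~\eqref{eq:scalingargument} together with the orthonormality of the columns gives $\norm{\vvarphi_h}{\LL^2}^2 \simeq h^3\sum_i|\matrixQ_i\vectorX_i|^2 = h^3\sum_i|\vectorX_i|^2 \simeq \norm{\ppsi_h}{\LL^2}^2$ uniformly, and the stiffness part is handled identically. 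This spectral equivalence is absorbed into the constant $C$, and the factor $\mathbb{F}$ then carries over from Theorem~\ref{theorem:theoretical_convergence}{\rm (ii)} with the vanishing difference terms, dominated by the displayed expression. I would therefore organise the proof as: (1) set $\mmu_h = \mm_h^n$; (2) observe the qualitative hypothesis of Theorem~\ref{theorem:theoretical_convergence}{\rm (ii)} holds; (3) observe $\mathbb{F}$ from part (ii) is bounded by $1 + \gamma^{-2} + \frac{\beta(k)k}{\alpha_{\matrixP}\gamma^6}\norm{\nabla\mm_h^n}{\LL^\infty(\Omega)}^2$ after the two difference terms vanish and $\norm{\mm_h^n-\mmu_h}{\LL^\infty}=0$ (using $\mathbb{F}\geq1$ to absorb the surviving constant); (4) relate $|\!|\!|\cdot|\!|\!|$ to $|\!|\!|\cdot|\!|\!|_{\mm_h^n}$ via the uniform spectral equivalence $\matrixP^{\textrm{2D}} \simeq \matrixP_{\matrixQ}[\mm_h^n]$ coming from~\eqref{eq:scalingargument} and column orthonormality, folding the equivalence constant into $C$; and (5) conclude~\eqref{eq:estimate:stationary_1}.
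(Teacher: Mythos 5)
Your plan starts from $\mmu_h := \mm_h^n$, realizes midway that $\matrixP_{\matrixQ}[\mm_h^n] \neq \matrixP^{\rm 2D}$ (correctly, since the off-diagonal blocks $\matrixH[\matrixT_n\mm_h^n(z_i)]^T\matrixH[\matrixT_n\mm_h^n(z_j)]$ need not equal $\matrixI_{2\times 2}$), and then tries to patch this with a spectral-equivalence argument. Both halves of the patch fail. First, Corollary~\ref{corollary:stationaryprecond} is a statement about GMRES preconditioned by $\matrixP^{\rm 2D}$, while Theorem~\ref{theorem:theoretical_convergence} with $\mmu_h=\mm_h^n$ is about GMRES preconditioned by $\matrixP_{\matrixQ}[\mm_h^n]$; these are genuinely different preconditioners, hence different Krylov subspaces, different iterates $\vectorX^{(\ell)}$ and different residuals $\vectorR^{(\ell)}$. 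Your claim that they ``refer to the same GMRES run'' is not correct, so the bound from Theorem~\ref{theorem:theoretical_convergence} with $\mmu_h=\mm_h^n$ does not transfer. Second, the spectral equivalence $\matrixP^{\rm 2D}\simeq\matrixP_{\matrixQ}[\mm_h^n]$ with constants depending \emph{only} on $\Cmesh$ is false: the scaling argument~\eqref{eq:scalingargument} handles the mass part, but ``the stiffness part is handled identically'' is not true --- comparing $\norm{\nabla\widetilde{\mathbb{P}}_h[\mm_h^n]\vectorX}{\LL^2(\Omega)}$ with $\norm{\nabla\ww_h}{\LL^2(\Omega)}$ requires Lemma~\ref{lemma:superfancylemma}(i), which brings in the factor $\gamma^{-2}\norm{\nabla\mm_h^n}{\LL^\infty(\Omega)}$. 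If such uniform equivalence held, the factor $\mathbb{F}$ in the corollary would be unnecessary; its presence is precisely because the equivalence is not uniform.

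The key idea you are missing is to pick $\mmu_h$ to be the \emph{constant} field $\mmu_h := \matrixT_n\vectorE_3 \in \magnetizationset_h$. Then $\matrixT_n\mmu_h(z) = \vectorE_3$ at every node, so every Householder block equals $\matrixH[\vectorE_3]=[\vectorE_1,\vectorE_2]$, all cross-products $\matrixH[\cdot]^T\matrixH[\cdot]$ equal $\matrixI_{2\times2}$ for \emph{every} pair $(i,j)$, and combining with the block forms~\eqref{eq:blockfrom} gives $\matrixP_{\matrixQ}[\mmu_h]=\matrixP^{\rm 2D}$ \emph{exactly} (not merely up to spectral equivalence). Hence the preconditioner, the GMRES run, the residuals, and the energy norm in Theorem~\ref{theorem:theoretical_convergence} with this $\mmu_h$ all coincide verbatim with those in the corollary. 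The hypothesis $1+(\matrixT_n\mmu_h(z))_3 = 2 \geq \gamma$ is trivially satisfied, $\nabla\mmu_h\equiv\0$ kills two terms in the factor of Theorem~\ref{theorem:theoretical_convergence}(ii), and $\norm{\mm_h^n-\mmu_h}{\LL^\infty(\Omega)}\leq 2$ together with $\gamma\leq 2$ (so $\gamma^{-2}\lesssim\gamma^{-6}$) collapses the remaining terms to the displayed $\mathbb{F}$ up to a constant absorbed into $C$. Your step (3) --- that the corollary's $\mathbb{F}$ dominates the theorem's and the estimate only improves when $\mathbb{F}$ is shrunk --- is fine once this correct $\mmu_h$ is in place, but with $\mmu_h=\mm_h^n$ the needed norm identification has no uniform version and the argument cannot be completed.
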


\begin{proof}
Recall that $\matrixT_n = \matrixT_n^{-1} = \matrixT_n^T$. For constant $\mmu_h := \matrixT_n \vectorE_3 \in \magnetizationset_h$, we get that
\begin{subequations}\label{eq:concludestationary}
\begin{align}
\matrixT_n \matrixH[\matrixT_n \mmu_h(z_i)] = \matrixT_n \matrixH[\vectorE_3]
= \matrixT_n [  \vectorE_1, \vectorE_2 ] \in \R^{3 \times 2}
\end{align}
and thus
\begin{align}
( \matrixT_n \matrixH[\matrixT_n \mmu_h(z_i)] )^T (\matrixT_n \matrixH[\matrixT_n \mmu_h(z_j)] )
=  [  \vectorE_1, \vectorE_2 ]^T \matrixT_n \matrixT_n [  \vectorE_1, \vectorE_2 ] = \matrixI_{2 \times 2}.
\end{align}
\end{subequations}
for all $i,j = 1, \dots, N$. Together with the block forms from~\eqref{eq:blockfrom}, this yields that
\begin{align*}
\matrixP_{\matrixQ}[\mmu_h] \stackrel{\eqref{eq:forma_constrained_preconditioner}}{=} 
\big( \, \matrixQ[\mmu_{h}]^T \big( \, \alpha_{\matrixP} \matrixM + \beta(k)k \matrixL \, \big) \matrixQ[\mmu_h]  \, \big)^{-1}
\stackrel{\eqref{eq:concludestationary}}{=} 
\big( \alpha_{\matrixP} \matrixM^{\textrm{2D}} + \beta(k)k \matrixL^{\textrm{2D}} \big)^{-1} 
\stackrel{\eqref{eq:defP2D}}{=} 
\matrixP^{\textrm{2D}}.
\end{align*}
Since $\nabla \mmu_h = \0$ a.e.\ in $\Omega$ and $\norm{\mm_h^n - \mmu_h}{\LL^{\infty}(\Omega)} \leq 2$, Theorem~\ref{theorem:theoretical_convergence}{\rm (ii)} proves the result.
\end{proof}

\subsection{Practical preconditioner}\label{subsection:practicalprecond}
For general problems of type~\eqref{eq:linearsystem:constrainedQ}, the work~\cite{NS96} proposes (without a proof) to consider the practical preconditioner
\begin{align}\label{eq:pracital_constrained_preconditioner}
 \widetilde{\matrixP}_{\matrixQ}[\mm_h^n] := 
 \matrixQ[\mm_h^n]^T \big(\alpha_{\matrixP}\matrixM+\beta(k) k\matrixL\big)^{-1} \matrixQ[\mm_h^n] \in\R^{2N\times2N}
\end{align}
to approximate the theoretical preconditioner $\matrixP[\mm_h^n]$ from~\eqref{eq:forma_constrained_preconditioner}. We note that unlike $\matrixP_{\matrixQ}[\mmu_h]$, it makes no sense to consider $\widetilde{\matrixP}_{\matrixQ}[\mmu_h]$ for $\mmu_h \neq \mm_h^n$, since the preconditioned GMRES algorithm exploits only the matrix-vector product with the preconditioner and $\matrixQ[\mm_h^n]$ is computed anyway. For the analysis, we define the energy scalar product
\begin{align}\label{eq:energyscalarN}
\energydual{\vectorX}{\vectorY}{\mm_h^n}^{\prime} & := \vectorX \cdot \big( \, \widetilde{\matrixP}_{\matrixQ}[\mm_h^n] \, \big)^{-1}\vectorY
\quad \textrm{for all } \vectorX, \vectorY \in \R^{2N},
\end{align}
and denote the corresponding norm by $|\!|\!|\cdot|\!|\!|_{\mm_h^n}^{\prime}$. The following theorem discusses the performance of preconditioned GMRES with the practical preconditioner $\widetilde{\matrixP}_{\matrixQ}[\mm_h^n]$. Its proof is postponed to Section~\ref{proof:practical_convergence}. 
\pagebreak

\begin{theorem}\label{theorem:practical_convergence}
Let $\alpha_{\matrixP} \geq \alpha$. Consider the preconditioned GMRES algorithm with the preconditioner $\widetilde{\matrixP}_{\matrixQ}[\mm_h^n]$ from~\eqref{eq:pracital_constrained_preconditioner} for the solution of~\eqref{eq:preconditionedsystem} with the initial guess $\widetilde{\vectorX}^{(0)} \in \R^{2N}$. For $\ell \in \N_0$, let $\widetilde{\vectorX}^{(\ell)} \in \R^{2N}$ denote the GMRES iterates with the corresponding residuals
\begin{align*}
\widetilde{\vectorR}^{(\ell)} := \widetilde{\matrixP}_{\matrixQ}[\mm_h^n] \vectorB_{\matrixQ}[\mm_h^n] 
- \widetilde{\matrixP}_{\matrixQ}[\mm_h^n] \matrixA_{\matrixQ}[\mm_h^n] \, \widetilde{\vectorX}^{(\ell)} \in \R^{2N}.
\end{align*}
Then, the following two assertions {\rm (i)}--{\rm (ii)} hold true:

{\rm (i)} There exists a constant $C>1$, which depends only on $\Cmesh$, such that
\begin{align}\label{eq:estimate:practical__new_1}
|\!|\!|\widetilde{\vectorR}^{(\ell)}|\!|\!|_{\mm_h^n}^{\prime} \, 
\leq \, 
\bigg[ \, 1 - C^{-1} \Big( \, \frac{\alpha}{\alpha_{\matrixP}} \, \Big)^2  \bigg( \, 1 
+ \frac{1}{\alpha_{\matrixP}} 
+ \frac{ \norm{\weight_k - \alpha_{\matrixP}}{L^{\infty}(\Omega)} }{ \alpha_{\matrixP} } \, \bigg)^{-2} \widetilde{\mathbb{F}}^{-1} \, \bigg]^{\ell/2} \,
|\!|\!|\widetilde{\vectorR}^{(0)}|\!|\!|_{\mm_h^n}^{\prime}
\end{align} 
for all $\ell \in \N_0$, where
\begin{align}\label{eq:factor:practical_1}
\widetilde{\mathbb{F}} \, := \, 1  + \frac{\beta(k)k}{\alpha_{\matrixP} h^2} \, \geq \, 1.
\end{align}

{\rm (ii)} If, additionally, $1 + (\matrixT_n\mm_h^n(z))_3 \geq \gamma > 0$ for all nodes $z \in \NN_h$, the statement of {\rm (i)} holds with the $h$-independent factor
\begin{align}
\widetilde{\mathbb{F}} \, & 
:= \,  1 + \frac{\beta(k)k}{\alpha_{\matrixP} \gamma^4} \norm{\nabla \mm_h^n}{\LL^\infty(\Omega)}^2
 \, \geq \, 1.
\end{align}
\end{theorem}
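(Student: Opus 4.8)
The plan is to establish the linear convergence of preconditioned GMRES via the standard Elman-type estimate, which states that if $\matrixB$ is the (nonsymmetric) iteration matrix and $\matrixB_{\rm sym} := \tfrac12(\matrixB + \matrixB^T)$ its symmetric part (with respect to the relevant energy inner product), then $\|\vectorR^{(\ell)}\| \le \big(1 - \lambda_{\min}(\matrixB_{\rm sym})^2 / \|\matrixB\|^2\big)^{\ell/2} \|\vectorR^{(0)}\|$, where both $\lambda_{\min}$ and the norm are taken in the energy inner product $\energydual{\cdot}{\cdot}{\mm_h^n}^{\prime}$ induced by $(\widetilde{\matrixP}_{\matrixQ}[\mm_h^n])^{-1}$. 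In this inner product the relevant operator is $\widetilde{\matrixP}_{\matrixQ}[\mm_h^n] \matrixA_{\matrixQ}[\mm_h^n]$, which is self-adjoint plus skew-adjoint according to the splitting of $\matrixA_k[\mm_h^n]$ into its symmetric part $\alpha\matrixM_k[\mm_h^n] + \beta(k)k\,\matrixL$ and its skew-symmetric part $-\matrixS[\mm_h^n]$; the skew part contributes nothing to $\matrixB_{\rm sym}$. So the two quantities I must control are: (a) a lower bound for $\lambda_{\min}$ of the symmetric part, i.e.\ a spectral equivalence $\matrixQ^T(\alpha\matrixM_k + \beta(k)k\,\matrixL)\matrixQ \gtrsim c\,(\widetilde{\matrixP}_{\matrixQ})^{-1}$ with $c$ of the stated size; and (b) an upper bound for $\|\widetilde{\matrixP}_{\matrixQ}[\mm_h^n]\matrixA_{\matrixQ}[\mm_h^n]\|$ in the energy norm, where the skew part $-\matrixS[\mm_h^n]$ does contribute and the factor $\widetilde{\mathbb F}$ must appear.

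For step (a), the key point is the algebraic identity that, because $\widetilde{\matrixP}_{\matrixQ}[\mm_h^n] = \matrixQ^T(\alpha_{\matrixP}\matrixM + \beta(k)k\,\matrixL)^{-1}\matrixQ$ is built from the \emph{same} $\matrixQ$ that defines $\matrixA_{\matrixQ}$, one can compare $\matrixQ^T \matrixC \matrixQ$ with $(\matrixQ^T \matrixC^{-1} \matrixQ)^{-1}$ for $\matrixC := \alpha_{\matrixP}\matrixM + \beta(k)k\,\matrixL$; since the columns of $\matrixQ$ are orthonormal nodewise (each block $\matrixT_n\matrixH[\matrixT_n\mm_h^n(z_i)]$ has orthonormal columns by construction in~\eqref{eq:householder}), $\range(\matrixQ)$ is a subspace on which the two expressions relate by the usual Schur-complement / Kato inequality, giving $(\matrixQ^T\matrixC^{-1}\matrixQ)^{-1} \le \matrixQ^T\matrixC\matrixQ$. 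Then I replace $\alpha\matrixM_k[\mm_h^n] \ge \tfrac\alpha2\matrixM$ (using $\weight_k \ge \alpha/2$ from~\eqref{eq:stabilizations}) and $\alpha_{\matrixP}\matrixM + \beta(k)k\,\matrixL \lesssim (\alpha_{\matrixP} + \|\weight_k - \alpha_{\matrixP}\|_{L^\infty}/\alpha_{\matrixP} + \ldots)$ times $\alpha\matrixM_k + \beta(k)k\,\matrixL$ to produce exactly the second factor in~\eqref{eq:estimate:practical__new_1}; the $\mathcal O(\Cmesh)$-constant enters through~\eqref{eq:scalingargument} when passing between nodal and $L^2$ quantities. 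This part is essentially the same as in the proof of Theorem~\ref{theorem:theoretical_convergence}, just without the extra $\matrixH$-difference terms since here the preconditioner uses the same magnetization, so $\widetilde{\mathbb F}$ reduces to the stated simple form.

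For step (b) — the norm bound, which I expect to be the main obstacle — I must estimate $\energydual{\widetilde{\matrixP}_{\matrixQ}\matrixA_{\matrixQ}\vectorX}{\widetilde{\matrixP}_{\matrixQ}\matrixA_{\matrixQ}\vectorX}{\mm_h^n}^{\prime} = \matrixA_{\matrixQ}\vectorX \cdot \widetilde{\matrixP}_{\matrixQ}\matrixA_{\matrixQ}\vectorX$, so the real work is bounding $\widetilde{\matrixP}_{\matrixQ} = \matrixQ^T\matrixC^{-1}\matrixQ$ applied to $\matrixA_{\matrixQ}\vectorX = \matrixQ^T\matrixA_k\matrixQ\vectorX$. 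The symmetric part of $\matrixA_k$ is controlled by $\matrixC$ up to the scalar factor already identified, so it contributes $\mathcal O(1)$; the dangerous term is the skew part $-\matrixS[\mm_h^n]$, for which $\|\matrixS[\mm_h^n]\vectorphi_h\|$ involves an $L^2$-bound on $\mm_h^n \times (\cdot)$ that loses nothing in $h$ on its own, but when measured in the $\matrixC^{-1}$-weighted norm against $\matrixC$ one picks up the ratio of the largest eigenvalue of $\matrixC$ (namely $\sim \alpha_{\matrixP} + \beta(k)k\,h^{-2}$ after using the inverse inequality / the spectral bound $\matrixL \lesssim h^{-2}\matrixM$) to its smallest (namely $\sim\alpha_{\matrixP}$), which is precisely $\widetilde{\mathbb F} = 1 + \beta(k)k/(\alpha_{\matrixP}h^2)$. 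Hence the cross term $\matrixS\,\matrixC^{-1}\matrixS$ is bounded by $\widetilde{\mathbb F}$ times a mass-matrix term, and since the residual-reduction exponent involves $\lambda_{\min}^2/\|\matrixB\|^2$, and $\|\matrixB\|^2$ carries a factor $\widetilde{\mathbb F}$ (from the $\matrixS\matrixC^{-1}\matrixS$ estimate — note only one power of $\widetilde{\mathbb F}$ enters, not $\widetilde{\mathbb F}^2$, because $\matrixS$ appears only linearly in $\matrixB$ but quadratically in the norm), we arrive at the $\widetilde{\mathbb F}^{-1}$ in~\eqref{eq:estimate:practical__new_1} rather than $\widetilde{\mathbb F}^{-4}$.

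For part (ii), the only change is to replace the crude inverse inequality $\matrixL \lesssim h^{-2}\matrixM$ (which is where the $h^{-2}$ in $\widetilde{\mathbb F}$ comes from) by a nodewise estimate: under the assumption $1 + (\matrixT_n\mm_h^n(z))_3 \ge \gamma$, the Householder reflector $\widetilde{\matrixH}[\matrixT_n\mm_h^n(z)] = \matrixI - 2\vectorW\vectorW^T$ with $\vectorW = (\matrixT_n\mm_h^n(z)+\vectorE_3)/|\matrixT_n\mm_h^n(z)+\vectorE_3|$ depends smoothly on $\mm_h^n(z)$ with derivative bounded by $\mathcal O(\gamma^{-1})$ (since $|\matrixT_n\mm_h^n(z)+\vectorE_3|^2 = 2(1 + (\matrixT_n\mm_h^n(z))_3) \ge 2\gamma$ stays away from its singularity at $-\vectorE_3$), so $\matrixQ$ is a ``Lipschitz-in-$\mm_h^n$'' reparametrization of the tangent bundle; consequently $\matrixQ^T\matrixL\matrixQ$ can be bounded in terms of $\matrixL^{\rm 2D}$ plus lower-order mass terms with constants depending on $\gamma^{-1}$ and $\|\nabla\mm_h^n\|_{\LL^\infty}$ instead of on $h^{-2}$. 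Plugging $\matrixL^{\rm 2D} \le C\,h^{-2}\matrixM^{\rm 2D}$ would bring $h^{-2}$ back, so instead I keep $\beta(k)k\,\matrixL^{\rm 2D}$ intact and observe that in the $\matrixC^{-1}$-norm the stiffness contribution is automatically tamed — the $\beta(k)k\,\matrixL$ inside $\matrixC$ and the $\beta(k)k\,\matrixL$ coming from $\matrixS$-free estimates cancel in the ratio, leaving only the genuinely $\gamma$-dependent, $h$-independent constant $1 + \beta(k)k\,\|\nabla\mm_h^n\|_{\LL^\infty}^2/(\alpha_{\matrixP}\gamma^4)$. The bookkeeping of powers of $\gamma$ (the $\gamma^{-4}$ versus $\gamma^{-2}$) is the delicate part here and follows from tracking that each $\matrixH$-derivative costs $\gamma^{-1}$ and it appears twice on each side of the quadratic form.
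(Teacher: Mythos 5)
Your step (a) is fine and is essentially the paper's own reduction: the inequality $\big(\matrixQ^T(\alpha_{\matrixP}\matrixM+\beta(k)k\,\matrixL)^{-1}\matrixQ\big)^{-1}\preceq \matrixQ^T(\alpha_{\matrixP}\matrixM+\beta(k)k\,\matrixL)\matrixQ$, valid because the blocks of $\matrixQ[\mm_h^n]$ have orthonormal columns, is exactly the statement $\big(\widetilde{\matrixP}_{\matrixQ}[\mm_h^n]\big)^{-1}\preceq\big(\matrixP_{\matrixQ}[\mm_h^n]\big)^{-1}$ that the paper obtains from the fictitious space lemma with $c_1=1$, and it yields the coercivity constant $\simeq\alpha/\alpha_{\matrixP}$ with no $h$-dependence. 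The gap is in step (b). There the quantity you must control is $\|\matrixB\|'^2=\sup_{\vectorX}\,(\matrixA_{\matrixQ}\vectorX)\cdot\widetilde{\matrixP}_{\matrixQ}(\matrixA_{\matrixQ}\vectorX)\,/\,\vectorX\cdot\widetilde{\matrixP}_{\matrixQ}^{-1}\vectorX$, and your accounting of where $\widetilde{\mathbb{F}}$ enters is wrong in two respects. First, the skew part $-\matrixQ^T\matrixS[\mm_h^n]\matrixQ$ is \emph{not} the dangerous term: since $|\mm_h^n|\le1$ it is a mass-matrix-type perturbation, and its contribution to $\|\matrixB\vectorX\|'^2$ is bounded by $\alpha_{\matrixP}^{-2}\|\vectorX\|'^2$ with constants depending only on $\Cmesh$ — no factor $\widetilde{\mathbb{F}}$ at all. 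Second, the genuinely dangerous contribution is the \emph{symmetric} (stiffness) part: writing $\widetilde{\matrixP}_{\matrixQ}\preceq\widetilde{\mathbb{F}}\,\matrixP_{\matrixQ}$ and $\matrixP_{\matrixQ}^{-1}\preceq\widetilde{\mathbb{F}}\,\widetilde{\matrixP}_{\matrixQ}^{-1}$ (which is the content of the paper's assumption (FS2), i.e.\ the non-orthogonality of the nodewise tangent projection $\matrixQ\matrixQ^T$ with respect to $\alpha_{\matrixP}\matrixM+\beta(k)k\,\matrixL$), one picks up one power of $\widetilde{\mathbb{F}}$ in the middle and a second one when converting $\|\vectorX\|_{\matrixP_{\matrixQ}^{-1}}$ back to $\|\vectorX\|'$, so that in general $\|\matrixB\|'^2\simeq\widetilde{\mathbb{F}}^2$; indeed, in the purely symmetric model ($\matrixS=\0$, $\weight_k\equiv\alpha_{\matrixP}$) one has $\matrixB=\widetilde{\matrixP}_{\matrixQ}\matrixP_{\matrixQ}^{-1}$, whose largest eigenvalue equals the sharp equivalence constant, which can be of order $1+\beta(k)k/(\alpha_{\matrixP}h^2)$. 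Hence your claimed bound ``$\|\matrixB\|^2$ carries only one factor $\widetilde{\mathbb{F}}$'' is unsubstantiated and generically false, and the plain Elman estimate $1-\lambda_{\min}^2/\|\matrixB\|'^2$ then only delivers the reduction factor with $\widetilde{\mathbb{F}}^{-2}$, which is strictly weaker than the theorem's $\widetilde{\mathbb{F}}^{-1}$.

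To reach $\widetilde{\mathbb{F}}^{-1}$ you need the sharper field-of-values estimate the paper uses (Starke), in which the second factor is $\widetilde{\gamma}^{(2)}=\inf_{\vectorX}\vectorX\cdot\matrixA_{\matrixQ}^{-1}\vectorX\,/\,\vectorX\cdot\widetilde{\matrixP}_{\matrixQ}\vectorX$ rather than $\lambda_{\min}/\|\matrixB\|'^2$. The paper bounds $\widetilde{\gamma}^{(1)}$ and $\widetilde{\gamma}^{(2)}$ by the corresponding quantities for the theoretical preconditioner $\matrixP_{\matrixQ}[\mm_h^n]$ (where $\kappa=1$, so no loss), and the passage from $\matrixP_{\matrixQ}$ to $\widetilde{\matrixP}_{\matrixQ}$ — via the fictitious space lemma, whose constant $c_2$ is estimated by the $\HH^1$-stability of the nodewise projection $\Ppi_h[\mm_h^n]$ (Lemma~\ref{lemma:superfancylemma}\,(iii)) or an inverse inequality — costs exactly one factor $c_2=\widetilde{\mathbb{F}}$, entering only through $\widetilde{\gamma}^{(2)}$. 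This is also where the $\gamma^{-4}\norm{\nabla\mm_h^n}{\LL^\infty(\Omega)}^2$ of part (ii) comes from; your sketch for (ii) points in the right direction (Lipschitz dependence of the Householder blocks with constants $\gamma^{-1}$) but, as written, it is attached to a norm bound that cannot give the stated exponent.
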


\subsection{Jacobi-type preconditioner}\label{subsection:jacobi}
Consider the following approximation to the stationary preconditioner $\matrixP^{\rm 2D}$ from~\eqref{eq:defP2D}: Recalling $M_{ij}, L_{ij} \in \R$ from~\eqref{eq:defai}, we set
\begin{align}\label{eq:defjac}
\matrixP^{\textrm{jac}}_i \, := \, \big( \alpha_{\matrixP} M_{ii} + \beta(k)k L_{ii} \big)^{-1} 
\, \matrixI_{2 \times 2}
\in \R^{2 \times 2} \quad \textrm{for all } i=1,\ldots,N
\end{align}
and define the stationary Jacobi-type preconditioner
\begin{align}
\matrixP^{\textrm{jac}} := 
 \begin{pmatrix} \matrixP^{\textrm{jac}}_1 &\0&\cdots&\0\\
 \0& \matrixP^{\textrm{jac}}_2 &\ddots&\vdots\\
 \vdots&\ddots&\ddots&\0\\
 \0&\cdots&\0& \matrixP^{\textrm{jac}}_N
 \end{pmatrix}\in\R^{2N\times2N}.
\end{align}
Given $\mm_h^n \in \magnetizationset_h$, the matrix $\matrixP_{\matrixQ}^{\textrm{jac}}[\mm_h^n] \in \R^{2N \times 2N}$
\begin{align*}
\Big[ \, \big( \, \matrixP_{\matrixQ}^{\textrm{jac}}[\mm_h^n] \, \big)^{-1} \Big]_{ij} := 
\begin{cases}
\big( \matrixQ[\mm_h^n]^T (\alpha_{\matrixP} \matrixM + \beta(k)k \matrixL) \matrixQ[\mm_h^n] \big)_{ii} & \textrm{for } i = j, \\
0 & \textrm{else,}
\end{cases}
\end{align*}
is the Jacobi-type approximation of of $\matrixP_{\matrixQ}[\mm_h^n]$ from~\eqref{eq:forma_constrained_preconditioner}.
Similarly, the matrix
\begin{align*}
\matrixQ[\mm_h^n]^T \widetilde{\matrixP}^{\textrm{jac}} \matrixQ[\mm_h^n] \in \R^{2N \times 2N}, 
\quad \textrm{where }
\Big[ \, \big( \, \widetilde{\matrixP}^{\textrm{jac}} \, \big)^{-1} \, \Big]_{ij} := 
\begin{cases}
\big( \alpha_{\matrixP} \matrixM + \beta(k)k \matrixL \big)_{ii} & \textrm{for } i = j, \\
0 & \textrm{else,}
\end{cases}
\end{align*}
is the Jacobi-type approximation of $\widetilde{\matrixP}_{\matrixQ}[\mm_h^n]$ from~\eqref{eq:pracital_constrained_preconditioner}. The following proposition states that all three definitions of Jacobi-type preconditioners coincide.

\begin{proposition}\label{prop:Jacobi}
It holds that
\begin{align*}
 \matrixP^{\rm{jac}} = \matrixP_{\matrixQ}^{\rm{jac}}[\mm_h^n] = \matrixQ[\mm_h^n]^T \widetilde{\matrixP}^{\rm{jac}} \matrixQ[\mm_h^n].
\end{align*}
\end{proposition}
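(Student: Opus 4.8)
The plan is to prove the two equalities in Proposition~\ref{prop:Jacobi} by a direct block computation, exploiting the fact that $\matrixQ[\mm_h^n]$ from~\eqref{eq:defQ} is block-diagonal with $3\times2$ blocks $\matrixT_n\matrixH[\matrixT_n\mm_h^n(z_i)]$, and that $\matrixM$ and $\matrixL$ have the block forms~\eqref{eq:blockfrom} in which the $(i,j)$ block is a scalar multiple ($M_{ij}$ resp.\ $L_{ij}$) of $\matrixI_{3\times3}$. The key algebraic observation is that the $(i,i)$ block of $\matrixQ[\mm_h^n]^T(\alpha_{\matrixP}\matrixM+\beta(k)k\matrixL)\matrixQ[\mm_h^n]$ equals
\[
(\matrixT_n\matrixH[\matrixT_n\mm_h^n(z_i)])^T\big(\alpha_{\matrixP}M_{ii}+\beta(k)kL_{ii}\big)\matrixI_{3\times3}\,(\matrixT_n\matrixH[\matrixT_n\mm_h^n(z_i)])
=\big(\alpha_{\matrixP}M_{ii}+\beta(k)kL_{ii}\big)\,\matrixI_{2\times2},
\]
since $\matrixH[\matrixT_n\mm_h^n(z_i)]$ has orthonormal columns and $\matrixT_n^T\matrixT_n=\matrixI_{3\times3}$, so that $(\matrixT_n\matrixH[\cdot])^T(\matrixT_n\matrixH[\cdot])=\matrixH[\cdot]^T\matrixH[\cdot]=\matrixI_{2\times2}$. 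Comparing with~\eqref{eq:defjac}, this is precisely $(\matrixP^{\rm jac}_i)^{-1}$.

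First I would establish $\matrixP^{\rm jac}=\matrixP_{\matrixQ}^{\rm jac}[\mm_h^n]$. By definition, $\big(\matrixP_{\matrixQ}^{\rm jac}[\mm_h^n]\big)^{-1}$ is the block-diagonal matrix whose $(i,i)$ block is the $(i,i)$ block of $\matrixQ[\mm_h^n]^T(\alpha_{\matrixP}\matrixM+\beta(k)k\matrixL)\matrixQ[\mm_h^n]$; by the computation above this block is $\big(\alpha_{\matrixP}M_{ii}+\beta(k)kL_{ii}\big)\matrixI_{2\times2}=(\matrixP_i^{\rm jac})^{-1}$, so inverting the block-diagonal matrix block by block gives $\matrixP_{\matrixQ}^{\rm jac}[\mm_h^n]=\operatorname{diag}(\matrixP_1^{\rm jac},\dots,\matrixP_N^{\rm jac})=\matrixP^{\rm jac}$. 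Next I would handle $\matrixP^{\rm jac}=\matrixQ[\mm_h^n]^T\widetilde{\matrixP}^{\rm jac}\matrixQ[\mm_h^n]$. Here $(\widetilde{\matrixP}^{\rm jac})^{-1}$ is block-diagonal with $(i,i)$ block $\big(\alpha_{\matrixP}\matrixM+\beta(k)k\matrixL\big)_{ii}=\big(\alpha_{\matrixP}M_{ii}+\beta(k)kL_{ii}\big)\matrixI_{3\times3}$, hence $\widetilde{\matrixP}^{\rm jac}$ is block-diagonal with $(i,i)$ block $\big(\alpha_{\matrixP}M_{ii}+\beta(k)kL_{ii}\big)^{-1}\matrixI_{3\times3}$; conjugating by the block-diagonal $\matrixQ[\mm_h^n]$ preserves block-diagonality, and the $(i,i)$ block becomes $\big(\alpha_{\matrixP}M_{ii}+\beta(k)kL_{ii}\big)^{-1}(\matrixT_n\matrixH[\cdot])^T(\matrixT_n\matrixH[\cdot])=\big(\alpha_{\matrixP}M_{ii}+\beta(k)kL_{ii}\big)^{-1}\matrixI_{2\times2}=\matrixP_i^{\rm jac}$, which again assembles to $\matrixP^{\rm jac}$.

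There is essentially no hard part here: the proof is a routine verification, and the only thing to be careful about is the bookkeeping between the $3N$-indexed bases $(\pphi_j)$ and the $2N$-indexed bases $(\ppsi_j)$, and the corresponding block structure (grouping the $3N$, resp.\ $2N$, coordinates into $N$ blocks of size $3$, resp.\ $2$). The one genuine ingredient, already noted in Remark~\ref{remark:howtosolve}, is that each $\matrixH[\matrixT_n\mm_h^n(z_i)]$ has orthonormal columns, so that the $3\times2$ transformation blocks of $\matrixQ[\mm_h^n]$ are isometries onto their range; this is what makes the Jacobi diagonal block of the $2N\times2N$ system collapse to a scalar multiple of $\matrixI_{2\times2}$. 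I would therefore present the argument compactly: state the block form of $\matrixQ[\mm_h^n]^T(\alpha_{\matrixP}\matrixM+\beta(k)k\matrixL)\matrixQ[\mm_h^n]$ on the diagonal, observe it equals $(\alpha_{\matrixP}M_{ii}+\beta(k)kL_{ii})\matrixI_{2\times2}$, and read off all three identities by inspection.
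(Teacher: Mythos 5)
Your proof is correct and uses exactly the ingredients the paper's (one-line) proof invokes: the block forms \eqref{eq:blockfrom} of $\matrixM$ and $\matrixL$, the block-diagonal structure of $\matrixQ[\mm_h^n]$, and the orthonormality of the columns of each block $\matrixT_n\matrixH[\matrixT_n\mm_h^n(z_i)]$, so that every diagonal block collapses to $(\alpha_{\matrixP}M_{ii}+\beta(k)kL_{ii})\,\matrixI_{2\times2}$. It is simply a fully written-out version of the paper's argument; no gap and no genuinely different route.
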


\begin{proof}
The statement follows from the representations of $\matrixM$ and $\matrixL$ from~\eqref{eq:blockfrom} as well as the block-diagonal definition of $\matrixQ[\mm_h^n]$, where the blocks $\matrixT_n\matrixH[\matrixT_n\mm_h^n(z_i)]$ have orthonormal columns for all nodes $z_i \in \NN_h$.
\end{proof}

\subsection{Practical computation of $\matrixT_n$}\label{section:idealaxis}
With $\gamma > 0$ being as large as possible, Theorem~\ref{theorem:theoretical_convergence}~{\rm (ii)}, Corollary~\ref{corollary:stationaryprecond}, and Theorem~\ref{theorem:practical_convergence}~{\rm (ii)} require that
\begin{align}\label{eq:ideal_axis}
1 + \mm_h^n(z) \cdot \matrixT_n \vectorE_3 \geq \gamma > 0 \quad \textrm{for all nodes } z \in \NN_h.
\end{align}
For given $\mm_h^n \in \magnetizationset_h$, we thus aim to choose the matrix $\matrixT_n \in \R^{3 \times 3}$ from~\eqref{eq:thereplacement} such that $\gamma$ is large. To this end, define $d_{\ell}^{\star} \in [0,2]$ by
\begin{subequations}\label{eq:define_ds}
\begin{align}
d_{\ell}^{+} & := 1 - \max_{z \in \NN_h} (\mm_h^n(z))_{\ell} ,
\quad  d_{\ell}^{-} := 1 + \, \min_{z \in \NN_h} (\mm_h^n(z))_{\ell},
\quad \textrm{for all }  \ell \in \{1,2,3\}.
\end{align}
\end{subequations}
In addition, define $\matrixT_{\ell}^{\star} \in \R^{3 \times 3}$ with $\matrixT_n = \matrixT_n^{-1} = \matrixT_n^T$ by
\begin{subequations}\label{eq:define_Ts}
\begin{align}
\matrixT_1^+ & := [-\vectorE_3,\vectorE_2,-\vectorE_1], 
\quad
\matrixT_1^- := [\vectorE_3,\vectorE_2,\vectorE_1], \\
\matrixT_2^+ &:= [\vectorE_1,-\vectorE_3,-\vectorE_2], 
\quad
\matrixT_2^- := [\vectorE_1,\vectorE_3,\vectorE_2], \\ 
\matrixT_3^+ &:= [\vectorE_1,\vectorE_2,-\vectorE_3], 
\quad \,\,\,\,\,
\matrixT_3^- := [\vectorE_1,\vectorE_2,\vectorE_3].
\end{align}
\end{subequations}
For all $\ell \in \{1,2,3\}$ and $\star \in \{ + , - \}$, it holds that
\begin{align}\label{eq:gamma_dl}
1 + \mm_h^n(z) \cdot \matrixT_{\ell}^{\star} \vectorE_3 
\, \geq \, d_{\ell}^{\star} \, \in \, [0,2] \quad \textrm{for all nodes } z \in \NN_h.
\end{align}
Hence,~\eqref{eq:ideal_axis} holds with $\gamma \in [0,2]$ being the maximum $d_{\ell}^{\star}$, and $\matrixT_n = \matrixT_{\ell}^{\star}$ in~\eqref{eq:define_Ts}. We note that other (more involved) strategies are possible. Finally, we note that our construction leads to $\gamma = 0$, if and only if 
\begin{align*}
\{ \pm \vectorE_1, \pm \vectorE_2, \pm \vectorE_3 \} \subset \{ \mm_h^n(z) : z \in \NN_h \}.
\end{align*}

\section{Numerics} \label{section:numerics}

In this section, we underpin our theoretical findings with numerical experiments. To this end, we employ our 
\texttt{C++}-code for computational micromagnetics, which is based on the FEM library \texttt{NGSolve}~\cite{ngs}. For BEM computations, we employ the \texttt{BEM++} library \cite{SBA+15}. Moreover, we couple \texttt{NGSolve} and \texttt{BEM++} via \texttt{ngbem}~\cite{Rieder}.

We always employ the first-order tangent plane scheme, i.e., Algorithm~\ref{alg:abtps} with $\weight_k$, $\beta$, and $\boldsymbol{L}_h(\mm_h^n,\mm_h^{n-1})$ as in~\eqref{eq:TPS1}, where, in particular, we always set $\Theta = 1$ in~\eqref{eq:TPS1_beta}.

To solve the (preconditioned) linear system~\eqref{eq:linearsystem:constrainedQ}, we employ the (preconditioned) GMRES algorithm~\cite{SS86,Saad03}. Our implementation is based on the template routine from \texttt{Netlib}~\cite{gmres}, where we employ the iteration tolerance $\varepsilon = 10^{-14}$. To save memory, we restart GMRES after every $200$ iterations. Note that this is commonly referred to as restarted GMRES; cf., e.g.,~\cite[Algorithm~6.11]{Saad03}. As initial value for the GMRES iteration, we always choose $\vectorX_0 = \0$.

The experiments of this section focus on the (possibly $h$-independent) iteration numbers of the preconditioned GMRES algorithm. To evaluate the inverse matrices in the preconditioners from Section~\ref{section:precond} (e.g., in~\eqref{eq:forma_constrained_preconditioner}), we always solve the corresponding linear system via Gaussian elimination.

\begin{remark}\label{remark:complexity}
The corresponding 'inverse' matrices in the stationary preconditioner $\matrixP^{\textrm{2D}}$ from~\eqref{eq:defP2D} as well as the practical preconditioner $\widetilde{\matrixP}_{\matrixQ}[\mm_h^n]$ from~\eqref{eq:pracital_constrained_preconditioner} are block-diagonal matrices and consist of the same $N \times N$-stationary matrix block. Moreover, $\matrixQ[\mm_h^n]$ from~\eqref{eq:defQ} in $\widetilde{\matrixP}_{\matrixQ}[\mm_h^n]$ has a block-diagonal form and is explicitly available at each time-step. Hence, $\matrixP^{\textrm{2D}}$ and $\widetilde{\matrixP}_{\matrixQ}[\mm_h^n]$ have a similar computational complexity. In contrast to that, the theoretical preconditioner $\matrixP_{\matrixQ}[\mm_h^n]$ from~\eqref{eq:forma_constrained_preconditioner} has to be rebuilt at every time-step.
\end{remark}

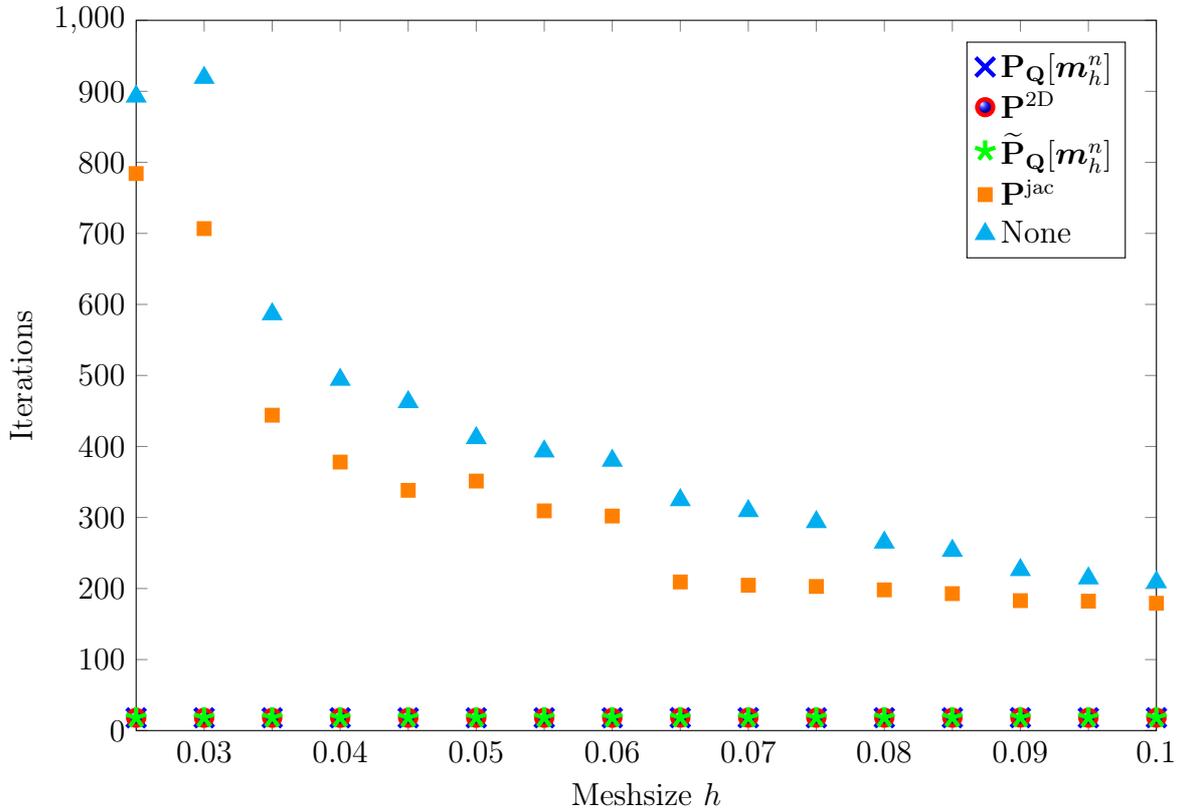
\begin{figure}
\begin{tikzpicture}
\pgfplotstableread{plots/cube/iterations.dat}{\data}
\begin{axis}[
height = 110mm,
width = 150mm,
xlabel={Meshsize $h$},
ylabel={Iterations},
xmin=0.025,
xmax=0.1,
xtick={0.025,0.03,0.035,0.04,0.045,0.05,0.055,0.06,0.065,0.07,0.075,0.08,0.085,0.09,0.095,0.1},
xticklabels={,0.03,,0.04,,0.05,,0.06,,0.07,,0.08,,0.09,,0.1},
ymin=0,
ymax=1000,
legend pos=north east,
every axis legend/.append style={nodes={right}}],
]
\addplot[only marks, blue, mark=x, mark size=5,ultra thick] table[x=h, y=implicit] {\data};
\addplot[only marks, red, mark=ball, mark size=3,ultra thick] table[x=h, y=twoD] {\data};
\addplot[only marks, green, mark=star, mark size=4,ultra thick] table[x=h, y=explicit] {\data};
\addplot[only marks, orange, mark=square*, ultra thick] table[x=h, y=jacobi] {\data};
\addplot[only marks, cyan, mark=triangle*, mark size=3,ultra thick] table[x=h, y=none] {\data};
\legend{$\matrixP_{\matrixQ}[\mm_h^n]$, $\matrixP^{\textrm{2D}}$, $\widetilde{\matrixP}_{\matrixQ}[\mm_h^n]$, $\matrixP^{\textrm{jac}}$, None};
\end{axis}
\end{tikzpicture}
\caption{Experiment of Section~\ref{subsection:cube}: Average number of GMRES iterations.}
\label{fig:academic}
\end{figure}

%
\subsection{An academic example}\label{subsection:cube}
We investigate the iteration number for different choices $\matrixP$ in~\eqref{eq:forma_constrained_preconditioner} with respect to the mesh-size $h$. To this end, we adapt the setting of~\cite[Section 6.1]{PRS17}: On $\Omega := (0,1)^3$, we employ the initial value and applied field
\begin{align*}
\mm^0 := (1,0,0)^T 
\quad \textrm{and} \quad 
\ff(x_1,x_2,x_3) := 10 \, (\sin (x_1) , \cos (x_1) ,0)^T,
\end{align*}
respectively. We set $T=1$, $\alpha = 0.5$, $\ellex^2 = 10 \equiv \beta(k)$; see also~\eqref{eq:TPS1_beta}. The lower-order contributions $\boldsymbol{\pi}$ consist only of the stray field. With this configuration, the magnetization is expected to align itself in the direction of (the non-constant) applied field $\ff$. For time discretization, we fix $k=10^{-2}$. For space discretization, we employ the meshes generated by the \texttt{NGS/Py}~\cite{ngs} embedded module {\tt Netgen} with the mesh-sizes
\begin{align*}
h \, \in \, \big\{ \, \frac{j}{2} \, \cdot10^{-2} : j = 5,6,\dots,20 \, \big\}.
\end{align*}
We employ the different preconditioners from Section~\ref{section:precond} with $\alpha_{\matrixP} = 1$ or do not use preconditioning for the iterative solution of the underlying linear system~\eqref{eq:linearsystem:constrainedQ}. Moreover, we always fix $\matrixT_n := \matrixI_{3 \times 3}$, i.e., we always employ the standard choice~\eqref{eq:standard_choice}.

In Figure~\ref{fig:academic}, we plot the average number of GMRES iterations. As expected, no preconditioning (None) requires the most iterations. The Jacobi preconditioner $\matrixP^{\textrm{jac}}$ brings a slight improvement. However, (None) and $\matrixP^{\textrm{jac}}$ are both not robust with respect to the mesh-size $h$. In contrast to that, the theoretical ($\matrixP_{\matrixQ}[\mm_h^n]$), the stationary ($\matrixP^{\textrm{2D}}$), and the practical ($\widetilde{\matrixP}_{\matrixQ}[\mm_h^n]$) preconditioner from Section~\ref{subsection:theoreticalprecond}--\ref{subsection:practicalprecond}, respectively, require significantly less iterations. Moreover, all three latter options are robust with respect to the mesh-size $h$. However, $\matrixP_{\matrixQ}[\mm_h^n]$ is computationally more expensive than $\matrixP^{\textrm{2D}}$ and $\widetilde{\matrixP}_{\matrixQ}[\mm_h^n]$; cf. Remark~\ref{remark:complexity}.
In conclusion, the latter experiment suggests to use either the stationary preconditioner $\matrixP^{\textrm{2D}}$ or the practical preconditioner $\widetilde{\matrixP}_{\matrixQ}[\mm_h^n]$.

\begin{figure}

\centering
\begin{subfigure}{0.32\textwidth}
\scalebox{0.4}{
\begin{tikzpicture}
\pgfplotstableread{plots/mumag4/Alpha/itnr.dat}{\data}
\begin{axis}[
width = 120mm,
xlabel={Physical time (\si{\nano\second})},
ylabel={Iterations},
xmin=0.0,
xmax=3,
ymin=0,
ymax=700,
legend style={at={(axis cs:0.2,220)},anchor=south west,nodes={right}},
]
\addplot[blue, mark size=1,ultra thick] table[x=t, y=itnr_implicit] {\data};
\addplot[red, mark size=1,ultra thick] table[x=t, y=itnr_2D] {\data};
\addplot[green, mark size=1,ultra thick, dashed] table[x=t, y=itnr_explicit] {\data};
\addplot[orange, mark size=1,ultra thick] table[x=t, y=itnr_jacobi] {\data};
\addplot[cyan, mark size=1,ultra thick] table[x=t, y=itnr_none] {\data};
\legend{$\matrixP_{\matrixQ}[\mm_h^n]$, $\matrixP^{\textrm{2D}}$, $\widetilde{\matrixP}_{\matrixQ}[\mm_h^n]$, $\matrixP^{\textrm{jac}}$, None};
\end{axis}
\end{tikzpicture}
}
\caption{$\alpha = 0.02$, $\alpha_{\matrixP} = 1$}
\end{subfigure}
\begin{subfigure}{0.32\textwidth}
\scalebox{0.4}{
\begin{tikzpicture}
\pgfplotstableread{plots/mumag4/Alpha/itnr_bigalpha.dat}{\data}
\begin{axis}[
width = 120mm,
xlabel={Physical time (\si{\nano\second})},
ylabel={Iterations},
xmin=0.0,
xmax=3,
ymin=0,
ymax=200,
legend style={at={(axis cs:2.2,80)},anchor=south west,nodes={right}},
]
\addplot[blue, mark size=1,ultra thick] table[x=t, y=itnr_implicit_BA] {\data};
\addplot[red, mark size=1,ultra thick] table[x=t, y=itnr_2D_BA] {\data};
\addplot[green, mark size=1,ultra thick, dashed] table[x=t, y=itnr_explicit_BA] {\data};
\addplot[orange, mark size=1,ultra thick] table[x=t, y=itnr_jacobi_BA] {\data};
\addplot[cyan, mark size=1,ultra thick] table[x=t, y=itnr_none_BA] {\data};
\end{axis}
\end{tikzpicture}
}
\caption{$\alpha = \alpha_{\matrixP} = 1$}
\end{subfigure}
\begin{subfigure}{0.32\textwidth}
\scalebox{0.4}{
\begin{tikzpicture}
\pgfplotstableread{plots/mumag4/Alpha/itnr_smallalphaP.dat}{\data}
\begin{axis}[
width = 120mm,
xlabel={Physical time (\si{\nano\second})},
ylabel={Iterations},
xmin=0.0,
xmax=3,
ymin=0,
ymax=1400,
]
\addplot[blue, mark size=1,ultra thick] table[x=t, y=itnr_implicit_SAP] {\data};
\addplot[red, mark size=1,ultra thick] table[x=t, y=itnr_2D_SAP] {\data};
\addplot[green, mark size=1,ultra thick, dashed] table[x=t, y=itnr_explicit_SAP] {\data};
\addplot[orange, mark size=1,ultra thick] table[x=t, y=itnr_jacobi_SAP] {\data};
\addplot[cyan, mark size=1,ultra thick] table[x=t, y=itnr_none_SAP] {\data};
\end{axis}
\end{tikzpicture}
}
\caption{$\alpha = \alpha_{\matrixP} = 0.02$}
\end{subfigure}
\caption{Experiment of Section~\ref{subsection:mumag4}: GMRES iterations over time.}
\label{fig:mumag4_itnr_dpr}
\end{figure}
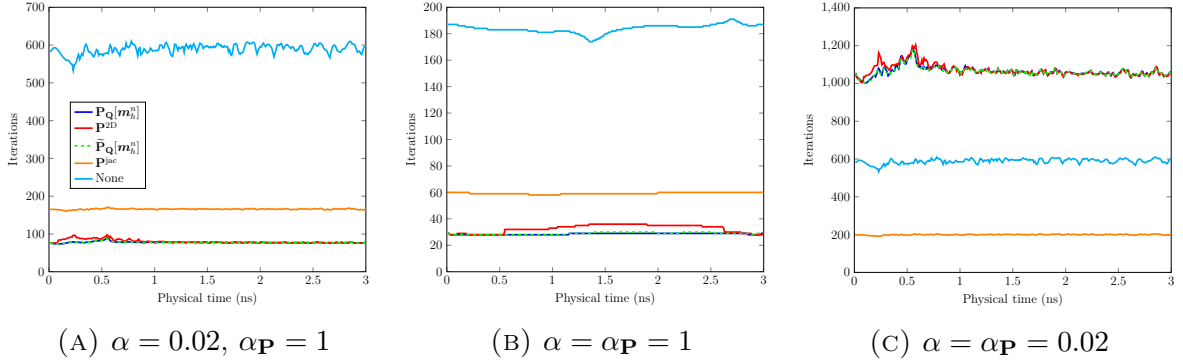

%
\subsection{$\boldsymbol\mu$MAG standard problem \#4}\label{subsection:mumag4}
We investigate the practical applicability of the different choices for the preconditioner $\matrixP$ from Section~\ref{section:precond} to a physically relevant example. To this end, we consider the $\mu$-MAG standard problem \#4~\cite{mumag}, which simulates the switching of the magnetization in a thin permalloy layer
\begin{align*}
\Omega := ( -250 \si{\nano\meter},250 \si{\nano\meter}) \times ( -62.5 \si{\nano\meter}, 62.5\si{\nano\meter}) \times ( -1.5 \si{\nano\meter},1.5 \si{\nano\meter}).
\end{align*}
The dynamics is described by LLG (stated in physical SI units), which reads
\begin{subequations}\label{eq:LLG_physical}
\begin{align}
\partial_t \MM &= -\gamma_0 \, \MM \times \mathbf{H}_{\textrm{eff}}(\MM) \, + \, \frac{\alpha}{M_s} \, \MM \times \partial_t \MM & &\textrm{ in } \left(0,T\right) \times \Omega =: \Omega_T, \\
\partial_{\mathbf{n}} \MM &= \0 && \textrm{ on } \left(0,T\right) \times \partial \Omega, \\
\MM(0) &= \MM^0 && \textrm{ in } \Omega. \label{eq:LLG_physical3}
\end{align}
Here, $\gamma_0 =2.21 \cdot 10^5\si{\newton}/\si{\ampere}^2$ is the gyromagnetic ratio $M_{\mathrm{s}}:= 8.0\cdot 10^5\si{\ampere}/\si{\meter}$ is the saturation magnetization, and the sought magnetization $\MM$ satisfies $|\MM|=M_s$ a.e.\ in $\Omega_T$. Moreover, we employ the physical end time $T := 3 \si{\nano\second}$. The physical effective field reads
\begin{align}
\mathbf{H}_{\textrm{eff}}(\MM) \, := \, \frac{2A}{\mu_0 M_s^2} \, \Delta \MM \, + \, \ppi(\MM) \, + \, \mathbf{H}_{\textrm{ext}}.
\end{align}
Here, $A := 1.3\cdot 10^{-11} \si{\joule\per\meter}$ is the exchange constant of permalloy, $\mu_0=4 \pi \cdot 10^{-7} \si{\newton/\ampere^2}$ is the permeability of vacuum, $\ppi(\MM)$ consists only of the stray field, and
\begin{align}
\mu_0 \mathbf{H}_{\textrm{ext}} \, := \, (-35.5  , -6.3 , 0 )^T \, \si{\milli\tesla}
\end{align}
\end{subequations}
is a constant applied field corresponding to the second $\mu$-MAG \#4 configuration.

Our numerical simulation is based on the non-dimensional form~\eqref{eq:LLG} of~\eqref{eq:LLG_physical}: With $L = 10^{-9}$ and the temporal and spatial rescaling $t \mapsto (\gamma_0 M_s t)/L$ and $\xx \mapsto \xx/L$, respectively, the dimensionless unknown $\mm := \MM /M_s$ satisfies LLG~\eqref{eq:LLG} in the (not relabelled) domain
\begin{align*}
\Omega := (-250,250) \times (-62.5,62.5) \times (-1.5,1.5),
\end{align*}
and the parameters
\begin{align}\label{eq:ellex_mumag4}
\ellex^2 = \frac{2A}{\mu_0 M_{\mathrm{s}}^2 L^2} \approx 32.3283
\quad \textrm{and} \quad 
\ff := (-28250, -5013.4, 0 )^T.
\end{align}

For time and space discretization, we choose the physical time-step size $\Delta t = 0.1 \si{\pico\second}$ and the physical mesh-size $\Delta \xx = 5\si{\nano\meter}$. With the above rescaling, we use Algorithm~\ref{alg:abtps} with the actual numerical discretization parameters
\begin{align}\label{eq:rescaling}
k = \gamma_0 M_{\mathrm{s}} \Delta t \approx 0.017688, \quad \textrm{and} \quad h = \Delta x / L = 5. 
\end{align}
We employ the corresponding mesh generated by the \texttt{NGS/Py}~\cite{ngs} embedded module {\tt Netgen}, which consists of $17478$ elements and $6073$ nodes. We employ the preconditioners from Section~\ref{section:precond} or do not use preconditioning for the iterative solution of the underlying linear system~\eqref{eq:linearsystem:constrainedQ}. Moreover, we employ the adaptive strategy for $\matrixT_n$ from Section~\ref{section:idealaxis}.

\subsubsection{$\mu$-MAG \#4 configuration}\label{subsection:standard_mumag4}
 As specified by $\mu$-MAG \#4, we choose $\alpha = 0.02$ for the Gilbert damping constant. For preconditioning, we choose $\alpha_{\matrixP}=1$. In Figure~\ref{fig:mumag4_itnr_dpr}, we plot the required GMRES iterations for different preconditioners over time. No preconditioning (None) requires the most iterations. The theoretical ($\matrixP_{\matrixQ}[\mm_h^n]$), the stationary ($\matrixP^{\textrm{2D}}$), the practical ($\widetilde{\matrixP}_{\matrixQ}[\mm_h^n]$), and the Jacobi preconditioner ($\matrixP^{\textrm{jac}}$) significantly improve the iteration number. Here, $\matrixP_{\matrixQ}[\mm_h^n]$, $\matrixP^{\textrm{2D}}$, and $\widetilde{\matrixP}_{\matrixQ}[\mm_h^n]$ require the fewest iterations and $\widetilde{\matrixP}_{\matrixQ}[\mm_h^n]$ performs even slightly better than $\matrixP_{\matrixQ}[\mm_h^n]$ and $\matrixP^{\textrm{2D}}$. Moreover, recall that $\widetilde{\matrixP}_{\matrixQ}[\mm_h^n]$ and $\matrixP^{\textrm{2D}}$ have similar computational complexity, while $\matrixP_{\matrixQ}[\mm_h^n]$ has to be rebuilt at every time-step; cf. Remark~\ref{remark:complexity}. This experiment suggests either the stationary preconditioner $\matrixP^{\textrm{2D}}$ or the practical preconditioner $\widetilde{\matrixP}_{\matrixQ}[\mm_h^n]$. In the following three subsections, we extend this experiment to discuss the roles of $\alpha$ (see Section~\ref{subsection:alpha}), $\matrixT_n$ (see Section~\ref{subsection:mumag4_axis}), and $\alpha_{\matrixP}$ (see Section~\ref{subsection:choosealphaP}).

\subsubsection{The impact of $\alpha$}\label{subsection:alpha}
We repeat the experiment from Section~\ref{subsection:standard_mumag4} with $\alpha = \alpha_{\matrixP} = 1$. The new setting still simulates the switching dynamics of $\mu$-MAG \#4, but with a bigger Gilbert damping constant $\alpha$. In Figure~\ref{fig:mumag4_itnr_dpr}, we plot the required GMRES iterations for different preconditioners over time. Compared to the original Figure~\ref{subsection:standard_mumag4} with $\alpha = 0.02$ and $\alpha_{\matrixP} = 1$, all approaches require less iterations. 
This is in accordance with the results from Section~\ref{section:precond}, where larger $\alpha$ leads to better contraction of the residuals; cf., e.g.,~\eqref{eq:estimate:theoretical_1}.

\begin{figure}
\begin{tikzpicture}
\pgfplotstableread{plots/mumag4/Alpha/average_alphaP.dat}{\data}
\begin{axis}[
height = 60mm,
width = 150mm,
xlabel={$\alpha_\matrixP$},
ylabel={Iterations},
xmin=0.0,
xmax=4.0,
ymin=0,
ymax=300,
legend pos=north east,
every axis legend/.append style={nodes={right}}],
]
\addplot[blue, mark=x, mark size=3,ultra thick] table[x=alphaP, y=twoD] {\data};
\addplot[red, mark=x, mark size=3,ultra thick] table[x=alphaP, y=explicit] {\data};
\legend{$\matrixP_{\matrixQ}^{\textrm{2D}}$,$\widetilde{\matrixP}[\mm_h^n]$};
\end{axis}
\end{tikzpicture}
\caption{Experiment of Section~\ref{subsection:choosealphaP}: Average number of GMRES iterations ($\alpha = 0.02$).}
\label{fig:mumag4_alphaP}
\end{figure}
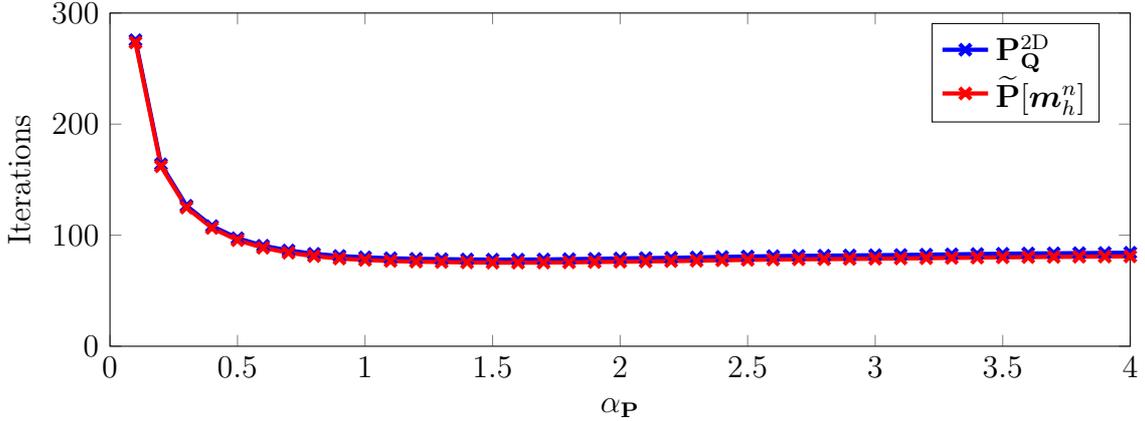

%
\subsubsection{The impact of $\alpha_{\matrixP}$}\label{subsection:choosealphaP}
First, we repeat the experiment with $\alpha_{\matrixP} = \alpha = 0.02$. Unlike Section~\ref{subsection:standard_mumag4} and Section~\ref{subsection:alpha}, we observe the following: In Figure~\ref{fig:mumag4_itnr_dpr}, we plot the required GMRES iterations for different preconditioners over time. The Jacobi-preconditioner ($\matrixP^{\textrm{jac}}$) requires less iterations than no precondioning (None). The theoretical ($\matrixP_{\matrixQ}[\mm_h^n]$), the stationary ($\matrixP^{\textrm{2D}}$), and the practical preconditioner ($\widetilde{\matrixP}_{\matrixQ}[\mm_h^n]$) fail completely. All three require significantly more GMRES iterations than no preconditioning (None). This might be due to the skew-symmetric part $\matrixS[\mm_h^n]$ in the (yet unconstrained system) matrix $\matrixA_k[\mm_h^n]$ from~\eqref{eq:systemmatrix}. Here, $\matrixS[\mm_h^n]$ is similar to a mass matrix, but unlike $\MM_k[\mm_h^n]$, it lacks the factor $\alpha$. To empirically determine a good choice for $\alpha_{\matrixP}$, we repeat our current experiment with the fixed preconditioners $\matrixP^{\textrm{2D}}$ and $\widetilde{\matrixP}_{\matrixQ}[\mm_h^n]$, and vary $\alpha_{\matrixP}$. In Figure~\ref{fig:mumag4_alphaP}, we plot the average number of the required GMRES iterations. As already observed in Figure~\ref{fig:mumag4_itnr_dpr}, bigger values of $\alpha_{\matrixP}$ result in significantly less iterations. However, $\alpha_{\matrixP}$ bigger than $1$, has little to no effect. In conclusion, we suggest to always choose $\alpha_{\matrixP} = 1$.

\vspace*{-1mm}
\subsubsection{Adaptive vs.\ fixed $\matrixT_n$}\label{subsection:mumag4_axis}
\begin{figure}
\hspace{-0.85cm}\begin{tikzpicture}
\pgfplotstableread{plots/mumag4/Adaptive/choice1.dat}{\choiceA}
\pgfplotstableread{plots/mumag4/Adaptive/choice2.dat}{\choiceB}
\pgfplotstableread{plots/mumag4/Adaptive/choice3.dat}{\choiceC}
\pgfplotstableread{plots/mumag4/Adaptive/choice4.dat}{\choiceD}
\pgfplotstableread{plots/mumag4/Adaptive/choice5.dat}{\choiceE}
\pgfplotstableread{plots/mumag4/Adaptive/choice6.dat}{\choiceF}
\begin{axis}[
height = 20mm,
width = 150mm,
xmin=0,
xmax=3,
xtick=\empty,
ymin=1,
ymax=1,
ytick=\empty,
]
\addplot[only marks,red, mark size=1,thick] table[x=t1, y expr={1}] {\choiceA};
\addplot[only marks,green, mark size=1,thick] table[x=t2, y expr={1}] {\choiceB};
\addplot[only marks,orange, mark size=1,thick] table[x=t3, y expr={1}] {\choiceC};
\addplot[only marks,cyan, mark size=1,thick] table[x=t4, y expr={1}] {\choiceD};
\addplot[only marks,violet, mark size=1,thick] table[x=t5, y expr={1}] {\choiceE};
\addplot[only marks,brown, mark size=1,thick] table[x=t6, y expr={1}] {\choiceF};
\end{axis}
\end{tikzpicture}
\begin{tikzpicture}
\pgfplotstableread{plots/mumag4/Adaptive/gamma_and_ds.dat}{\data}
\begin{axis}[
height = 65mm,
width = 150mm,
xlabel={Phyisical time (\si{\nano\second})},
ylabel={},
xmin=0.0,
xmax=3,
ymin=0,
ymax=2,
legend pos=outer north east,
every axis legend/.append style={nodes={right}}],
]
\addplot[blue, mark size=1 ,ultra thick, dashed] table[x=t, y=gamma] {\data};
\addplot[red, mark size=1,thick] table[x=t, y=gamma_maxx] {\data};
\addplot[green, mark size=1,thick] table[x=t, y=gamma_maxy] {\data};
\addplot[orange, mark size=1,thick] table[x=t, y=gamma_maxz] {\data};
\addplot[cyan, mark size=1,thick] table[x=t, y=gamma_minx] {\data};
\addplot[violet, mark size=1,thick] table[x=t, y=gamma_miny] {\data};
\addplot[brown, mark size=1,thick] table[x=t, y=gamma_minz] {\data};
\legend{$d_{\textrm{adapt}}$, $d_1^+$, $d_2^+$, $d_3^+$, $d_1^-$, $d_2^-$, $d_3^-$};
\end{axis}
\end{tikzpicture}
\caption{Experiment of Section~\ref{subsection:mumag4_axis}: Evolution of $d_{\ell}^\star$ with $\ell \in \{1,2,3\}$ and $\star \in \{+,-\}$, and $d_{\textrm{adapt}} := \max_{\ell \in \{1,2,3\}} \{ d_{\ell}^+ , d_{\ell}^- \}$ over time ($\alpha = 0.02$, $\alpha_{\matrixP} = 1$). The above line indicates the current state of $d_{\textrm{adapt}}$.}
\label{fig:gamma_and_ds}
\end{figure}
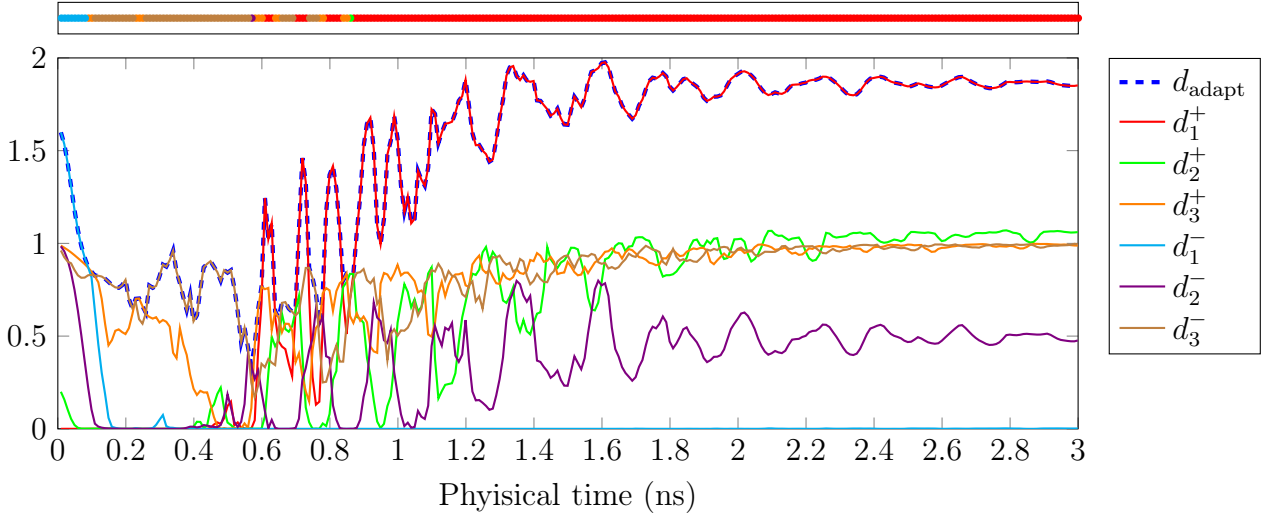

\begin{figure}
\begin{tikzpicture}
\pgfplotstableread{plots/mumag4/Adaptive/itnr_axis.dat}{\data}
\begin{axis}[
height = 100mm,
width = 150mm,
xlabel={Physical time (\si{\nano\second})},
ylabel={Iterations},
xmin=0.0,
xmax=3,
ymin=70,
ymax=200,
legend pos= north east,
every axis legend/.append style={nodes={right}}],
]
\addplot[only marks, mark size=0.2, blue,ultra thick] table[x=t, y=itnr_adaptive] {\data};
\addplot[red, mark size=1,thick, dashed] table[x=t, y=itnr_T1plus] {\data};
\addplot[green, mark size=1,thick, dashed] table[x=t, y=itnr_T2plus] {\data};
\addplot[orange, mark size=1,thick] table[x=t, y=itnr_T3plus] {\data};
\addplot[cyan, mark size=1,thick, dashed] table[x=t, y=itnr_T1minus] {\data};
\addplot[violet, mark size=1,thick, dashed] table[x=t, y=itnr_T2minus] {\data};
\addplot[brown, mark size=1,thick] table[x=t, y=itnr_T3minus] {\data};
\legend{Adaptive, $\matrixT_1^+$, $\matrixT_2^+$, $\matrixT_3^+$, $\matrixT_1^-$, $\matrixT_2^-$, $\matrixT_3^-$};
\end{axis}
\end{tikzpicture}
\caption{Experiment of Section~\ref{subsection:mumag4_axis}: GMRES iterations for the stationary preconditioner $\matrixP^{\textrm{2D}}$ over time ($\alpha = 0.02$, $\alpha_{\matrixP} = 1$).}
\label{fig:mumag4_choices_itnr}
\end{figure}

\begin{figure}
\begin{tikzpicture}
\pgfplotstableread{plots/mumag4/Adaptive/itnr_axis_EXPL.dat}{\data}
\begin{axis}[
height = 65mm,
width = 120mm,
xlabel={Physical time (\si{\nano\second})},
ylabel={Iterations},
xmin=0.0,
xmax=3,
ymin=70,
ymax=90,
legend pos=outer north east,
every axis legend/.append style={nodes={right}}],
]
\addplot[only marks, mark size=0.2, blue,ultra thick] table[x=t, y=itnr_adaptive] {\data};
\addplot[red, mark size=1,thick] table[x=t, y=itnr_T1plus] {\data};
\addplot[green, mark size=1,thick] table[x=t, y=itnr_T2plus] {\data};
\addplot[orange, mark size=1,thick] table[x=t, y=itnr_T3plus] {\data};
\addplot[cyan, mark size=1,thick] table[x=t, y=itnr_T1minus] {\data};
\addplot[violet, mark size=1,thick] table[x=t, y=itnr_T2minus] {\data};
\addplot[brown, mark size=1,thick] table[x=t, y=itnr_T3minus] {\data};
\legend{Adaptive, $\matrixT_1^+$, $\matrixT_2^+$, $\matrixT_3^+$, $\matrixT_1^-$, $\matrixT_2^-$, $\matrixT_3^-$};
\end{axis}
\end{tikzpicture}
\caption{Experiment of Section~\ref{subsection:mumag4_axis}: GMRES iterations for the practical preconditioner $\widetilde{\matrixP}_{\matrixQ}[\mm_h^n]$ over time ($\alpha = 0.02$, $\alpha_{\matrixP} = 1$).}
\label{fig:mumag4_choices_itnr_EXPL}
\end{figure}

We extend the experiment from Section~\ref{subsection:standard_mumag4} to discuss the impact of the adaptive strategy for $\matrixT_n \in \R^{3\times3}$ from Section~\ref{section:idealaxis}. In Figure~\ref{fig:gamma_and_ds}, we plot the evolution of $d_{\ell}^{\star} \in [0,2]$ from~\eqref{eq:define_ds}, where $\ell \in \{1,2,3\}$ and $\star \in \{ + , - \}$ and $d_{\textrm{adapt}} := \max_{\ell \in \{1,2,3\} } \{ d_{\ell}^+ , d_{\ell}^- \}$. Recall from~\eqref{eq:gamma_dl} that we can choose $\gamma = d_{\textrm{adapt}}$ for adaptive $\matrixT_n$ and $\gamma = d_{\ell}^{\star}$ for fixed $\matrixT_n := \matrixT_{\ell}^{\star}$. For adaptive $\matrixT_n$, we always have in our example that $1 + (\matrixT_n\mm_h^n(z))_3 \geq \gamma > 0$, i.e., Corollary~\ref{corollary:stationaryprecond} and Theorem~\ref{theorem:practical_convergence}~{\rm (ii)} apply.

In Figure~\ref{fig:mumag4_choices_itnr}, we consider the stationary preconditioner $\matrixP^{\textrm{2D}}$. We plot the evolution of the GMRES iteration numbers with the corresponding $\matrixT_n$ (Adaptive) as well as fixed $\matrixT_n := \matrixT_{\ell}^{\star}$ from~\eqref{eq:define_Ts}, where $\ell \in \{1,2,3\}$ and $\star \in \{ + , - \}$. In Figure~\ref{fig:mumag4_choices_itnr_EXPL}, we repeat this experiment with the practical preconditioner $\widetilde{\matrixP}_{\matrixQ}[\mm_h^n]$ instead of $\matrixP^{\textrm{2D}}$. 

Adaptive $\matrixT_n$ is not always the best choice, however, it avoids the increased iteration number of a fixed $\matrixT_n$; see Figure~\ref{fig:mumag4_choices_itnr}. Yet, for the relation of the iteration number of fixed $\matrixT_n$ and the corresponding $d_{\ell}^{\star}$, the picture is not complete: In Figure~\ref{fig:mumag4_choices_itnr_EXPL}, all options appear to be equally good, even though, e.g., for fixed $\matrixT_n := \matrixT_1^-$, it holds that $d_1^-\approx 0$ most of the time; see Figure~\ref{fig:gamma_and_ds}. 
In conclusion, our experiment suggests to use adaptive $\matrixT_n$. However, a full understanding of the effect of the choice of $\matrixT_n$ might require further work.

\begin{figure}

\centering

\begin{subfigure}{0.32\textwidth}
\scalebox{0.4}{
\begin{tikzpicture}
\pgfplotstableread{plots/mumag5/Alpha/itnr.dat}{\data}
\begin{axis}[
width = 120mm,
xlabel={Physical time (\si{\nano\second})},
ylabel={Iterations},
xmin=0.0,
xmax=8,
ymin=0,
ymax=400,
legend style={at={(axis cs:0.5,140)},anchor=south west,nodes={right}},
every axis legend/.append style={nodes={right}}],
scaled ticks=false, tick label style={/pgf/number format/fixed},
]
\addplot[blue, mark size=1,ultra thick] table[x=t, y=itnr_implicit] {\data};
\addplot[red, mark size=1,ultra thick] table[x=t, y=itnr_2D] {\data};
\addplot[green, mark size=1,ultra thick, dashed] table[x=t, y=itnr_explicit] {\data};
\addplot[orange, mark size=1,ultra thick] table[x=t, y=itnr_jacobi] {\data};
\addplot[cyan, mark size=1,ultra thick] table[x=t, y=itnr_none] {\data};
\legend{$\matrixP_{\matrixQ}[\mm_h^n]$,$\matrixP^{\textrm{2D}}$, $\widetilde{\matrixP}_{\matrixQ}[\mm_h^n]$, $\matrixP^{\textrm{jac}}$, None};
\end{axis}
\end{tikzpicture}
}
\caption{$\alpha = 0.1$, $\alpha_{\matrixP} = 1$}
\end{subfigure}
\begin{subfigure}{0.32\textwidth}
\scalebox{0.4}{
\begin{tikzpicture}
\pgfplotstableread{plots/mumag5/Alpha/itnr_bigalpha.dat}{\data}
\begin{axis}[
width = 120mm,
xlabel={Physical time (\si{\nano\second})},
ylabel={Iterations},
xmin=0.0,
xmax=8,
ymin=0,
ymax=400,
legend style={at={(axis cs:0.5,140)},anchor=south west,nodes={right}},
]
\addplot[blue, mark size=1,ultra thick] table[x=t, y=itnr_implicit_BA] {\data};
\addplot[red, mark size=1,ultra thick] table[x=t, y=itnr_2D_BA] {\data};
\addplot[green, mark size=1,ultra thick, dashed] table[x=t, y=itnr_explicit_BA] {\data};
\addplot[orange, mark size=1,ultra thick] table[x=t, y=itnr_jacobi_BA] {\data};
\addplot[cyan, mark size=1,ultra thick] table[x=t, y=itnr_none_BA] {\data};
\end{axis}
\end{tikzpicture}
}
\caption{$\alpha = \alpha_{\matrixP} = 1$}
\end{subfigure}
\begin{subfigure}{0.32\textwidth}
\scalebox{0.4}{
\begin{tikzpicture}
\pgfplotstableread{plots/mumag5/Alpha/itnr_smallalphaP.dat}{\data}
\begin{axis}[
width = 120mm,
xlabel={Physical time (\si{\nano\second})},
ylabel={Iterations},
xmin=0.0,
xmax=8,
ymin=0,
ymax=400,
]
\addplot[blue, mark size=1,ultra thick] table[x=t, y=itnr_implicit_SAP] {\data};
\addplot[red, mark size=1,ultra thick] table[x=t, y=itnr_2D_SAP] {\data};
\addplot[green, mark size=1,ultra thick, dashed] table[x=t, y=itnr_explicit_SAP] {\data};
\addplot[orange, mark size=1,ultra thick] table[x=t, y=itnr_jacobi_SAP] {\data};
\addplot[cyan, mark size=1,ultra thick] table[x=t, y=itnr_none_SAP] {\data};
\end{axis}
\end{tikzpicture}
}
\caption{$\alpha = \alpha_{\matrixP} = 0.1$}
\end{subfigure}
\caption{Experiment of Section~\ref{subsection:mumag5}: GMRES iterations over time.}
\label{fig:mumag5_itnr_variants}
\end{figure}
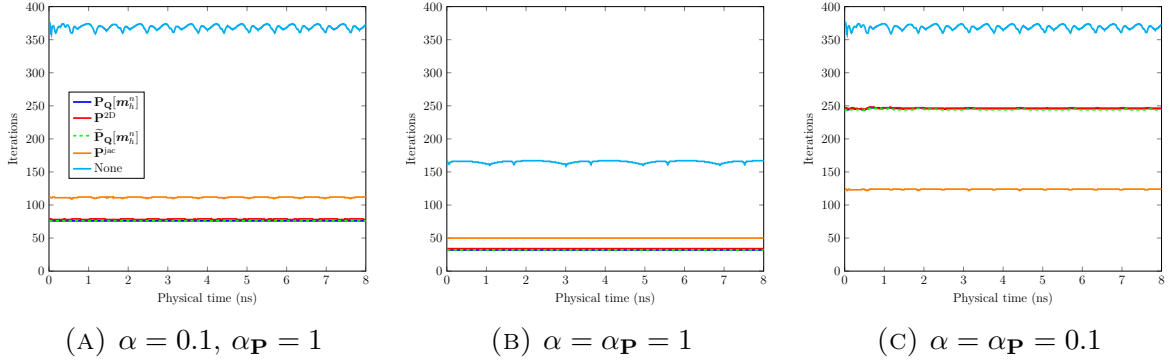

\begin{figure}
\begin{tikzpicture}
\pgfplotstableread{plots/mumag5/Adaptive/itnr_axis.dat}{\data}
\begin{axis}[
width = 100mm,
xlabel={Physical time (\si{\nano\second})},
ylabel={Iterations},
xmin=0.0,
xmax=8,
ymin=75,
ymax=105,
legend pos=outer north east,
every axis legend/.append style={nodes={right}}],
]
\addplot[only marks, mark size=0.2, blue,ultra thick] table[x=t, y=itnr_adaptive] {\data};
\addplot[red, mark size=1,thick, dashed] table[x=t, y=itnr_T1plus] {\data};
\addplot[green, mark size=1,thick, dashed] table[x=t, y=itnr_T2plus] {\data};
\addplot[orange, mark size=1,thick, dashed] table[x=t, y=itnr_T3plus] {\data};
\addplot[cyan, mark size=1,thick, dashed] table[x=t, y=itnr_T1minus] {\data};
\addplot[violet, mark size=1,thick, dashed] table[x=t, y=itnr_T2minus] {\data};
\addplot[brown, mark size=1,thick,dashed] table[x=t, y=itnr_T3minus] {\data};
\legend{Adaptive, $\matrixT_1^+$, $\matrixT_2^+$, $\matrixT_3^+$, $\matrixT_1^-$, $\matrixT_2^-$, $\matrixT_3^-$};
\end{axis}
\end{tikzpicture}
\caption{Experiment of Section~\ref{subsection:mumag5}: GMRES iterations for the stationary preconditioner $\matrixP^{\textrm{2D}}$ over time ($\alpha = 0.1$, $\alpha_{\matrixP} = 1$).}
\label{fig:mumag5_choices_itnr}
\end{figure}

%
\subsection{$\boldsymbol\mu$MAG standard problem \#5 }\label{subsection:mumag5}
As for $\mu$-MAG \#4 in Section~\ref{subsection:mumag4}, the the $\mu$-MAG standard problem~\#5~\cite{mumag} relies on a physical formulation similar to~\eqref{eq:LLG_physical} with the end time $T = 8 \si{\nano\second}$ and the domain
$\Omega := (-50 \si{\nano\meter},50 \si{\nano\meter}) \times (-50\si{\nano\meter},50\si{\nano\meter}) \times (-5\si{\nano\meter},5\si{\nano\meter}).$

The simulation is based on the non-dimensional form~\eqref{eq:LLG}, where we employ the same temporal and spatial rescaling as in Section~\ref{subsection:mumag4} with the saturation magnetization $M_{\mathrm{s}}:= 8.0\cdot 10^5\si{\ampere}/\si{\meter}$. In particular, we compute $\ellex^2$ as in~\eqref{eq:ellex_mumag4}, where $A := 1.3\cdot 10^{-11} \si{\joule\per\meter}$, $\mu_0=4 \pi \cdot 10^{-7} \si{\newton/\ampere^2}$, and $\alpha := 0.1$.
In extension to $\mu$Mag \#4, the $\mu$-MAG standard problem \#5 involves LLG~\eqref{eq:LLG} with an extended effective field 
\begin{align*}
\heff(\mm) := \ellex^2 \Delta \mm  + \ppi(\mm) + \ff + \mm \times ( \uu \cdot \nabla ) \mm + \beta ( \uu \cdot \nabla ) \mm,
\end{align*}
where the additional term is the Zhang-Li spin-torque term~\cite{ZL04,TNM+05}. Here, $\ff := \0$,  $\ppi(\mm)$ is the stray field, $\beta := 0.05$ is the constant of non-adiabacity, and $\uu := 1/(\gamma_0 M_s L) (72.17,0,0)^T$ is the spin current with the gyromagnetic ratio $\gamma_0 =2.21 \cdot 10^5\si{\newton}/\si{\ampere}^2$ and the scaling parameter $L = 10^{-9}$.

For time and space discretization, we choose the physical time-step size $\Delta t = 0.1 \si{\pico\second}$ and the physical mesh-size $\Delta x = 3\si{\nano\meter}$. The corresponding $k$ and $h$ are obtained as in~\eqref{eq:rescaling}. The corresponding mesh generated by the \texttt{NGS/Py}~\cite{ngs} embedded module {\tt Netgen} has $25 666$ elements and $5 915$ nodes. We employ our \texttt{C++}-code, the first order tangent plane scheme (see~\eqref{eq:TPS1}). GMRES is restarted every 200 iterations and an iteration tolerance $\varepsilon = 10^{-14}$.
We perform the same numerical experiments as for $\mu$MAG \#4. The results qualitatively agree with those of Section~\ref{subsection:mumag4}; see in Figure~\ref{fig:mumag5_itnr_variants} and Figure~\ref{fig:mumag5_choices_itnr}.

\section{Proof of main results} \label{section:proofs}

\subsection{Auxiliary mappings}\label{subsection:auxiliarymappings}
Recall the hat functions $(\varphi_i)_{i=1}^N$, the nodal basis $(\ppsi_i)_{i=1}^{2N}$ of $(\SS_h)^2$ from~\eqref{eq:basis2D}, and $(\pphi_i)_{i=1}^{3N}$ of $\courantspace_h = (\SS_h)^3$ from~\eqref{eq:basis3D}. Recall the definitions of $\matrixH[\cdot] \in \R^{3 \times 2}$ from~\eqref{eq:householder} and $\matrixQ[\cdot] \in \R^{3N \times 2N}$ from~\eqref{eq:defQ}. Given $\mmu_h \in \magnetizationset_h$, define the mappings
\begin{subequations}\label{eq:defPPhmh}
\begin{align}\label{eq:defPPhmh1}
\mathbb{P}_h[\mmu_h] &: 
(\SS_h)^2 \rightarrow \tps{\mmu_h} \subsetneqq \courantspace_h : 
\ww_h \mapsto \sum_{i=1}^{N} \big( \matrixT_n \matrixH[\matrixT_n\mmu_h(z_i)] \ww_h(z_i)  \big) \, \varphi_i, \\
\widetilde{\mathbb{P}}_h[\mmu_h] &: 
\R^{2N} \rightarrow \tps{\mmu_h} \subsetneqq \courantspace_h : 
\vectorX \mapsto \sum_{i=1}^{3N} \big( \matrixQ[\mmu_h]\vectorX \big)_i \, \pphi_i, \label{eq:defPPhmh2}
\end{align}
\end{subequations}
their ``transposed'' versions
\begin{subequations}\label{eq:defPPhmh_trans}
\begin{align}
\mathbb{P}_h^T[\mmu_h] &: 
\courantspace_h \rightarrow \big( \SS_h \big)^2 : 
\vv_h \rightarrow 
\sum_{i=1}^{N} \big( \matrixH[\mmu_h(z_i)]^T \matrixT_n \vv_h(z_i) \big) \, \varphi_i \label{eq:defPPhmh_trans1} \\
\widetilde{\mathbb{P}}_h^T[\mmu_h] &: 
\R^{3N} \rightarrow \big(\SS_h\big)^2 :
\vectorX \mapsto \sum_{i=1}^{3N} \big( \matrixQ[\mmu_h]^T \vectorX \big)_i \, \pphi_i, \label{eq:defPPhmh_trans2} 
\end{align}
\end{subequations}
and the compositions
\begin{subequations}\label{eq:deforthtangentspace}
\begin{align}
\!\!\!\!\!\!\!\!  \Ppi_h[\mmu_h] &: 
\courantspace_h \rightarrow \tps{\mmu_h} \subsetneqq \courantspace_h :
\vv_h \mapsto \sum_{i=1}^{N} \big( \matrixT_n \matrixH[\matrixT_n\mmu_h(z_i)] \matrixH[\matrixT_n\mmu_h(z_i)]^T \matrixT_n \vv_h(z_i)  \big) \, \varphi_i, 
\label{eq:deforthtangentspace1} \\
\!\!\!\!\!\!\!\!   \widetilde{\Ppi}_h[\mmu_h] &: 
\R^{3N} \rightarrow \tps{\mmu_h} \subsetneqq \courantspace_h :
\vectorX \mapsto \sum_{i=1}^{3N} \big( \matrixQ[\mmu_h] \matrixQ[\mmu_h]^T \vectorX \big)_i \, \pphi_i.\label{eq:deforthtangentspace2}
\end{align}
\end{subequations}
Note that $\Ppi_h[\mmu_h]$ is the nodewise orthogonal projection onto $\tps{\mmu_h}$. The following lemma discusses the relations of the mappings~\eqref{eq:defPPhmh}--\eqref{eq:deforthtangentspace}. 

\begin{lemma}\label{lemma:representations}
For any $\mmu_h \in \magnetizationset_h$, there hold the following assertions {\rm (i)}--{\rm (v)}:

{\rm (i)} For $\vectorX \in \R^{2N}$ and $\ww_h := \sum_{i=1}^{2N} \vectorX_i \ppsi_i \in (\SS_h)^2$, it holds that $\mathbb{P}_h[\mmu_h] \ww_h = \widetilde{\mathbb{P}}_h[\mmu_h] \vectorX$.

{\rm (ii)} For $\vectorY \in \R^{3N}$ and $\vv_h := \sum_{i=1}^{3N} \vectorY_i \pphi_i$, it holds that $\Ppi_h[\mmu_h] \vv_h = \widetilde{\Ppi}_h[\mmu_h] \vectorY$.

{\rm (iii)} For $\vv_h \in \courantspace_h$, it holds that $\mathbb{P}_h[\mmu_h] \circ \mathbb{P}_h^T[\mmu_h] \, \vv_h = \Ppi_h[\mmu_h] \, \vv_h$.

{\rm (iv)} For $\ww_h \in (\SS_h)^2$, it holds that $\mathbb{P}_h^T[\mmu_h] \circ \mathbb{P}_h[\mmu_h] \, \ww_h = \ww_h$.

{\rm (v)} $\mathbb{P}_h[\mmu_h]: (\SS_h)^2 \to \tps{\mmu_h}$ and $\widetilde{\mathbb{P}}_h[\mmu_h]: \R^{2N} \to\tps{\mmu_h}$ are isomorphisms.
\end{lemma}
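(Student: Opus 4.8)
The plan is to establish the five assertions essentially by unwinding the definitions~\eqref{eq:defPPhmh}--\eqref{eq:deforthtangentspace} and exploiting the block-diagonal structure of $\matrixQ[\mmu_h]$ together with the orthonormality of the column blocks $\matrixT_n\matrixH[\matrixT_n\mmu_h(z_i)]$. The starting point is the identity $\matrixH[\vectorM]^T\matrixH[\vectorM] = \matrixI_{2\times2}$, which holds because $\matrixH[\vectorM]$ consists of two columns of the orthonormal matrix $\widetilde{\matrixH}[\vectorM]$; consequently, since $\matrixT_n$ is orthogonal, $(\matrixT_n\matrixH[\matrixT_n\mmu_h(z_i)])^T(\matrixT_n\matrixH[\matrixT_n\mmu_h(z_i)]) = \matrixH[\matrixT_n\mmu_h(z_i)]^T\matrixT_n^T\matrixT_n\matrixH[\matrixT_n\mmu_h(z_i)] = \matrixI_{2\times2}$. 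This single fact drives (iv) and (v), while (iii) needs in addition that the product $\matrixT_n\matrixH[\matrixT_n\mmu_h(z_i)]\matrixH[\matrixT_n\mmu_h(z_i)]^T\matrixT_n$ is the orthogonal projection in $\R^3$ onto $\operatorname{span}(\matrixT_n\matrixH[\matrixT_n\mmu_h(z_i)]) = \{\vectorM\}^\perp$ with $\vectorM = \mmu_h(z_i)$.

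I would proceed in the following order. First, (i): given $\vectorX\in\R^{2N}$ and $\ww_h = \sum_{i=1}^{2N}\vectorX_i\ppsi_i$, I note that the nodal value $\ww_h(z_j)$ is precisely the $\R^2$-vector $(\vectorX_{2(j-1)+1}, \vectorX_{2(j-1)+2})^T$ by~\eqref{eq:basis2D}, while $\widetilde{\mathbb{P}}_h[\mmu_h]\vectorX = \sum_{i=1}^{3N}(\matrixQ[\mmu_h]\vectorX)_i\pphi_i$ has $j$-th nodal value equal to the $j$-th $\R^3$-block of $\matrixQ[\mmu_h]\vectorX$, which by the block-diagonal form~\eqref{eq:defQ} equals $\matrixT_n\matrixH[\matrixT_n\mmu_h(z_j)]\ww_h(z_j)$; comparing with~\eqref{eq:defPPhmh1} gives the claim. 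Assertion (ii) is analogous but with $3N$ in place of $2N$: the $j$-th nodal block of $\matrixQ[\mmu_h]\matrixQ[\mmu_h]^T\vectorY$ is $\matrixT_n\matrixH[\matrixT_n\mmu_h(z_j)]\matrixH[\matrixT_n\mmu_h(z_j)]^T\matrixT_n\vv_h(z_j)$, which matches~\eqref{eq:deforthtangentspace1}. Assertions (iii) and (iv) then follow by composing the nodal formulas: for (iv), $\mathbb{P}_h^T[\mmu_h]\circ\mathbb{P}_h[\mmu_h]\ww_h$ has $j$-th nodal value $\matrixH[\mmu_h(z_j)]^T\matrixT_n(\matrixT_n\matrixH[\matrixT_n\mmu_h(z_j)]\ww_h(z_j))$; here I must be careful to read off from~\eqref{eq:defPPhmh_trans1} that the transpose uses $\matrixH[\mmu_h(z_j)]^T\matrixT_n$ (not $\matrixH[\matrixT_n\mmu_h(z_j)]^T\matrixT_n$), so using $\matrixT_n^T\matrixT_n = \matrixI$ and then $\matrixH[\cdot]^T\matrixH[\cdot] = \matrixI_{2\times2}$ collapses the expression to $\ww_h(z_j)$ — wait, this only works if the two $\matrixH$'s agree, which forces me to reconcile the notation; I would flag that the intended reading is that both transposed maps use the same block $(\matrixT_n\matrixH[\matrixT_n\mmu_h(z_i)])^T$, so that the composition is $\matrixI_{2\times2}$ nodewise. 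For (iii), composing~\eqref{eq:defPPhmh_trans1} after~\eqref{eq:defPPhmh1} yields the nodal operator $\matrixT_n\matrixH[\matrixT_n\mmu_h(z_j)]\matrixH[\matrixT_n\mmu_h(z_j)]^T\matrixT_n$, which is exactly the nodal operator in~\eqref{eq:deforthtangentspace1}.

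Finally, (v): the map $\widetilde{\mathbb{P}}_h[\mmu_h]\colon\R^{2N}\to\tps{\mmu_h}$ is linear between spaces of equal dimension $2N$ (recall $\dim\tps{\mmu_h} = 2N$ from the line after~\eqref{eq:Khn}), so it suffices to check injectivity; if $\widetilde{\mathbb{P}}_h[\mmu_h]\vectorX = \0$ then every nodal block $\matrixT_n\matrixH[\matrixT_n\mmu_h(z_j)]\ww_h(z_j) = \0$ with $\ww_h$ as in (i), and multiplying by the left inverse $(\matrixT_n\matrixH[\matrixT_n\mmu_h(z_j)])^T$ (which exists by the orthonormality just recalled) gives $\ww_h(z_j) = \0$ for all $j$, hence $\vectorX = \0$; surjectivity then follows from equality of dimensions, or directly since any $\vvarphi_h\in\tps{\mmu_h}$ has nodal values in $\{\mmu_h(z_j)\}^\perp = \operatorname{span}(\matrixT_n\matrixH[\matrixT_n\mmu_h(z_j)])$. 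The statement for $\mathbb{P}_h[\mmu_h]$ follows from (i) together with the fact that $\vectorX\mapsto\sum_{i=1}^{2N}\vectorX_i\ppsi_i$ is an isomorphism $\R^{2N}\to(\SS_h)^2$. I do not anticipate a genuine obstacle; the only delicate point is bookkeeping the two superficially different forms of the ``transposed'' blocks ($\matrixH[\mmu_h(z_i)]^T\matrixT_n$ versus $(\matrixT_n\matrixH[\matrixT_n\mmu_h(z_i)])^T = \matrixH[\matrixT_n\mmu_h(z_i)]^T\matrixT_n$) in~\eqref{eq:defPPhmh_trans1}, and verifying that these are consistent so that the clean cancellations in (iii) and (iv) actually go through.
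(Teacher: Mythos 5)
Your proof is correct and takes essentially the same route as the paper, which likewise settles {\rm (i)}--{\rm (iv)} by unwinding the definitions and the block-diagonal structure of $\matrixQ[\mmu_h]$ with orthonormal column blocks, and deduces {\rm (v)} from {\rm (iv)} (resp.\ {\rm (i)}) together with the dimension count $(\SS_h)^2 \cong \R^{2N} \cong \tps{\mmu_h}$. Your reading of the ``transposed'' blocks in~\eqref{eq:defPPhmh_trans1} as $(\matrixT_n\matrixH[\matrixT_n\mmu_h(z_i)])^T = \matrixH[\matrixT_n\mmu_h(z_i)]^T\matrixT_n$ is indeed the intended one (the argument of $\matrixH$ there is a typo, as the use in the subsequent lemmas confirms), so the nodewise cancellations you rely on in {\rm (iii)}--{\rm (iv)} go through exactly as in the paper.
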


\begin{proof}
{\rm (i)}--{\rm (ii)} follow by definition. {\rm (iii)}--{\rm (iv)} follow from the block-structure of $\matrixQ[\mmu_h]$, since $\matrixQ[\mmu_h]$ has orthonormal columns. Since $(\SS_h)^2 \cong \R^{2N} \cong \tps{\mmu_h}$, {\rm (iv)} proves that $\mathbb{P}_h[\mmu_h]$ is an isomorphism. Together with~{\rm (i)}, this also proves the statement about $\widetilde{\mathbb{P}}_h[\mmu_h]$. Altogether, this concludes the proof.
\end{proof}

In the following lemma, we prove discrete $\LL^2$-stabilities of the mappings~\eqref{eq:defPPhmh}--\eqref{eq:deforthtangentspace}.

\begin{lemma}\label{lemma:l2norms}
There exists $C>0$, which depends only on $\Cmesh$, such that the following assertions {\rm (i)}--{\rm (vi)} hold true:

{\rm (i)} For any $\mmu_h \in \magnetizationset_h$, it holds that
\begin{align*}
C^{-1} \norm{ \ww_h }{\LL^2(\Omega)} \leq \norm{ \mathbb{P}_h[\mmu_h] \ww_h }{\LL^2(\Omega)} \leq C
\norm{ \ww_h }{\LL^2(\Omega)}
\quad \textrm{for all } \ww_h \in (\SS_h)^2.
\end{align*}

{\rm (ii)} For any $\mmu_h \in \magnetizationset_h$, it holds that
\begin{align*}
\norm{\mathbb{P}_h^T[\mmu_h] \vv_h}{\LL^2(\Omega)} \leq
C \, \norm{\vv_h}{\LL^2(\Omega)}
\quad \textrm{for all } \vv_h \in \courantspace_h.
\end{align*}

{\rm (iii)} For any $\mmu_h \in \magnetizationset_h$, it holds that
\begin{align*}
\norm{ \Ppi_h[\mmu_h] \vv_h }{\LL^2(\Omega)} \leq C
\norm{ \vv_h }{\LL^2(\Omega)}
\quad \textrm{for all } \vv_h \in \courantspace_h.
\end{align*}

{\rm (iv)} For any $\mmu_h \in \magnetizationset_h$, it holds that
\begin{align*}
C^{-1} h^{3/2} |\vectorX| \leq \norm{ \widetilde{\mathbb{P}}_h[\mmu_h] \vectorX }{\LL^{2}(\Omega)} \leq C h^{3/2} |\vectorX| \quad \textrm{for all } \vectorX \in \R^{2N}.
\end{align*}

{\rm (v)} For any $\mmu_h, \nnu_h \in \magnetizationset_h$, it holds that
\begin{align*}
C^{-1} \, \norm{\widetilde{\mathbb{P}}_h[\nnu_h] \vectorX}{\LL^2(\Omega)} \leq \norm{\widetilde{\mathbb{P}}_h[\mmu_h] \vectorX}{\LL^2(\Omega)} \leq
C \, \norm{\widetilde{\mathbb{P}}_h[\nnu_h] \vectorX}{\LL^2(\Omega)}
\quad \textrm{for all } \vectorX \in \R^{2N}.
\end{align*}

{\rm (vi)} For any $\mmu_h, \nnu_h \in \magnetizationset_h$, it holds that
\begin{align*}
\norm{ \widetilde{\mathbb{P}}_h[\mmu_h] \vectorX - \widetilde{\mathbb{P}}_h[\nnu_h] \vectorX }{\LL^{2}(\Omega)} \leq C h^{3/2} \,
\max_{z \in \NN_h} \matrixnorm{\matrixH[\matrixT_n\mmu_h(z)] - \matrixH[\matrixT_n\nnu_h(z)]} \, | \vectorX |
\quad \textrm{for all } \vectorX \in \R^{2N}.
\end{align*}
\end{lemma}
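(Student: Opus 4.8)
The plan is to reduce each of the six assertions to a nodewise identity or inequality combined with the scaling argument~\eqref{eq:scalingargument}. The structural observation behind all of them is the following: if $\wh \in (\SS_h)^d$ has nodal values of the form $\wh(z_i) = \matrixB_i \vectorV_i$ with matrices $\matrixB_i$ and vectors $\vectorV_i$, then~\eqref{eq:scalingargument} gives $\norm{\wh}{\LL^2(\Omega)}^2 \simeq h^3 \sum_{i=1}^N |\matrixB_i \vectorV_i|^2$, with the hidden constants depending only on $\Cmesh$ (through $d \in \{2,3\}$, since $\TT_h$ is $\Cmesh$-quasi-uniform). Moreover, since $\matrixT_n$ is orthogonal and every Householder block $\matrixH[\cdot] \in \R^{3\times2}$ from~\eqref{eq:householder} has orthonormal columns, we have $|\matrixH[\cdot]\vectorV| = |\vectorV|$, $\matrixnorm{\matrixH[\cdot]} = \matrixnorm{\matrixH[\cdot]^T} = 1$, and the nodewise matrices $\matrixT_n \matrixH[\cdot] \matrixH[\cdot]^T \matrixT_n$ are orthogonal projections of operator norm one.

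First I would prove (i): by~\eqref{eq:defPPhmh1} the function $\mathbb{P}_h[\mmu_h]\wh \in \courantspace_h = (\SS_h)^3$ has nodal value $\matrixT_n\matrixH[\matrixT_n\mmu_h(z_i)]\wh(z_i)$ at $z_i$, whose Euclidean length equals $|\wh(z_i)|$; applying~\eqref{eq:scalingargument} once with $d=3$ to $\mathbb{P}_h[\mmu_h]\wh$ and once with $d=2$ to $\wh$ yields the two-sided bound. Assertion (ii) is the same computation using $|\matrixH[\cdot]^T\matrixT_n\vh(z_i)| \leq |\vh(z_i)|$, and (iii) follows either directly from the fact that $\Ppi_h[\mmu_h]$ is a nodewise orthogonal projection (so that $|\Ppi_h[\mmu_h]\vh(z_i)| \leq |\vh(z_i)|$) or, alternatively, by composing (i) and (ii) through Lemma~\ref{lemma:representations}{\rm(iii)}.

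For (iv), combining Lemma~\ref{lemma:representations}{\rm(i)} with (i) reduces the claim to $\norm{\wh}{\LL^2(\Omega)} \simeq h^{3/2}|\vectorX|$ for $\wh := \sum_{i=1}^{2N}\vectorX_i\ppsi_i$, which is exactly~\eqref{eq:scalingargument} with $d=2$ together with the identity $\sum_{j=1}^N |\wh(z_j)|^2 = |\vectorX|^2$ coming from the definition~\eqref{eq:basis2D} of the $\ppsi_i$. Then (v) is immediate, since both sides are $\simeq h^{3/2}|\vectorX|$ by (iv). Finally, for (vi) I would note that by~\eqref{eq:defPPhmh2} and~\eqref{eq:defQ} the difference $\widetilde{\mathbb{P}}_h[\mmu_h]\vectorX - \widetilde{\mathbb{P}}_h[\nnu_h]\vectorX \in (\SS_h)^3$ has nodal value $\matrixT_n\big(\matrixH[\matrixT_n\mmu_h(z_j)] - \matrixH[\matrixT_n\nnu_h(z_j)]\big)\vectorX^{(j)}$ at $z_j$, where $\vectorX^{(j)} \in \R^2$ collects the two entries of $\vectorX$ attached to the node $z_j$; bounding its length by $\max_{z \in \NN_h}\matrixnorm{\matrixH[\matrixT_n\mmu_h(z)] - \matrixH[\matrixT_n\nnu_h(z)]}\,|\vectorX^{(j)}|$ and invoking~\eqref{eq:scalingargument} with $d=3$ together with $\sum_j |\vectorX^{(j)}|^2 = |\vectorX|^2$ gives the stated bound.

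I do not expect any genuine obstacle here: the statement is essentially a bookkeeping exercise on top of~\eqref{eq:scalingargument}. The points that need mild care are keeping track that the scaling constant is $d+2$ with $d \in \{2,3\}$ (hence controlled by $\Cmesh$ alone), and the block indexing of $\matrixQ[\cdot]$ from~\eqref{eq:defQ} which identifies $\R^{2N}$ with $N$ pairs of nodal components; everything else rests solely on the orthogonality of $\matrixT_n$ and of the columns of the Householder blocks.
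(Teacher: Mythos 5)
Your proposal is correct and follows essentially the same route as the paper: reduce each assertion to nodal values via~\eqref{eq:scalingargument} and exploit that $\matrixT_n$ is orthogonal and the Householder blocks have orthonormal columns, with (iv) from the nodal identity $\sum_j|\ww_h(z_j)|^2=|\vectorX|^2$, (v) from (iv), and (vi) from the nodewise matrix-norm bound. The only cosmetic difference is in (iii), where you argue directly via the nodewise projection bound while the paper composes (i)--(ii) through Lemma~\ref{lemma:representations}{\rm(iii)}; both are valid and you mention the latter as an alternative anyway.
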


\begin{proof}
Throughout the proof, recall that $\matrixT_n = \matrixT_n^{-1} = \matrixT_n^T$. For the proof of {\rm (i)}, and {\rm (iv)}--{\rm (vi)}, let $\vectorX \in \R^{2N}$ and define $\ww_h := \sum_{i=1}^{2N} \vectorX_i \ppsi_i$. 

The matrices $\matrixH[\mmu_h(z_i)] \in \R^{3 \times 2}$ have orthonormal columns. Lemma~\ref{lemma:representations}~{(i)} yields that
\begin{align*}
& \norm{ \widetilde{\mathbb{P}}_h[\mmu_h] \vectorX }{\LL^{2}(\Omega)}^2
 \stackrel{\phantom{\eqref{eq:scalingargument}}}{=}  
\norm{ \mathbb{P}_h[\mmu_h] \ww_h }{\LL^{2}(\Omega)}^2   
\notag \\ 
& \quad \stackrel{\eqref{eq:scalingargument}}{\simeq} 
h^3 \sum_{i=1}^N | \matrixT_n \matrixH[\matrixT_n\mmu_h(z_i)] \ww_h(z_i) |^2
\stackrel{\phantom{\eqref{eq:scalingargument}}}{=}    
h^3 \sum_{i=1}^N | \ww_h(z_i) |^2 \stackrel{\eqref{eq:scalingargument}}{\simeq} \norm{ \ww_h }{\LL^{2}(\Omega)}^2.
\end{align*}
This proves~{\rm (i)}, and~{\rm (iv)} follows from $\sum_{i=1}^N |\ww(z_i)|^2 = \sum_{i=1}^{2N} |\vectorX_i|^2$. {\rm (v)} is a direct consequence of~{\rm (iv)}. For the proof of~{\rm (vi)}, note that 
\begin{align*}
\norm{ \widetilde{\mathbb{P}}_h[\mmu_h] \vectorX - \widetilde{\mathbb{P}}_h[\nnu_h] \vectorX}{\LL^{2}(\Omega)}^2 
& \stackrel{\eqref{eq:scalingargument}}{\simeq} h^3 \sum_{i=1}^N 
\Big| \, \matrixT_n \big( \, \matrixH[\matrixT_n\mmu_h(z_i)] - \matrixH[\matrixT_n\nnu_h(z_i)] \, \big) \, \ww_h(z_i) \, \Big|^2 \\
& \leq 
h^3 \max_{z \in \NN_h} \matrixnorm{\matrixH[\matrixT_n\mmu_h(z)] - \matrixH[\matrixT_n\nnu_h(z)]}^2 \, | \vectorX |^2 .
\end{align*}
This proves~{\rm (vi)}. For the proof of~{\rm (ii)}--{\rm (iii)}, let $\vv_h \in \courantspace_h$. Since the matrices $\matrixH[\mmu_h(z_i)] \in \R^{3 \times 2}$ have orthonormal columns, we obtain that
\begin{align*}
\norm{ \mathbb{P}_h^T[\mmu_h] \vv_h }{\LL^{2}(\Omega)}^2 
\stackrel{\eqref{eq:scalingargument}}{\simeq} 
h^3 \sum_{i=1}^N |\matrixH[\matrixT_n\mmu_h(z_i)]^T \matrixT_n \vv_h(z_i) |^2 \leq 
h^3 \sum_{i=1}^N |\vv_h(z_i) |^2 \stackrel{\eqref{eq:scalingargument}}{\simeq}  
\norm{\vv_h }{\LL^{2}(\Omega)}^2.
\end{align*}
This proves~{\rm (ii)}. Together with~{\rm (i)}--{\rm (ii)} and Lemma~\ref{lemma:representations}~{\rm (iii)}, this also proves~{\rm (iii)}. Altogether, this concludes the proof.
\end{proof}

In the following lemma, we prove discrete $\HH^1$-stability properties of the mappings~\eqref{eq:defPPhmh}--\eqref{eq:deforthtangentspace}. Note that (in contrast to Lemma~\ref{lemma:representations} and Lemma~\ref{lemma:l2norms}) the following lemma builds on the explicit definition of the Householder matrices~\eqref{eq:householder}.

\begin{lemma}\label{lemma:superfancylemma}
Let $\mmu_h \in \magnetizationset_h$ with $1 + (\mmu_h(z))_3 \geq \gamma > 0$ for all $z \in \NN_h$. Then, there exists $C > 1$, which depends only on $\Cmesh$, such that the following assertions {\rm (i)}--{\rm (iii)} hold true:

{\rm (i)} For all $\ww_h \in \big( \SS_h \big)^2$, it holds that
\begin{align*}
\norm{\nabla \big( \, \mathbb{P}_h[\mmu_h] \ww_h \, \big)}{\LL^2(\Omega)} \leq C 
\gamma^{-2} \, \norm{\nabla \mmu_h}{\LL^{\infty}(\Omega)} \, \norm{\ww_h}{\LL^{2}(\Omega)} + C \, \norm{\nabla \ww_h }{\LL^{2}(\Omega)}.
\end{align*}

{\rm (ii)} For all $\vv_h \in \courantspace_h$, it holds that
\begin{align*}
\norm{\nabla \big( \, \mathbb{P}_h^T[\mmu_h] \vv_h \, \big)}{\LL^2(\Omega)} \leq C 
\gamma^{-2} \, \norm{\nabla \mmu_h}{\LL^{\infty}(\Omega)} \, \norm{\vv_h}{\LL^{2}(\Omega)} + C \, \norm{\nabla \vv_h }{\LL^{2}(\Omega)}.
\end{align*}

{\rm (iii)} For all $\vv_h \in \courantspace_h$, it holds that
\begin{align*}
\norm{\nabla \big( \, \Ppi_h[\mmu_h] \vv_h \, \big)}{\LL^2(\Omega)} \leq C 
\gamma^{-2} \, \norm{\nabla \mmu_h}{\LL^{\infty}(\Omega)} \, \norm{\vv_h}{\LL^{2}(\Omega)} + C \, \norm{\nabla \vv_h }{\LL^{2}(\Omega)}.
\end{align*}
\end{lemma}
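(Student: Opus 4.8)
The plan is to use that each of the maps $\mathbb{P}_h[\mmu_h]$, $\mathbb{P}_h^T[\mmu_h]$, and $\Ppi_h[\mmu_h]$ outputs a $\TT_h$-piecewise-affine function whose nodal values are obtained from those of the argument by applying the nodewise matrices $\matrixT_n\matrixH[\matrixT_n\mmu_h(z)]$ (or their transposes), and to reduce the $\LL^2$-norm of the gradient to a purely nodal estimate. On a single tetrahedron $K\in\TT_h$, an affine function $g_h$ satisfies, with constants depending only on $\Cmesh$,
\begin{align*}
\norm{\nabla g_h}{\LL^2(K)}^2 \simeq h \sum_{(z,z')} |g_h(z)-g_h(z')|^2 \quad\textrm{and}\quad \norm{g_h}{\LL^2(K)}^2 \simeq h^3 \sum_{z} |g_h(z)|^2,
\end{align*}
where the first sum runs over the six edges $(z,z')$ of $K$ and the second over the vertices $z$ of $K$ (a scaling/$\Cmesh$-quasi-uniformity argument, in the spirit of~\eqref{eq:scalingargument}). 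After summation over $K\in\TT_h$, all three assertions thus reduce to estimating edge differences $|g_h(z)-g_h(z')|$ of the images.

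The crucial input, and this is where the explicit form~\eqref{eq:tildeH} is needed, is a Lipschitz estimate for the Householder map on the admissible sector. Since $|\vectorM+\vectorE_3|^2 = 2\,(1+(\vectorM)_3)$ for $|\vectorM|=1$, formula~\eqref{eq:tildeH} becomes $\widetilde{\matrixH}[\vectorM] = \matrixI - (1+(\vectorM)_3)^{-1}(\vectorM+\vectorE_3)(\vectorM+\vectorE_3)^T$ whenever $1+(\vectorM)_3\geq\gamma>0$. The numerator is smooth with bounded entries and bounded derivative, the denominator is bounded below by $\gamma$, and on the segment joining two unit vectors with third component $\geq\gamma-1$ the denominator stays $\geq\gamma$; hence the mean value theorem yields
\begin{align*}
\matrixnorm{\matrixH[\vectorM]-\matrixH[\vectorM']} \leq \matrixnorm{\widetilde{\matrixH}[\vectorM]-\widetilde{\matrixH}[\vectorM']} \lesssim \gamma^{-2}\,|\vectorM-\vectorM'|
\end{align*}
for all unit vectors $\vectorM,\vectorM'$ with $1+(\vectorM)_3, 1+(\vectorM')_3\geq\gamma$. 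I would apply this at the (rotated) nodal values entering the Householder blocks, which lie in the admissible sector by the hypothesis of the lemma, and combine it with the orthogonality of $\matrixT_n$ and with $|\mmu_h(z)-\mmu_h(z')| \leq \operatorname{diam}(K)\,\norm{\nabla\mmu_h}{\LL^\infty(K)} \lesssim h\,\norm{\nabla\mmu_h}{\LL^\infty(K)}$ for nodes $z,z'$ of one element.

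For (i), set $g_h := \mathbb{P}_h[\mmu_h]\ww_h$ and split the edge difference by inserting $\pm\,\matrixT_n\matrixH[\matrixT_n\mmu_h(z)]\ww_h(z')$: the first piece is bounded by $|\ww_h(z)-\ww_h(z')|$ (orthonormal columns of $\matrixH$ and $\matrixT_n=\matrixT_n^{-1}=\matrixT_n^T$), the second by the Lipschitz estimate, so that $|g_h(z)-g_h(z')| \lesssim |\ww_h(z)-\ww_h(z')| + \gamma^{-2}h\,\norm{\nabla\mmu_h}{\LL^\infty(K)}\,|\ww_h(z')|$. Squaring, summing over the edges of $K$, multiplying by $h$, using the two equivalences above for $\ww_h$ and $\nabla\ww_h$, summing over $K\in\TT_h$, and taking a square root gives (i). Assertion (ii) is obtained in exactly the same way with $\mathbb{P}_h^T[\mmu_h]$, $\vv_h\in\courantspace_h$, and the transpose of the Householder block. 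Finally, (iii) requires no new edge estimate: by the factorization $\Ppi_h[\mmu_h] = \mathbb{P}_h[\mmu_h]\circ\mathbb{P}_h^T[\mmu_h]$ from Lemma~\ref{lemma:representations}(iii), apply (i) with argument $\mathbb{P}_h^T[\mmu_h]\vv_h$ and then estimate $\norm{\nabla(\mathbb{P}_h^T[\mmu_h]\vv_h)}{\LL^2(\Omega)}$ by (ii) and $\norm{\mathbb{P}_h^T[\mmu_h]\vv_h}{\LL^2(\Omega)}$ by Lemma~\ref{lemma:l2norms}(ii).

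I expect the one genuinely non-routine point to be the Lipschitz estimate for $\matrixH[\cdot]$: one must control precisely the degeneration of the Householder construction as $\vectorM$ approaches the excluded direction $-\vectorE_3$, i.e.\ as $1+(\vectorM)_3\to 0$, and verify that the segment joining two admissible unit vectors never leaves the good region, so that the mean value argument applies with the claimed power of $\gamma$. Everything after that is bookkeeping with the scaling equivalences.
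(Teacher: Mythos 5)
Your argument is correct and reaches the stated estimates, but it takes a genuinely different route from the paper. The paper's proof (Section~\ref{subsection:auxiliarymappings}) first rewrites $\mathbb{P}_h[\mmu_h]\ww_h$ as the nodal interpolant $\boldsymbol{\mathcal{I}}_h(\qq[\mmu_h]\ww_h)$, where $\qq[\mmu_h] = \matrixT_n\matrixR_1[\matrixT_n\mmu_h]-\matrixT_n\matrixR_2[\matrixT_n\mmu_h]$ is a piecewise rational matrix field obtained by expanding the Householder formula; it then splits $\norm{\nabla\boldsymbol{\mathcal{I}}_h(\qq\ww_h)}{\LL^2}$ via the interpolation error into $\norm{\nabla(\qq\ww_h)}{\LL^2}$ plus an $h\,D^2(\qq\ww_h)$ correction, and controls both using the elementwise $W^{1,\infty}$- and $W^{2,\infty}$-bounds on $\matrixR_1,\matrixR_2$ from Lemma~\ref{lemma:auxiliary1}, closing with an inverse inequality on $\ww_h$. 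Your proof never invokes the interpolation operator or second derivatives of the Householder field: it works at the level of nodal values, uses the quasi-uniform equivalences $\norm{\nabla g_h}{\LL^2(K)}^2\simeq h\sum_{\text{edges}}|g_h(z)-g_h(z')|^2$ and $\norm{g_h}{\LL^2(K)}^2\simeq h^3\sum_z|g_h(z)|^2$, and replaces Lemma~\ref{lemma:auxiliary1} by a single Lipschitz bound $\matrixnorm{\matrixH[\vectorM]-\matrixH[\vectorM']}\lesssim\gamma^{-2}|\vectorM-\vectorM'|$ on the sector $1+(\vectorM)_3\geq\gamma$, proved from the closed form $\widetilde{\matrixH}[\vectorM]=\matrixI-(1+\vectorM_3)^{-1}(\vectorM+\vectorE_3)(\vectorM+\vectorE_3)^T$ and the mean value theorem (the check that the segment between two admissible unit vectors stays admissible is exactly what makes this go through). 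Both proofs rely crucially on the explicit Householder formula and both settle (iii) by the same composition $\Ppi_h=\mathbb{P}_h\circ\mathbb{P}_h^T$ with Lemma~\ref{lemma:l2norms}(ii). What your approach buys is a shorter, more elementary derivation, with all the $\gamma$-bookkeeping concentrated in one Lipschitz estimate and no need for elementwise $W^{2,\infty}$-regularity of the auxiliary field; what it gives up is the generality of the interpolant-based argument, which would extend more naturally to higher-order $\SS_h$, where the edge-difference characterization of $\norm{\nabla\cdot}{\LL^2(K)}$ no longer holds. One small remark: the lemma's hypothesis as printed reads $1+(\mmu_h(z))_3\geq\gamma$, while what is actually needed (and used in the paper's proof and in yours) is $1+(\matrixT_n\mmu_h(z))_3\geq\gamma$; you implicitly make the same correction when you speak of "the rotated nodal values entering the Householder blocks".
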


\begin{proof}
First, we prove~{\rm (i)}. We split the proof into the following six steps.

{\bf Step 1.} We derive a handier representation of $\mathbb{P}_h[\mmu_h]$. Let $\mu_1,\mu_2,\mu_3 \in \SS_h$ such that $\matrixT_n \mmu_h := ( \mu_1,\mu_2,\mu_3)^T$. Since functions in $\SS_h$ attain their minimum in one of the nodes, we obtain, in particular, that 
\begin{align}\label{eq:transferred_assumption}
1 + \mu_3 = 1 + (\matrixT_n \mmu_h)_3 \geq \gamma > 0 \quad \textrm{in } \Omega .
\end{align}
Hence, we can interpret
\begin{align}\label{eq:matricesR}
\matrixR_1[\matrixT_n \mmu_h] := 
\begin{pmatrix}
1 & 0 \\
0 & 1 \\
- \mu_1 & -\mu_2
\end{pmatrix}, 
\quad
\matrixR_2[\matrixT_n \mmu_h]:= \frac{1}{1+\mu_3}
\begin{pmatrix}
\mu_1^2 & \mu_1 \mu_2 \\
\mu_1 \mu_2 & \mu_2^2 \\
0 & 0
\end{pmatrix}
\end{align}
and
\begin{align}\label{eq:defRs}
\qq[\mmu_h] := \matrixT_n \matrixR_1[\matrixT_n\mmu_h] - \matrixT_n\matrixR_2[\matrixT_n\mmu_h]
\end{align}
as functions $\qq[\mmu_h], \matrixR_1[\matrixT_n \mmu_h], \matrixR_2[\matrixT_n \mmu_h]:\Omega \rightarrow \R^{3 \times 2}$. With the definition of the Householder matrices~\eqref{eq:householder}, an elementary calculation shows that $\matrixT_n\matrixH[\matrixT_n\mmu_h(z_i)] = \qq[\mmu_h(z_i)] $ for all $i = 1,\dots,N$. With $\boldsymbol{\mathcal{I}}_h$ being the vector-valued nodal interpolant onto $\courantspace_h$, we get
\begin{align}\label{eq:defPPhmhasQ}
\mathbb{P}_h[\mmu_h] \ww_h  \stackrel{\eqref{eq:defPPhmh}}{=} \boldsymbol{\mathcal{I}}_h ( \qq[\mmu_h] \ww_h ) \quad \textrm{for all } \ww_h \in (\SS_h)^2.
\end{align}

{\bf Step 2.} We derive preliminary estimates for $\matrixR_1[\matrixT_n\mmu_h]$ and $\matrixR_2[\matrixT_n\mmu_h]$ from~\eqref{eq:matricesR}. To this end, recall that $\matrixT_n = \matrixT_n^{-1} = \matrixT_n^T$. Lemma~\ref{lemma:auxiliary1}~{\rm (i)} yields that 
\begin{subequations}\label{eq:propertiesRs}
\begin{align}\label{eq:propertiesRs_zero}
\norm{\matrixR_1[\matrixT_n\mmu_h]}{\LL^{\infty}(\Omega)} \lesssim 1 
\quad \textrm{and} \quad
\norm{ \matrixR_2[\matrixT_n\mmu_h] }{\LL^{\infty}(\Omega)} \lesssim 1.
\end{align}
Let $k \in \{1,2,3\}$. Lemma~\ref{lemma:auxiliary1}~{\rm (ii)} yields that
\begin{align}\label{eq:propertiesRs_first_1}
\norm{\partial_{k} \big[  \matrixR_1[\matrixT_n\mmu_h] \big]}{\LL^{\infty}(\Omega)} 
\stackrel{\eqref{eq:transferred_assumption}}{\lesssim} \norm{\nabla \matrixT_n\mmu_h}{\LL^\infty(\Omega)} 
= \norm{\matrixT_n \nabla \mmu_h}{\LL^\infty(\Omega)} = \norm{\nabla \mmu_h}{\LL^\infty(\Omega)},
\end{align}
as well as
\begin{align}
\label{eq:propertiesRs_first_2}
\norm{\partial_{k} \big[  \matrixR_2[\matrixT_n\mmu_h] \big]}{\LL^{\infty}(\Omega)} 
\stackrel{\eqref{eq:transferred_assumption}}{\lesssim} \gamma^{-1} \norm{\nabla \matrixT_n \mmu_h}{\LL^\infty(\Omega)} = \gamma^{-1} \norm{\nabla \mmu_h}{\LL^\infty(\Omega)}.
\end{align}
Let $\ell,k \in \{1,2,3\}$. The definition~\eqref{eq:matricesR} and Lemma~\ref{lemma:auxiliary1}~{\rm (iii)} yield that, elementwise, 
\begin{align}\label{eq:propertiesRs_second}
\partial_{\ell} \partial_{k} \big[  \matrixR_1[\matrixT_n\mmu_h] \big] = \0
\end{align}
as well as
\begin{align}
\max_{K \in \TT_h}  \norm{\partial_\ell \partial_{k} \big[ \matrixR_2[\matrixT_n\mmu_h] \big]}{\LL^{\infty}(K)} \stackrel{\eqref{eq:transferred_assumption}}{\lesssim}
\gamma^{-2} \norm{\nabla \matrixT_n\mmu_h}{\LL^\infty(\Omega)}^2 =
\gamma^{-2} \norm{\nabla \mmu_h}{\LL^\infty(\Omega)}^2.
\end{align}
\end{subequations}

{\bf Step 3.} For $\ww_h \in (\SS_h)^2$, we estimate $\norm{ \nabla [ \mathbb{P}_h[\mmu_h] \ww_h ] }{\LL^2(\Omega)}$. To that end, note that $\qq[\mmu_h] \ww_h|_K \in (H^2(K))^3$ for all elements $K$. We exploit the elementwise approximation properties of the nodal interpolant $\boldsymbol{\mathcal{I}}_h$ and obtain that
\begin{align}
& \norm{\nabla \big[ \mathbb{P}_h[\mmu_h] \vv_h \big]}{\LL^{2}(\Omega)} 
\stackrel{\eqref{eq:defPPhmh}}{=}
\norm{\nabla \big[ \, \boldsymbol{\mathcal{I}}_h ( \qq[\mmu_h] \ww_h ) \, \big] }{\LL^{2}(\Omega)}
\notag \\
& 
\stackrel{\phantom{\eqref{eq:defPPhmh}}}{\lesssim}
\norm{\nabla \big[ \,  \qq[\mmu_h] \ww_h \, \big] }{\LL^{2}(\Omega)} 
+
\norm{\nabla \big[ \,  (\boldsymbol{1} - \boldsymbol{\mathcal{I}}_h) ( \qq[\mmu_h] \ww_h ) \, \big] }{\LL^{2}(\Omega)} \notag \\
& \stackrel{\phantom{\eqref{eq:defPPhmh}}}{\lesssim}
\norm{\nabla \big[ \,  \qq[\mmu_h] \ww_h \, \big] }{\LL^{2}(\Omega)} 
+
h \,
\Big( \, 
\sum_{K \in \TT_h}
\norm{D^2 \big[ \, ( \qq[\mmu_h] \ww_h ) \, \big] }{\LL^{2}(K)}^2
\, \Big)^{1/2}
 =: \, T_1 + h \, T_2. \label{eq:T1T2}
\end{align}

{\bf Step 4.} We estimate $T_1$. Let $k \in \{1,2,3\}$. The product rule yields that
\begin{align}
& \partial_{k} \big[   \qq[\mmu_h] \ww_h  \big] 
\stackrel{\phantom{\eqref{eq:defRs}}}{=}
\partial_{k} \big[  \qq[\mmu_h] \big] \ww_h  + \qq[\mmu_h] \partial_{k} \ww_h \notag \\
& \stackrel{\eqref{eq:defRs}}{=}
\partial_{k} \big[  \matrixT_n\matrixR_1[\matrixT_n\mmu_h] \big] \ww_h - 
\partial_{k} \big[  \matrixT_n\matrixR_2[\matrixT_n\mmu_h] \big] \ww_h 
\, + \, 
\matrixT_n\matrixR_1[\matrixT_n\mmu_h] \partial_{k} \ww_h - \matrixT_n\matrixR_2[\matrixT_n\mmu_h] \partial_{k} \ww_h \notag \\
& \stackrel{\phantom{\eqref{eq:defRs}}}{=}
\matrixT_n \partial_{k} \big[ \matrixR_1[\matrixT_n\mmu_h] \big] \ww_h - 
\matrixT_n \partial_{k} \big[ \matrixR_2[\matrixT_n\mmu_h] \big] \ww_h 
\, + \, 
\matrixT_n\matrixR_1[\matrixT_n\mmu_h] \partial_{k} \ww_h 
- \matrixT_n\matrixR_2[\matrixT_n\mmu_h] \partial_{k} \ww_h .
\label{eq:productrule1}
\end{align}
Note that $\gamma\leq 1 + \mu_3 \leq 1 + |\mu_3| \leq 2$ and recall that $\matrixT_n = \matrixT_n^{-1} = \matrixT_n^T$. With the estimates from~\eqref{eq:propertiesRs} and with $1 \leq 2/\gamma$, the latter equation yields that
\begin{align*}
T_1
& \stackrel{\eqref{eq:T1T2}}{\lesssim}
\gamma^{-1} \, \norm{\nabla \matrixT_n \mmu_h}{\LL^{\infty}(\Omega)} \norm{\ww_h}{\LL^{2}(\Omega)}
 + \norm{\nabla \ww_h}{\LL^{2}(\Omega)} \\
& \stackrel{\phantom{\eqref{eq:T1T2}}}{=}
\gamma^{-1} \, \norm{\nabla \mmu_h}{\LL^{\infty}(\Omega)} \norm{\ww_h}{\LL^{2}(\Omega)}
 + \norm{\nabla \ww_h}{\LL^{2}(\Omega)}. 
\end{align*}

{\bf Step 5.} We estimate $T_2$. Let $\ell,k \in \{1,2,3\}$. Elementwise, it holds that 
\begin{align*}
\partial_{\ell} \matrixT_n \partial_k \matrixR_1[\matrixT_n \mmu_h] = \matrixT_n \partial_{\ell} \partial_k \matrixR_1[\matrixT_n \mmu_h] \stackrel{\eqref{eq:propertiesRs_second}}{=} \0
\end{align*}
as well as $\partial_{\ell} \partial_k \ww_h = \0$. Together with the product rule, this yields that
\begin{align*}
\partial_{\ell} \partial_{k} \big[ \,  \qq[\mmu_h] \ww_h \, \big] 
& \stackrel{\eqref{eq:productrule1}}{=} 
\partial_{k} \big[ \matrixT_n \matrixR_1[\matrixT_n\mmu_h] \big] \partial_{\ell} \ww_h 
- 
\partial_{\ell} \matrixT_n \partial_{k} \big[  \matrixR_2[\matrixT_n\mmu_h] \big] \ww_h
- 
\partial_{k} \big[ \matrixT_n \matrixR_2[\matrixT_n\mmu_h] \big] \partial_{\ell} \ww_h \notag \\
& \qquad + \partial_{\ell} \matrixT_n \big[ \matrixR_1[ \matrixT_n\mmu_h] \big] \partial_k \ww_h 
- \partial_{\ell} \big[ \matrixT_n \matrixR_2[ \matrixT_n \mmu_h] \big] \partial_k \ww_h \notag \\
& \stackrel{\phantom{\eqref{eq:productrule1}}}{=} 
\matrixT_n \partial_{k} \big[ \matrixR_1[\matrixT_n \mmu_h] \big] \partial_{\ell} \ww_h 
- 
\matrixT_n \partial_{\ell} \partial_{k} \big[ \matrixR_2[\matrixT_n \mmu_h] \big] \ww_h
- 
\matrixT_n \partial_{k} \big[ \matrixR_2[ \matrixT_n \mmu_h] \big] \partial_{\ell} \ww_h  \notag \\
& \qquad + \matrixT_n \partial_{\ell} \big[ \matrixR_1[ \matrixT_n \mmu_h] \big] \partial_k \ww_h 
- \matrixT_n \partial_{\ell} \big[ \matrixR_2[ \matrixT_n \mmu_h] \big] \partial_k \ww_h.
\end{align*}
Recall that $\matrixT_n = \matrixT_n^{-1} = \matrixT_n^T$. With~\eqref{eq:propertiesRs} and $1 \leq 2/\gamma$, the latter equation yields that
\begin{align*}
T_2 
& \stackrel{\eqref{eq:T1T2}}{\lesssim} \, \gamma^{-1} \, \norm{\nabla \matrixT_n \mmu_h}{\LL^{\infty}(\Omega)} \, \norm{\nabla \ww_h}{\LL^{2}(\Omega)} + 
\gamma^{-2} \, \norm{\nabla \matrixT_n \mmu_h}{\LL^{\infty}(\Omega)}^2 \, \norm{\ww_h}{\LL^{2}(\Omega)} \\
& \stackrel{\phantom{\eqref{eq:T1T2}}}{=} \, \gamma^{-1} \, \norm{\nabla \mmu_h}{\LL^{\infty}(\Omega)} \, \norm{\nabla \ww_h}{\LL^{2}(\Omega)} + 
\gamma^{-2} \, \norm{\nabla \mmu_h}{\LL^{\infty}(\Omega)}^2 \, \norm{\ww_h}{\LL^{2}(\Omega)}.
\end{align*}

{\bf Step 6.} We combine {\bf Step 3}--{\bf Step 5}. For all $\ww_h \in (\SS_h)^2$, this yields that
\begin{align*}
  \norm{\nabla \big[ \mathbb{P}_h[\mmu_h] \ww_h \big]}{\LL^{2}(\Omega)} 
& \lesssim 
\, \gamma^{-1} \, 
\norm{\nabla \mmu_h}{\LL^{\infty}(\Omega)} \, 
\norm{\ww_h}{\LL^{2}(\Omega)}
 + \norm{\nabla \ww_h}{\LL^{2}(\Omega)} \notag \\
& \qquad +  h \, \gamma^{-1} \, 
\norm{\nabla \mmu_h}{\LL^{\infty}(\Omega)} \, 
\norm{\nabla \ww_h}{\LL^{2}(\Omega)} + 
h \, \gamma^{-2} \, \norm{\nabla \mmu_h}{\LL^{\infty}(\Omega)}^2 \, 
\norm{\ww_h}{\LL^{2}(\Omega)}.
\end{align*}
With an inverse estimate and $1 \leq 2/\gamma$, the latter equation yields for all $\ww_h \in (\SS_h)^2$ that
\begin{align}\label{eq:coreestimate1}
\!\!\!\! \norm{\nabla \big[ \mathbb{P}_h[\mmu_h] \ww_h \big]}{\LL^{2}(\Omega)} \lesssim 
\gamma^{-2} \, 
\norm{\nabla \mmu_h}{\LL^{\infty}(\Omega)} \, 
\norm{\ww_h}{\LL^{2}(\Omega)}
 + \norm{\nabla \ww_h}{\LL^{2}(\Omega)}.
\end{align}
This concludes the proof of~{\rm (i)}. 

For the proof of~{\rm (ii)}, let $\widetilde{\boldsymbol{\mathcal{I}}}_h$ be the nodal interpolant in 2D. With $\widetilde{\boldsymbol{\mathcal{I}}}_h$ instead of $\boldsymbol{\mathcal{I}}_h$ and $\qq[\mmu_h]^T: \Omega \rightarrow \R^{2 \times 3}$ instead of $\qq[\mmu_h]$, the proof of~{\rm(ii)} follows the lines of {\bf Step 1}--{\bf Step 5}. 

For the proof of~{\rm (iii)}, let $\vv_h \in \courantspace_h$ and $\ww_h := \mathbb{P}_h^T[\mmu_h] \vv_h \in \courantspace_h$. With Lemma~\ref{lemma:representations}~{\rm (iii)} and Lemma~\ref{lemma:l2norms}~{\rm (ii)}, we get that
\begin{align*}
& \norm{ \nabla [ \Ppi_h[\mmu_h] \vv_h ] }{\LL^2(\Omega)} 
 =
\norm{ \nabla \, \big[ \, \big( \mathbb{P}_h[\mmu_h] \circ \mathbb{P}_h^T[\mmu_h] \big) \, \vv_h \big] }{\LL^2(\Omega)} 
 \stackrel{\phantom{{\rm (ii)}}}{\lesssim}  
 \norm{ \nabla \, \big[ \, \mathbb{P}_h[\mmu_h] \, \ww_h \big] }{\LL^2(\Omega)} 
\\
& \qquad  \stackrel{{\rm (i)}}{\lesssim} 
  \gamma^{-2} \, \norm{\nabla \mmu_h}{\LL^{\infty}(\Omega)} \, \norm{\mathbb{P}_h^T[\mmu_h]\vv_h}{\LL^{2}(\Omega)} + 
 \norm{\nabla \big[ \mathbb{P}_h^T[\mmu_h]\vv_h \big] }{\LL^{2}(\Omega)}  \\
& \qquad \stackrel{\phantom{{\rm (ii)}}}{\lesssim}   
 \gamma^{-2} \norm{\nabla \mmu_h}{\LL^{\infty}(\Omega)} \, \norm{\vv_h}{\LL^{2}(\Omega)} + \norm{\nabla \big[ \mathbb{P}_h^T[\mmu_h]\vv_h \big] }{\LL^{2}(\Omega)} \\
& \qquad \stackrel{{\rm (ii)}}{\lesssim}  
 \gamma^{-2} \, \norm{\nabla \mmu_h}{\LL^{\infty}(\Omega)} \, \norm{\vv_h}{\LL^{2}(\Omega)} + \norm{\nabla \vv_h }{\LL^{2}(\Omega)}.
\end{align*}
This proves~{\rm (iii)} and concludes the proof.
\end{proof}

In the following lemma, we prove a discrete $\HH^1$-continuity of the mapping $\mathbb{P}_{h}(\cdot)$ from~\eqref{eq:defPPhmh}. Unlike Lemma~\ref{lemma:representations} and Lemma~\ref{lemma:l2norms}, the following lemma builds on the explicit definition of the Householder matrices~\eqref{eq:householder}.
\pagebreak

\begin{lemma}\label{lemma:superfancylemma2}
Let $\mmu_h, \nnu_h \in \magnetizationset_h$ with $1 + (\matrixT_n\mmu_h(z))_3 \geq \gamma > 0$ and $1 + (\matrixT_n\nnu_h(z))_3 \geq \gamma > 0$ for all $z \in \NN_h$. Then, there exists $C > 1$, which depends only on $\Cmesh$, such that 
\begin{align}\label{eq:superfancyestimate}
\begin{split}
& \norm{\nabla \big( \, ( \mathbb{P}_h[\mmu_h] - \mathbb{P}_h[\nnu_h] ) \, \ww_h \, \big)}{\LL^2(\Omega)} \\
& \quad \leq 
C \, \gamma^{-1} \, \norm{\nabla \mmu_h - \nabla \nnu_h}{\LL^\infty(\Omega)} \, \norm{\ww_h}{\LL^2(\Omega)} 
+ 
C \, \gamma^{-1} \, \norm{\mmu_h - \nnu_h}{\LL^\infty(\Omega)} \, \norm{\nabla \ww_h}{\LL^2(\Omega)} \\
& \qquad + C \, \gamma^{-3} \,
\big( \, \norm{\nabla \mmu_h}{\LL^\infty(\Omega)} + \norm{\nabla \nnu_h}{\LL^\infty(\Omega)} \, \big) \,
\norm{\mmu_h - \nnu_h}{\LL^{\infty}(\Omega)} \,
\norm{\ww_h}{\LL^2(\Omega)},
\end{split}
\end{align}
for all $\ww_h \in \big( \SS_h \big)^2$.
\end{lemma}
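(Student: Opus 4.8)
The plan is to run the proof of Lemma~\ref{lemma:superfancylemma} once more, now for the \emph{difference} of the two mappings. Recall from {\bf Step~1} of that proof the handier representation $\mathbb{P}_h[\mmu_h]\ww_h = \boldsymbol{\mathcal{I}}_h(\qq[\mmu_h]\ww_h)$, where $\boldsymbol{\mathcal{I}}_h$ is the vector-valued nodal interpolant onto $\courantspace_h$ and $\qq[\mmu_h] = \matrixT_n\matrixR_1[\matrixT_n\mmu_h] - \matrixT_n\matrixR_2[\matrixT_n\mmu_h]$, cf.~\eqref{eq:matricesR}--\eqref{eq:defRs}. By linearity of $\boldsymbol{\mathcal{I}}_h$,
\[
\big( \mathbb{P}_h[\mmu_h] - \mathbb{P}_h[\nnu_h] \big)\,\ww_h \; = \; \boldsymbol{\mathcal{I}}_h\big( \DPishort\,\ww_h \big) , \qquad \DPishort := \qq[\mmu_h] - \qq[\nnu_h] .
\]
On every element $K \in \TT_h$, all factors of $\DPishort\,\ww_h$ are polynomial --- note that $1 + (\matrixT_n\mmu_h)_3$ and $1 + (\matrixT_n\nnu_h)_3$ are affine and, since $\SS_h$-functions attain their extrema at nodes, bounded below by $\gamma$ on all of $\Omega$ --- so $(\DPishort\,\ww_h)|_K \in (H^2(K))^3$, and the elementwise first-order approximation estimate for $\boldsymbol{\mathcal{I}}_h$ (as in {\bf Step~3}) reduces~\eqref{eq:superfancyestimate} to bounding $\norm{\nabla(\DPishort\,\ww_h)}{\LL^2(\Omega)}$ together with $h\,\big(\sum_{K\in\TT_h}\norm{D^2(\DPishort\,\ww_h)}{\LL^2(K)}^2\big)^{1/2}$.

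The core of the argument is pointwise (elementwise) control of $\DPishort$ and of $\nabla\DPishort$, $D^2\DPishort$. The $\matrixR_1$-part of $\DPishort$ is affine in its argument, hence $\matrixT_n(\matrixR_1[\matrixT_n\mmu_h]-\matrixR_1[\matrixT_n\nnu_h])$ depends only on $\matrixT_n(\mmu_h-\nnu_h)$; using $\matrixT_n = \matrixT_n^T = \matrixT_n^{-1}$, it contributes $\LL^\infty$-bound $\lesssim \norm{\mmu_h-\nnu_h}{\LL^\infty(\Omega)}$, first-derivative bound $\lesssim \norm{\nabla\mmu_h-\nabla\nnu_h}{\LL^\infty(\Omega)}$, and vanishing second derivative. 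For the $\matrixR_2$-part I would write the difference over the common denominator $(1+(\matrixT_n\mmu_h)_3)(1+(\matrixT_n\nnu_h)_3) \geq \gamma^2$ and expand the numerator by adding and subtracting mixed terms; since its entries are polynomials in components bounded by $1$ (using $|\mmu_h|,|\nnu_h|\leq 1$ on each element), telescoping bounds the numerator pointwise by $\lesssim \norm{\mmu_h-\nnu_h}{\LL^\infty(\Omega)}$. Differentiating this quotient representation once and twice --- with the product/quotient rule, $D^2(1+(\matrixT_n\mmu_h)_3)=\0$ elementwise, and the derivative bounds for $\matrixR_1,\matrixR_2$ already recorded in Lemma~\ref{lemma:auxiliary1} (applied to both $\mmu_h$ and $\nnu_h$) --- yields bounds for $\nabla\DPishort$ and $D^2\DPishort$ in which each differentiation costs one further power of $\gamma^{-1}$ and one further factor out of $\{\norm{\nabla\mmu_h}{\LL^\infty(\Omega)}+\norm{\nabla\nnu_h}{\LL^\infty(\Omega)},\ \norm{\nabla\mmu_h-\nabla\nnu_h}{\LL^\infty(\Omega)}\}$, depending on which copy is hit by the derivative.

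Finally I would insert these pointwise bounds into the product-rule expansions $\nabla(\DPishort\,\ww_h) = (\nabla\DPishort)\,\ww_h + \DPishort\,\nabla\ww_h$ and, componentwise (since $D^2\ww_h=\0$ elementwise), $D^2(\DPishort\,\ww_h) = (D^2\DPishort)\,\ww_h + (\nabla\DPishort)(\nabla\ww_h) + (\nabla\ww_h)(\nabla\DPishort)$, exactly mirroring {\bf Step~4}--{\bf Step~5}; then use an inverse estimate to absorb the prefactor $h$ of the $D^2$-contribution --- via $h\,\big(\norm{\nabla\mmu_h}{\LL^\infty(K)}+\norm{\nabla\nnu_h}{\LL^\infty(K)}\big)\lesssim\norm{\mmu_h}{\LL^\infty(K)}+\norm{\nnu_h}{\LL^\infty(K)}\leq 2$ and $h\,\norm{\nabla\ww_h}{\LL^2(K)}\lesssim\norm{\ww_h}{\LL^2(K)}$ --- and finally use $1\leq 2/\gamma$ to merge lower-order $\gamma$-powers into the three terms displayed in~\eqref{eq:superfancyestimate}, just as in {\bf Step~6}. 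The main obstacle is precisely this $\matrixR_2$-bookkeeping: keeping track of the quotient-rule derivatives of the rational, quadratic entries so that no weight worse than $\gamma^{-3}$ survives, and so that $\norm{\mmu_h-\nnu_h}{\LL^\infty(\Omega)}$ and $\norm{\nabla\mmu_h-\nabla\nnu_h}{\LL^\infty(\Omega)}$ end up paired with the correct companion norms of $\ww_h$; everything else is a routine re-run of the machinery developed for Lemma~\ref{lemma:superfancylemma}.
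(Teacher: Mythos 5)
Your overall architecture is the same as the paper's: the interpolant representation $(\mathbb{P}_h[\mmu_h]-\mathbb{P}_h[\nnu_h])\,\ww_h=\boldsymbol{\mathcal{I}}_h\big((\qq[\mmu_h]-\qq[\nnu_h])\,\ww_h\big)$, the split into $T_1+h\,T_2$ via the elementwise interpolation estimate, the product-rule bookkeeping with $\partial_\ell\partial_k\matrixR_1=\0=\partial_\ell\partial_k\ww_h$ elementwise, and the final absorption via $h\big(\norm{\nabla\mmu_h}{\LL^\infty(\Omega)}+\norm{\nabla\nnu_h}{\LL^\infty(\Omega)}\big)\lesssim1$, $h\norm{\nabla\ww_h}{\LL^2(\Omega)}\lesssim\norm{\ww_h}{\LL^2(\Omega)}$ and $1\le2/\gamma$. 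The genuine gap is in your treatment of the $\matrixR_2$-difference, which is exactly the step you flag as the main obstacle. Writing the difference over the common denominator $(1+(\matrixT_n\mmu_h)_3)(1+(\matrixT_n\nnu_h)_3)\ge\gamma^2$ and bounding the telescoped numerator merely by $\lesssim\norm{\mmu_h-\nnu_h}{\LL^\infty(\Omega)}$ (using only $|\mmu_h|,|\nnu_h|\le1$) gives the zeroth-order bound $\gamma^{-2}\norm{\mmu_h-\nnu_h}{\LL^\infty(\Omega)}$, not $\gamma^{-1}$: in the telescoped numerator the term $\nu_i\nu_j(\nu_3-\mu_3)$ carries both denominator factors, and with $|\nu_i\nu_j|\le1$ alone nothing cancels. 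This loss propagates. The $T_1$-contribution $(\matrixR_2[\matrixT_n\mmu_h]-\matrixR_2[\matrixT_n\nnu_h])\,\partial_k\ww_h$ is then controlled only by $\gamma^{-2}\norm{\mmu_h-\nnu_h}{\LL^\infty(\Omega)}\norm{\nabla\ww_h}{\LL^2(\Omega)}$, and your rule ``one extra $\gamma^{-1}$ per derivative'' yields $\gamma^{-3}$- and $\gamma^{-4}$-weighted first- and second-derivative bounds. Since the closing device $1\le2/\gamma$ can only raise, never lower, powers of $\gamma^{-1}$, the plan as written proves~\eqref{eq:superfancyestimate} with weights $\gamma^{-2},\gamma^{-2},\gamma^{-4}$ in place of the asserted $\gamma^{-1},\gamma^{-1},\gamma^{-3}$, i.e.\ a strictly weaker statement.

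The missing ingredient is the sharper elementary identity the paper uses in Lemma~\ref{lemma:auxiliary2}, namely~\eqref{eq:master}: with $d_k:=\mu_k-\nu_k$ and $P_{ij}:=\mu_i\mu_j/(1+\mu_3)$, one has $\frac{\mu_i\mu_j}{1+\mu_3}-\frac{\nu_i\nu_j}{1+\nu_3}=-P_{ij}\,\frac{d_3}{1+\nu_3}+\frac{\mu_j d_i}{1+\nu_3}+\frac{\nu_i d_j}{1+\nu_3}$, so that only a single denominator factor survives and the dangerous quotient $P_{ij}$ is bounded by $2$ through Lemma~\ref{lemma:auxiliary1}{\rm(i)}; note that this bound uses $\mu_1^2+\mu_2^2\le1-\mu_3^2$, i.e.\ the unit-length constraint of $\magnetizationset_h$ at the nodes, not merely $|\mmu_h|\le1$. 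Differentiating this representation (rather than the crude common-denominator quotient) and invoking Lemma~\ref{lemma:auxiliary1}{\rm(i)}--{\rm(iii)} for the factors that are not differences yields precisely the difference bounds with weights $\gamma^{-1}$ at order zero, $\gamma^{-2}\big(\norm{\nabla\mmu_h}{\LL^\infty(\Omega)}+\norm{\nabla\nnu_h}{\LL^\infty(\Omega)}\big)\norm{\mmu_h-\nnu_h}{\LL^\infty(\Omega)}+\gamma^{-1}\norm{\nabla\mmu_h-\nabla\nnu_h}{\LL^\infty(\Omega)}$ at first order, and the corresponding $\gamma^{-3}$/$\gamma^{-2}$ weights at second order, which your Steps~4--6 then need in order to reproduce the exponents claimed in the lemma. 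With this replacement your plan coincides with the paper's proof; without it, the stated estimate is not reached.
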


\begin{proof}
We split the proof into the following six steps. 

{\bf Step 1.} With the assumption $1 + (\matrixT_n\mmu_h(z))_3 \geq \gamma > 0$ and $1 + (\matrixT_n\nnu_h(z))_3 \geq \gamma > 0$ for all nodes $z \in \NN_h$, we use the definitions~\eqref{eq:matricesR} of $\matrixR_1(\cdot)$ and $\matrixR_2(\cdot)$ and interpret
\begin{subequations}\label{eq:defqq_12}
\begin{align}
\qq[\mmu_h] & := \matrixT_n\matrixR_1[\matrixT_n\mmu_h] - \matrixT_n\matrixR_2[\matrixT_n\mmu_h] , \\
\qq[\nnu_h] & := \matrixT_n \matrixR_1[\matrixT_n\nnu_h] - \matrixT_n\matrixR_2[\matrixT_n\nnu_h]
\end{align}
\end{subequations}
as functions $\qq[\mmu_h], \qq[\nnu_h]: \Omega \rightarrow \R^{3 \times 2}$. With $\boldsymbol{\mathcal{I}}_h$ being the vector-valued nodal interpolant onto $\courantspace_h$, recall from~\eqref{eq:defPPhmhasQ} that
\begin{align}\label{eq:diffrepresentation}
(\mathbb{P}_h[\mmu_h] - \mathbb{P}_h[\nnu_h]) \, \ww_h  =
\boldsymbol{\mathcal{I}}_h \big( \, (\qq[\mmu_h] - \qq[\nnu_h] ) \,  \ww_h \, \big)
\quad \textrm{for all } \ww_h \in (\SS_h)^2.
\end{align}

{\bf Step 2.} Recall that $\matrixT_n = \matrixT_n^{-1} = \matrixT_n^T$. With the definition~\eqref{eq:matricesR} of $\matrixR_1(\cdot)$, we get that
\begin{subequations}\label{eq:preliminaries_R1_diff}
\begin{align}
\norm{\matrixR_1[\matrixT_n\mmu_h] - \matrixR_1[\matrixT_n\nnu_h]}{\LL^{\infty}(\Omega)} & \lesssim
\norm{\matrixT_n\mmu_h - \matrixT_n\nnu_h}{\LL^{\infty}(\Omega)}
 = \norm{\mmu_h - \nnu_h}{\LL^{\infty}(\Omega)} .
\end{align}
With the product rule, we further get for all $\ell, k \in \{1,2,3\}$ that
\begin{align}
& \norm{\partial_k \matrixR_1[\matrixT_n\mmu_h] - \partial_k \matrixR_1[\matrixT_n\nnu_h]}{\LL^{\infty}(\Omega)}
\lesssim 
 \norm{\nabla \matrixT_n\mmu_h - \nabla \matrixT_n\nnu_h}{\LL^{\infty}(\Omega)} \notag \\
& \quad  = \norm{\matrixT_n \nabla \mmu_h - \matrixT_n \nabla \nnu_h}{\LL^{\infty}(\Omega)} 
  = \norm{\nabla \mmu_h - \nabla \nnu_h}{\LL^{\infty}(\Omega)}
\end{align}
as well as
\begin{align}
\partial_{\ell} \partial_k \matrixR_1[\matrixT_n\mmu_h] \, = \, \0 &=  \partial_{\ell} \partial_k \matrixR_1[\matrixT_n\nnu_h] . \label{eq:secondr1zero}
\end{align}
\end{subequations}
Moreover, define
\begin{subequations}\label{eq:defsigma}
\begin{align}
\begin{split}
\sigma(\matrixT_n \mmu_h, \matrixT_n \nnu_h) 
& := \norm{\nabla \matrixT_n \mmu_h}{\LL^\infty(\Omega)} + \norm{\nabla \matrixT_n \nnu_h}{\LL^\infty(\Omega)} \\
& \,= \norm{\matrixT_n \nabla \mmu_h}{\LL^\infty(\Omega)} + \norm{\matrixT_n \nabla \nnu_h}{\LL^\infty(\Omega)} \\
& \,=
\norm{\nabla \mmu_h}{\LL^\infty(\Omega)} + \norm{\nabla \nnu_h}{\LL^\infty(\Omega)}
= \sigma(\mmu_h,\nnu_h).
\end{split}
\end{align}
Note that an inverse inequality yields that
\begin{align}
h \, \sigma(\mmu_h, \nnu_h) \lesssim \norm{\mmu_h}{\LL^\infty(\Omega)} + \norm{\nnu_h}{\LL^\infty(\Omega)} = 2.
\end{align}
\end{subequations}
Lemma~\ref{lemma:auxiliary2} and the definition~\eqref{eq:matricesR} of $\matrixR_2(\cdot)$ then yield that
\begin{subequations}\label{eq:preliminaries_R2_diff}
\begin{align}
\!\!\!\!\!\!
\norm{\matrixR_2[\matrixT_n \mmu_h] - \matrixR_2[\matrixT_n \nnu_h]}{\LL^{\infty}(\Omega)} & \lesssim 
\gamma^{-1} \, \norm{\matrixT_n\mmu_h - \matrixT_n\nnu_h}{\LL^{\infty}(\Omega)} 
 = \gamma^{-1} \, \norm{\mmu_h - \nnu_h}{\LL^{\infty}(\Omega)} .
\end{align}
For all $\ell, k \in \{1,2,3\}$, we further get that
\begin{align}
& \norm{\partial_k \matrixR_2[\matrixT_n\mmu_h] - \partial_k \matrixR_2[\matrixT_n\nnu_h]}{\LL^{\infty}(\Omega)} \notag \\
& \lesssim \gamma^{-2} \,
\sigma(\matrixT_n\mmu_h,\matrixT_n\nnu_h) \,
\norm{\matrixT_n\mmu_h - \matrixT_n\nnu_h}{\LL^{\infty}(\Omega)} 
+ \gamma^{-1} \,
\norm{\nabla \matrixT_n\mmu_h - \nabla \matrixT_n \nnu_h}{\LL^{\infty}(\Omega)} \notag \\
& \! \stackrel{\eqref{eq:defsigma}}{=} \gamma^{-2} \,
\sigma(\mmu_h,\nnu_h) \,
\norm{\matrixT_n\mmu_h - \matrixT_n\nnu_h}{\LL^{\infty}(\Omega)} 
+ \gamma^{-1} \,
\norm{\matrixT_n \nabla \mmu_h - \matrixT_n \nabla \nnu_h}{\LL^{\infty}(\Omega)} \notag \\
& \! \stackrel{\phantom{\eqref{eq:defsigma}}}{=} \gamma^{-2} \,
\sigma(\mmu_h,\nnu_h) \,
\norm{\mmu_h - \nnu_h}{\LL^{\infty}(\Omega)} 
+ \gamma^{-1} \,
\norm{\nabla \mmu_h - \nabla \nnu_h}{\LL^{\infty}(\Omega)}
\end{align}
as well as
\begin{align}
& \norm{\partial_{\ell} \partial_k \matrixR_2[\matrixT_n\mmu_h] - \partial_{\ell} \partial_k \matrixR_2[\matrixT_n\nnu_h]}{\LL^{\infty}(\Omega)} \notag \\
& \quad \lesssim \gamma^{-3} \,
\sigma(\matrixT_n\mmu_h,\matrixT_n\nnu_h)^2 \,
\norm{\matrixT_n\mmu_h - \matrixT_n\nnu_h}{\LL^{\infty}(\Omega)} \notag \\
& \qquad + \gamma^{-2} \,
\sigma(\matrixT_n\mmu_h,\matrixT_n\nnu_h) \,
\norm{\nabla \matrixT_n\mmu_h - \nabla \matrixT_n\nnu_h}{\LL^{\infty}(\Omega)} \notag \\
& \quad \stackrel{\eqref{eq:defsigma}}{=} \gamma^{-3} \,
\sigma(\mmu_h,\nnu_h)^2 \,
\norm{\mmu_h - \nnu_h}{\LL^{\infty}(\Omega)} 
+ \gamma^{-2} \,
\sigma(\mmu_h,\nnu_h) \,
\norm{\nabla \mmu_h - \nabla \nnu_h}{\LL^{\infty}(\Omega)}.
\end{align}
\end{subequations}

{\bf Step 3.} Let $\ww_h \in (\SS_h)^2$. Standard estimates for the nodal interpolant $\boldsymbol{\mathcal{I}}_h$ yield that
\begin{align*}
& \norm{\nabla \big[ \, ( \mathbb{P}_h[\mmu_h] - \mathbb{P}_h[\nnu_h] ) \, \ww_h \, \big]}{\LL^2(\Omega)} 
\stackrel{\eqref{eq:diffrepresentation}}{=} 
\norm{\nabla  \boldsymbol{\mathcal{I}}_h \big( \, (\qq[\mmu_h] - \qq[\nnu_h] ) \,  \ww_h \, \big)}{\LL^{2}(\Omega)}
\\
& \quad \leq
\norm{\nabla \big( \, (\qq[\mmu_h] - \qq[\nnu_h] ) \,  \ww_h \, \big)}{\LL^{2}(\Omega)}
+
\norm{\nabla (\boldsymbol{1} - \boldsymbol{\mathcal{I}}_h) \big( \, (\qq[\mmu_h] - \qq[\nnu_h] ) \,  \ww_h \, \big)}{\LL^{2}(\Omega)} \\
& \quad \lesssim
\norm{\nabla \big( \, (\qq[\mmu_h] - \qq[\nnu_h] ) \,  \ww_h \, \big)}{\LL^{2}(\Omega)}
+ 
h \, \bigg( \, \sum_{K \in \TT_h} \norm{D^2 \big( \, (\qq[\mmu_h] - \qq[\nnu_h] ) \,  \ww_h \, \big)}{\LL^{2}(K)}^2 \, \bigg)^2 \\
& \quad =:  T_1 + h \, T_2 .
\end{align*}

{\bf Step 4.} We estimate $T_1$. Let $k \in \{ 1,2,3 \}$. With the product rule, we get that
\begin{align*}
& \partial_k \big( \, (\qq[\mmu_h] - \qq[\nnu_h] ) \,  \ww_h \, \big)
 \stackrel{\phantom{\eqref{eq:defqq_12}}}{=} 
\partial_k \big( \, (\qq[\mmu_h] - \qq[\nnu_h] ) \,  \, \big) \ww_h +
(\qq[\mmu_h] - \qq[\nnu_h] ) \,  \partial_k \ww_h \\
&\stackrel{\eqref{eq:defqq_12}}{=}
\partial_k \big( \, (\matrixT_n \matrixR_1[\matrixT_n \mmu_h] - \matrixT_n \matrixR_1[\matrixT_n \nnu_h] ) \,  \, \big) \ww_h -
\partial_k \big( \, (\matrixT_n \matrixR_2[\matrixT_n \mmu_h] - \matrixT_n \matrixR_2[\matrixT_n \nnu_h] ) \,  \, \big) \ww_h
 \\
& \qquad + (\matrixT_n \matrixR_1[\matrixT_n \mmu_h] - \matrixT_n \matrixR_1[\matrixT_n \nnu_h] ) \,  \partial_k \ww_h
 - (\matrixT_n \matrixR_2[\matrixT_n \mmu_h] - \matrixT_n \matrixR_2[\matrixT_n \nnu_h] ) \,  \partial_k \ww_h \\
& \stackrel{\phantom{\eqref{eq:defqq_12}}}{=}
\matrixT_n \partial_k \big( \, ( \matrixR_1[\matrixT_n \mmu_h] - \matrixR_1[\matrixT_n \nnu_h] ) \,  \, \big) \ww_h -
\matrixT_n \partial_k \big( \, ( \matrixR_2[\matrixT_n \mmu_h] - \matrixR_2[\matrixT_n \nnu_h] ) \,  \, \big) \ww_h
 \\
& \qquad + \matrixT_n( \matrixR_1[\matrixT_n \mmu_h] - \matrixR_1[\matrixT_n \nnu_h] ) \,  \partial_k \ww_h
 - \matrixT_n (\matrixR_2[\matrixT_n \mmu_h] - \matrixR_2[\matrixT_n \nnu_h] ) \,  \partial_k \ww_h.
\end{align*}
Recall that $\matrixT_n = \matrixT_n^{-1} = \matrixT_n^T$. With the estimates from {\bf Step 2}, we further get that
\begin{align*}
& \norm{\partial_k \big( \, (\qq[\mmu_h] - \qq[\nnu_h] ) \,  \ww_h \, \big)}{\LL^{2}(\Omega)} \\
& \quad \stackrel{\phantom{\eqref{eq:defqq_12}}}{\leq} 
\norm{\nabla \mmu_h - \nabla \nnu_h}{\LL^\infty(\Omega)} \, \norm{\ww_h}{\LL^2(\Omega)} +
 \gamma^{-2} \, \sigma(\mmu_h,\nnu_h) \, \norm{\mmu_h - \nnu_h}{\LL^\infty(\Omega)} \, \norm{\ww_h}{\LL^2(\Omega)}\\
& \qquad 
+ \gamma^{-1} \, \norm{\nabla \mmu_h - \nabla \nnu_h}{\LL^\infty(\Omega)} \, \norm{\ww_h}{\LL^2(\Omega)}
+ \norm{\mmu_h - \nnu_h}{\LL^\infty(\Omega)} \, \norm{\partial_k \ww_h}{\LL^2(\Omega)} 
 \\
& \qquad + \gamma^{-1} \, \norm{\mmu_h - \nnu_h}{\LL^\infty(\Omega)} \, \norm{\partial_k \ww_h}{\LL^2(\Omega)}.
\end{align*}
With $1 \leq 2/ \gamma$, we arrive at
\begin{align*}
T_1 & \lesssim \gamma^{-1} \, \norm{\nabla \mmu_h - \nabla \nnu_h}{\LL^\infty(\Omega)} \, \norm{\ww_h}{\LL^2(\Omega)} + \gamma^{-1} \, \norm{\mmu_h - \nnu_h}{\LL^\infty(\Omega)} \, \norm{\nabla \ww_h}{\LL^2(\Omega)} \\
& \qquad + \gamma^{-2} \, \sigma(\mmu_h,\nnu_h) \, \norm{\mmu_h - \nnu_h}{\LL^\infty(\Omega)} \, \norm{\ww_h}{\LL^2(\Omega)}.
\end{align*}

{\bf Step 5.} We estimate $T_2$. To this end, let $\ell, k \in \{ 1,2,3 \}$. Note that the second derivative of the piecewise affine function $\ww_h$ vanishes on each element $K \in \TT_h$. Moreover, recall from~\eqref{eq:secondr1zero}, that elementwise $\partial_\ell \partial_k \matrixR_1[\matrixT_n\mmu_h] = \0 = \partial_\ell \partial_k \matrixR_1[\matrixT_n\nnu_h]$. The product rule yields elementwise that
\begin{align*}
& \partial_\ell \partial_k \big( \, (\qq[\mmu_h] - \qq[\nnu_h] ) \,  \ww_h \, \big) \\
& \stackrel{\phantom{\eqref{eq:defqq_12}}}{=} 
\partial_{\ell} \partial_k \big( \, \qq[\mmu_h] - \qq[\nnu_h]  \,  \big) \ww_h + 
 \partial_k \big( \, \qq[\mmu_h] - \qq[\nnu_h]  \, \big) \partial_{\ell} \ww_h + 
  \partial_\ell \big( \, \qq[\mmu_h] - \qq[\nnu_h]  \, \big) \partial_k \ww_h \\
& \stackrel{\eqref{eq:defqq_12}}{=}
\partial_{\ell} \partial_k \big( \, \matrixT_n \matrixR_2[\matrixT_n \mmu_h] - \matrixT_n \matrixR_2[\matrixT_n\nnu_h]  \, \big) \ww_h + 
\partial_k \big( \, \matrixT_n \matrixR_1[\matrixT_n \mmu_h] - \matrixT_n \matrixR_1[\matrixT_n \nnu_h]  \, \big) \partial_{\ell} \ww_h \\
& \qquad  - 
\partial_k \big( \, \matrixT_n \matrixR_2[\matrixT_n \mmu_h] - \matrixT_n \matrixR_2[\matrixT_n \nnu_h]  \, \big) \partial_{\ell} \ww_h 
+
\partial_\ell \big( \, \matrixT_n \matrixR_1[\matrixT_n \mmu_h] -  \matrixT_n \matrixR_1[\matrixT_n\nnu_h]  \, \big) \partial_{k} \ww_h \\
& \qquad -
\partial_\ell \big( \, \matrixT_n \matrixR_2[\matrixT_n \mmu_h] - \matrixT_n \matrixR_2[ \matrixT_n \nnu_h]  \, \big) \partial_{k} \ww_h \\
& \stackrel{\phantom{\eqref{eq:defqq_12}}}{=}
\matrixT_n \partial_{\ell} \partial_k \big( \,  \matrixR_2[\matrixT_n \mmu_h] - \matrixR_2[\matrixT_n\nnu_h]  \, \big) \ww_h + 
\matrixT_n \partial_k \big( \,  \matrixR_1[\matrixT_n \mmu_h] - \matrixR_1[\matrixT_n \nnu_h]  \, \big) \partial_{\ell} \ww_h \\
& \qquad  - 
\matrixT_n \partial_k \big( \, \matrixR_2[\matrixT_n \mmu_h] -  \matrixR_2[\matrixT_n \nnu_h]  \, \big) \partial_{\ell} \ww_h 
+
\matrixT_n \partial_\ell \big( \,  \matrixR_1[\matrixT_n \mmu_h] - \matrixR_1[\matrixT_n\nnu_h]  \, \big) \partial_{k} \ww_h \\
& \qquad -
\matrixT_n \partial_\ell \big( \, \matrixR_2[\matrixT_n \mmu_h] - \matrixR_2[ \matrixT_n \nnu_h]  \, \big) \partial_{k} \ww_h.
\end{align*}
With the estimates from {\bf Step 2} and since $\matrixT_n = \matrixT_n^{-1} = \matrixT_n^T$, we get for all $K \in \TT_h$ that
\begin{align*}
& \norm{\partial_\ell \partial_k \big[ \, (\qq[\mmu_h] - \qq[\nnu_h] ) \,  \ww_h \, \big]}{\LL^2(K)} \\
& \lesssim
\gamma^{-3} \,
\sigma(\mmu_h,\nnu_h)^2 \,
\norm{\mmu_h - \nnu_h}{\LL^{\infty}(\Omega)} 
\norm{\ww_h}{\LL^2(K)}
+ \gamma^{-2} \,
\sigma(\mmu_h,\nnu_h) \,
\norm{\nabla \mmu_h - \nabla \nnu_h}{\LL^{\infty}(\Omega)} 
\norm{\ww_h}{\LL^2(K)} \\
& \quad + \norm{\nabla \mmu_h - \nabla \nnu_h}{\LL^\infty(\Omega)} \, \norm{\partial_\ell \ww_h}{\LL^2(K)} 
+ \gamma^{-1} \norm{\nabla \mmu_h - \nabla \nnu_h}{\LL^\infty(\Omega)} \, \norm{\partial_\ell \ww_h}{\LL^2(K)}
\\
& \quad + \gamma^{-2} \, \sigma(\mmu_h,\nnu_h) \, \norm{\mmu_h - \nnu_h}{\LL^{\infty}(\Omega)} \, 
\norm{\partial_\ell \ww_h}{\LL^2(K)} 
+ \norm{\nabla \mmu_h - \nabla \nnu_h}{\LL^\infty(\Omega)} \, \norm{\partial_k \ww_h}{\LL^2(K)} \\
& \quad + \gamma^{-1} \norm{\nabla \mmu_h - \nabla \nnu_h}{\LL^\infty(\Omega)} \, \norm{\partial_k \ww_h}{\LL^2(K)} + \gamma^{-2} \, \sigma(\mmu_h,\nnu_h) \, \norm{\mmu_h - \nnu_h}{\LL^{\infty}(\Omega)} \, 
\norm{\partial_k \ww_h}{\LL^2(K)}. 
\end{align*}
With $1 \leq 2 / \gamma$, the latter estimate simplifies to
\begin{align*}
T_2 & \lesssim 
\gamma^{-3} \,
\sigma(\mmu_h,\nnu_h)^2 \,
\norm{\mmu_h - \nnu_h}{\LL^{\infty}(\Omega)} 
\norm{\ww_h}{\LL^2(\Omega)} 
+ \gamma^{-2} \, \sigma(\mmu_h,\nnu_h) \, \norm{\mmu_h - \nnu_h}{\LL^{\infty}(\Omega)} \, 
\norm{\nabla \ww_h}{\LL^2(\Omega)}
\\
& \quad +
\gamma^{-2} \, \sigma(\mmu_h,\nnu_h) \,
\norm{\nabla \mmu_h - \nabla \nnu_h}{\LL^{\infty}(\Omega)} 
\norm{\ww_h}{\LL^2(K)}
+ \gamma^{-1} \norm{\nabla \mmu_h - \nabla \nnu_h}{\LL^\infty(\Omega)} \, \norm{\nabla \ww_h}{\LL^2(\Omega)}.
\end{align*}

{\bf Step 6.} We combine {\bf Step 3}--{\bf Step 5}. An inverse estimate and $1 \leq 2 / \gamma$ imply that
\begin{align*}
& \norm{\nabla \big[ \, ( \mathbb{P}_h[\mmu_h] - \mathbb{P}_h[\nnu_h] ) \, \ww_h \, \big]}{\LL^2(\Omega)} 
\leq T_1 + h T_2
\\
& \stackrel{\phantom{\eqref{eq:defsigma}}}{\lesssim}
\gamma^{-1} \, \norm{\nabla \mmu_h - \nabla \nnu_h}{\LL^\infty(\Omega)} \, \norm{\ww_h}{\LL^2(\Omega)} + \gamma^{-1} \, \norm{\mmu_h - \nnu_h}{\LL^\infty(\Omega)} \, \norm{\nabla \ww_h}{\LL^2(\Omega)} \\
& \quad + \gamma^{-2} \, \sigma(\mmu_h,\nnu_h) \, \norm{\mmu_h - \nnu_h}{\LL^\infty(\Omega)} \, \norm{\ww_h}{\LL^2(\Omega)}
+
h \gamma^{-3} \,
\sigma(\mmu_h,\nnu_h)^2 \,
\norm{\mmu_h - \nnu_h}{\LL^{\infty}(\Omega)} 
\norm{\ww_h}{\LL^2(\Omega)} \\
& \quad +
h \gamma^{-2} \, \sigma(\mmu_h,\nnu_h) \, \norm{\mmu_h - \nnu_h}{\LL^{\infty}(\Omega)} \, 
\norm{\nabla \ww_h}{\LL^2(\Omega)} +
h \gamma^{-2} \, \sigma(\mmu_h,\nnu_h) \, \norm{\nabla \mmu_h - \nabla \nnu_h}{\LL^{\infty}(\Omega)} \, 
\norm{\ww_h}{\LL^2(\Omega)} \\
& \quad
+ h \gamma^{-1} \norm{\nabla \mmu_h - \nabla \nnu_h}{\LL^\infty(\Omega)} \, \norm{\nabla \ww_h}{\LL^2(\Omega)} \\
& \stackrel{\eqref{eq:defsigma}}{\lesssim}
\gamma^{-1} \, \norm{\nabla \mmu_h - \nabla \nnu_h}{\LL^\infty(\Omega)} \, \norm{\ww_h}{\LL^2(\Omega)} 
+ \gamma^{-1} \, \norm{\mmu_h - \nnu_h}{\LL^\infty(\Omega)} \, \norm{\nabla \ww_h}{\LL^2(\Omega)} \\
& \quad + \gamma^{-3} \,
\sigma(\mmu_h,\nnu_h) \,
\norm{\mmu_h - \nnu_h}{\LL^{\infty}(\Omega)} 
\norm{\ww_h}{\LL^2(\Omega)}.
\end{align*}
This concludes the proof.
\end{proof}

\subsection{Energy norms}\label{subsection:energynorms}
From~\eqref{eq:energyscalar}, recall for given $\mmu_h \in \magnetizationset_h$ the energy-scalar product
\begin{align}\label{eq:energyscalar_inprove}
\energydual{\vectorX}{\vectorY}{\mmu_h} & := \vectorX \cdot \big( \, \matrixP_{\matrixQ}[\mmu_h] \, \big)^{-1}\vectorY
\quad \textrm{for all } \vectorX, \vectorY \in \R^{2N},
\end{align}
with the corresponding energy norm $|\!|\!|\cdot|\!|\!|_{\mmu_h}$.
With the definitions of the matrices from Section~\ref{section:linalg} as well as the definition~\eqref{eq:defPPhmh2} of $\widetilde{\mathbb{P}}_h(\cdot)$, it follows that
\begin{align}\label{eq:energynormsinprove}
\energydual{\vectorX}{\vectorY}{\mmu_h} & = 
\alpha_{\matrixP} \, \dual{\widetilde{\mathbb{P}}[\mmu_h]\vectorX}{\widetilde{\mathbb{P}}[\mmu_h]\vectorY}{\Omega}
+
\beta(k) k \, \dual{\nabla \widetilde{\mathbb{P}}[\mmu_h]\vectorX}{\nabla \widetilde{\mathbb{P}}[\mmu_h]\vectorY}{\Omega}.
\end{align}
This section collects equivalence results for varying arguments $\mmu_h$ in $\energydual{\cdot}{\cdot}{\mmu_h}$ and $|\!|\!|\cdot|\!|\!|_{\mmu_h}$.

\begin{lemma}\label{lemma:energynorms_brachial}
Let $\mmu_h, \nnu_h \in \magnetizationset_h$ and
\begin{subequations}\label{eq:defkappa_tilde_overall}
\begin{align}\label{eq:defkappa_tilde}
\widetilde{\kappa}(\mmu_h,\nnu_h,h^{-2}) := 
\bigg( \, 1 + 
\frac{\beta(k)k}{\alpha_{\matrixP} h^2}
\max_{z \in \NN_h} \matrixnorm{\matrixH[\matrixT_n\mmu_h(z)] - \matrixH[\matrixT_n\nnu_h(z)]}^2 \, \bigg)^{1/2} \geq 1.
\end{align}
Then, $\widetilde{\kappa}(\mmu_h,\nnu_h,h^{-2}) = \widetilde{\kappa}(\nnu_h,\mmu_h,h^{-2})$ and there exists a constant $C\geq1$, which depends only on $\Cmesh$, such that
\begin{align}
C^{-1} \, \widetilde{\kappa}(\mmu_h,\nnu_h,h^{-2})^{-1}
\, |\!|\!|\vectorX|\!|\!|_{\nnu_h}
\leq 
|\!|\!|\vectorX|\!|\!|_{\mmu_h} \leq 
C \,
\widetilde{\kappa}(\mmu_h,\nnu_h,h^{-2}) 
\, |\!|\!|\vectorX|\!|\!|_{\nnu_h} 
\quad \textrm{for all } \vectorX \in \R^{2N}.
\end{align}
\end{subequations}
\end{lemma}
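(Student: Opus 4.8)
The plan is the following. The symmetry $\widetilde{\kappa}(\mmu_h,\nnu_h,h^{-2}) = \widetilde{\kappa}(\nnu_h,\mmu_h,h^{-2})$ is immediate from $\matrixnorm{\matrixA - \matrixB} = \matrixnorm{\matrixB - \matrixA}$ in~\eqref{eq:defkappa_tilde}. In view of this symmetry and since $\widetilde{\kappa}(\mmu_h,\nnu_h,h^{-2}) \geq 1$, it suffices to prove only the upper estimate $|\!|\!|\vectorX|\!|\!|_{\mmu_h} \leq C\, \widetilde{\kappa}(\mmu_h,\nnu_h,h^{-2}) \, |\!|\!|\vectorX|\!|\!|_{\nnu_h}$ for all $\vectorX \in \R^{2N}$: exchanging the roles of $\mmu_h$ and $\nnu_h$ then gives $|\!|\!|\vectorX|\!|\!|_{\nnu_h} \leq C\, \widetilde{\kappa}(\nnu_h,\mmu_h,h^{-2})\, |\!|\!|\vectorX|\!|\!|_{\mmu_h} = C\, \widetilde{\kappa}(\mmu_h,\nnu_h,h^{-2})\, |\!|\!|\vectorX|\!|\!|_{\mmu_h}$, which after division is the lower estimate.

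For the upper estimate, I would work with squared norms and use the representation~\eqref{eq:energynormsinprove}, so that $|\!|\!|\vectorX|\!|\!|_{\mmu_h}^2 = \alpha_{\matrixP}\, \norm{\widetilde{\mathbb{P}}_h[\mmu_h]\vectorX}{\LL^2(\Omega)}^2 + \beta(k)k\, \norm{\nabla \widetilde{\mathbb{P}}_h[\mmu_h]\vectorX}{\LL^2(\Omega)}^2$. The $\LL^2$-term is controlled directly by Lemma~\ref{lemma:l2norms}{\rm (v)}, namely $\alpha_{\matrixP}\norm{\widetilde{\mathbb{P}}_h[\mmu_h]\vectorX}{\LL^2(\Omega)}^2 \lesssim \alpha_{\matrixP}\norm{\widetilde{\mathbb{P}}_h[\nnu_h]\vectorX}{\LL^2(\Omega)}^2 \leq |\!|\!|\vectorX|\!|\!|_{\nnu_h}^2$, where the last step uses~\eqref{eq:energynormsinprove} again. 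For the gradient term, a triangle inequality combined with $(a+b)^2 \lesssim a^2+b^2$ gives $\beta(k)k\,\norm{\nabla \widetilde{\mathbb{P}}_h[\mmu_h]\vectorX}{\LL^2(\Omega)}^2 \lesssim \beta(k)k\,\norm{\nabla \widetilde{\mathbb{P}}_h[\nnu_h]\vectorX}{\LL^2(\Omega)}^2 + \beta(k)k\,\norm{\nabla (\widetilde{\mathbb{P}}_h[\mmu_h] - \widetilde{\mathbb{P}}_h[\nnu_h])\vectorX}{\LL^2(\Omega)}^2$, the first summand being $\leq |\!|\!|\vectorX|\!|\!|_{\nnu_h}^2$ once more by~\eqref{eq:energynormsinprove}, so that only the difference term remains.

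This difference term is the single genuinely nontrivial point, and the reason for the lemma's qualifier: since we do \emph{not} assume the angle condition $1 + (\matrixT_n\mmu_h(z))_3 \geq \gamma$, the refined $\HH^1$-difference estimate of Lemma~\ref{lemma:superfancylemma2} is unavailable, so I would simply trade the gradient for an extra factor $h^{-1}$. As $(\widetilde{\mathbb{P}}_h[\mmu_h] - \widetilde{\mathbb{P}}_h[\nnu_h])\vectorX \in \courantspace_h$ and $\TT_h$ is $\Cmesh$-quasi-uniform, an inverse estimate gives $\norm{\nabla (\widetilde{\mathbb{P}}_h[\mmu_h] - \widetilde{\mathbb{P}}_h[\nnu_h])\vectorX}{\LL^2(\Omega)}^2 \lesssim h^{-2}\, \norm{(\widetilde{\mathbb{P}}_h[\mmu_h] - \widetilde{\mathbb{P}}_h[\nnu_h])\vectorX}{\LL^2(\Omega)}^2$; Lemma~\ref{lemma:l2norms}{\rm (vi)} bounds the right-hand side by $h^{-2}\cdot h^{3} \max_{z\in\NN_h}\matrixnorm{\matrixH[\matrixT_n\mmu_h(z)] - \matrixH[\matrixT_n\nnu_h(z)]}^2\, |\vectorX|^2$, and Lemma~\ref{lemma:l2norms}{\rm (iv)} together with~\eqref{eq:energynormsinprove} gives $h^{3}|\vectorX|^2 \lesssim \norm{\widetilde{\mathbb{P}}_h[\nnu_h]\vectorX}{\LL^2(\Omega)}^2 \leq \alpha_{\matrixP}^{-1}\, |\!|\!|\vectorX|\!|\!|_{\nnu_h}^2$. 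Chaining these, $\beta(k)k\,\norm{\nabla(\widetilde{\mathbb{P}}_h[\mmu_h] - \widetilde{\mathbb{P}}_h[\nnu_h])\vectorX}{\LL^2(\Omega)}^2 \lesssim \tfrac{\beta(k)k}{\alpha_{\matrixP}h^2}\max_{z\in\NN_h}\matrixnorm{\matrixH[\matrixT_n\mmu_h(z)] - \matrixH[\matrixT_n\nnu_h(z)]}^2\, |\!|\!|\vectorX|\!|\!|_{\nnu_h}^2$. Adding the three contributions yields $|\!|\!|\vectorX|\!|\!|_{\mmu_h}^2 \lesssim \bigl(1 + \tfrac{\beta(k)k}{\alpha_{\matrixP}h^2}\max_{z}\matrixnorm{\matrixH[\matrixT_n\mmu_h(z)] - \matrixH[\matrixT_n\nnu_h(z)]}^2\bigr)\, |\!|\!|\vectorX|\!|\!|_{\nnu_h}^2 = \widetilde{\kappa}(\mmu_h,\nnu_h,h^{-2})^2\, |\!|\!|\vectorX|\!|\!|_{\nnu_h}^2$, and taking square roots finishes the upper estimate; the implied constant depends only on $\Cmesh$, through Lemma~\ref{lemma:l2norms} and the inverse estimate. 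The only obstacle is the gradient-difference control just described; everything else is bookkeeping with the already-established $\LL^2$-stability bounds.
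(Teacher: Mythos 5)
Your proposal is correct and follows essentially the same route as the paper's proof: the reduction by symmetry to one inequality, the split of the energy norm via~\eqref{eq:energynormsinprove} with Lemma~\ref{lemma:l2norms}~{\rm (v)} for the $\LL^2$-part, and the inverse estimate combined with Lemma~\ref{lemma:l2norms}~{\rm (iv)} and~{\rm (vi)} for the gradient-difference term are exactly the steps used there. No gaps.
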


\begin{proof}
Let $\vectorX \in \R^{2N}$. Since the symmetry $\widetilde{\kappa}(\mmu_h,\nnu_h,h^{-2}) = \widetilde{\kappa}(\nnu_h,\mmu_h,h^{-2})$ is obvious, we only have to show that $|\!|\!|\vectorX|\!|\!|_{\mmu_h} \lesssim 
\widetilde{\kappa}(\mmu_h,\nnu_h,h^{-2}) |\!|\!|\vectorX|\!|\!|_{\nnu_h}$. To this end, Lemma~\ref{lemma:l2norms}~{\rm (v)} and an inverse estimate yield that
\begin{align*}
&|\!|\!|\vectorX|\!|\!|_{\mmu_h}^2 
\stackrel{\eqref{eq:energynormsinprove}}{=}
\alpha_{\matrixP} \norm{\widetilde{\mathbb{P}}_h[\mmu_h] \vectorX}{\LL^2(\Omega)}^2 + 
\beta(k) k \,  \norm{\nabla \widetilde{\mathbb{P}}_h[\mmu_h] \vectorX}{\LL^2(\Omega)}^2 \\
& \quad \stackrel{\phantom{\eqref{eq:energynormsinprove}}}{\lesssim}
\alpha_{\matrixP} \norm{\widetilde{\mathbb{P}}_h[\nnu_h] \vectorX}{\LL^2(\Omega)}^2 + 
\beta(k) k \, \norm{\nabla \widetilde{\mathbb{P}}_h[\nnu_h] \vectorX}{\LL^2(\Omega)}^2 +
\beta(k) k \, \norm{\nabla \widetilde{\mathbb{P}}_h[\mmu_h] \vectorX - \nabla \widetilde{\mathbb{P}}_h[\nnu_h] \vectorX}{\LL^2(\Omega)}^2 \\
& \quad \stackrel{\phantom{\eqref{eq:energynormsinprove}}}{\lesssim} 
|\!|\!|\vectorX|\!|\!|_{\nnu_h}^2 + 
\beta(k) k h^{-2} 
\norm{\widetilde{\mathbb{P}}_h[\mmu_h] \vectorX - \widetilde{\mathbb{P}}_h[\nnu_h] \vectorX}{\LL^2(\Omega)}^2.
\end{align*}
With Lemma~\ref{lemma:l2norms}~{\rm (iv)} and~{\rm (vi)}, we estimate the last term by
\begin{align*}
& \beta(k) k h^{-2} \,
\norm{\widetilde{\mathbb{P}}_h[\mmu_h] \vectorX - \widetilde{\mathbb{P}}_h[\nnu_h] \vectorX}{\LL^2(\Omega)}^2
\lesssim
\beta(k) k h \, | \vectorX |^2
\max_{z \in \NN_h} \matrixnorm{\matrixH[\matrixT_n\mmu_h(z)] - \matrixH[\matrixT_n\nnu_h(z)]}^2  \\
& \quad \stackrel{\phantom{\eqref{eq:energynormsinprove}}}{\simeq}
\beta(k) k h^{-2} \, \norm{\widetilde{\mathbb{P}}_h[\nnu_h] \vectorX}{\LL^2(\Omega)}^2 \,
\max_{z \in \NN_h} \matrixnorm{\matrixH[\matrixT_n\mmu_h(z)] - \matrixH[\matrixT_n\nnu_h(z)]}^2 \\
& \quad \stackrel{\eqref{eq:energynormsinprove}}{\lesssim}
\beta(k) \alpha_{\matrixP}^{-1} k h^{-2} \, |\!|\!|\vectorX|\!|\!|_{\nnu_h}^2
\max_{z \in \NN_h} \matrixnorm{\matrixH[\matrixT_n\mmu_h(z)] - \matrixH[\matrixT_n\nnu_h(z)]}^2.
\end{align*}
This proves $|\!|\!|\vectorX|\!|\!|_{\mmu_h} \lesssim \widetilde{\kappa}(\mmu_h,\nnu_h,h^{-2}) |\!|\!|\vectorX|\!|\!|_{\nnu_h}$ and hence concludes the proof.
\end{proof}

For certain $\mmu_h, \nnu_h \in \magnetizationset_h$, the norm equivalence $|\!|\!|\cdot|\!|\!|_{\mmu_h} \simeq |\!|\!|\cdot|\!|\!|_{\nnu_h}$ holds independently of the mesh-size $h$.

\begin{lemma}\label{lemma:energynorms_new}
Let $\mmu_h, \nnu_h \in \magnetizationset_h$ with $1 + (\matrixT_n\mmu_h(z))_3 \geq \gamma > 0$ and $1 + (\matrixT_n\nnu_h(z))_3 \geq \gamma > 0$ for all nodes $z \in \NN_h$. Let
\begin{align}\label{eq:defkappa}
\begin{split}
\kappa(\mmu_h,\nnu_h) & := \Big[ \, 1 
+ 
\gamma^{-2} \, \norm{\mmu_h - \nnu_h}{\LL^\infty(\Omega)}^2  
+  
\frac{\beta(k) k}{\alpha_{\matrixP} \gamma^{2}} \, \norm{\nabla \mmu_h - \nabla \nnu_h}{\LL^\infty(\Omega)}^2 \\
& \qquad + 
\frac{\beta(k) k}{\alpha_{\matrixP} \gamma^{6}} \,
\big( \, \norm{\nabla \mmu_h}{\LL^\infty(\Omega)}^2 + \norm{\nabla \nnu_h}{\LL^\infty(\Omega)}^2 \, \big) \,
\norm{\mmu_h - \nnu_h}{\LL^{\infty}(\Omega)}^2 \,
\Big]^{1/2} \geq 1.
\end{split}
\end{align}
Then, $\kappa(\mmu_h,\nnu_h) = \kappa(\nnu_h,\mmu_h)$ and there exists $C\geq1$ depending only on $\Cmesh$ such that
\begin{align*}
C^{-1} \, \kappa(\mmu_h,\nnu_h)^{-1} \, |\!|\!|\vectorX|\!|\!|_{\nnu_h} 
\leq 
|\!|\!|\vectorX|\!|\!|_{\mmu_h} \leq 
C \, \kappa(\mmu_h,\nnu_h) \, |\!|\!|\vectorX|\!|\!|_{\nnu_h} \quad \textrm{for all } \vectorX \in \R^{2N}.
\end{align*}
\end{lemma}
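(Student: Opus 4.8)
The plan is to reduce the statement to the stability and continuity results already established, since $|\!|\!|\vectorX|\!|\!|_{\mmu_h}^2 = \alpha_{\matrixP}\norm{\widetilde{\mathbb{P}}_h[\mmu_h]\vectorX}{\LL^2(\Omega)}^2 + \beta(k)k\,\norm{\nabla\widetilde{\mathbb{P}}_h[\mmu_h]\vectorX}{\LL^2(\Omega)}^2$ by~\eqref{eq:energynormsinprove}. The symmetry $\kappa(\mmu_h,\nnu_h)=\kappa(\nnu_h,\mmu_h)$ is immediate from~\eqref{eq:defkappa}, so it suffices to prove $|\!|\!|\vectorX|\!|\!|_{\mmu_h}\lesssim\kappa(\mmu_h,\nnu_h)\,|\!|\!|\vectorX|\!|\!|_{\nnu_h}$ for all $\vectorX\in\R^{2N}$; interchanging $\mmu_h\leftrightarrow\nnu_h$ then yields the lower bound. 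Fix $\vectorX$, set $\ww_h:=\sum_{i=1}^{2N}\vectorX_i\ppsi_i\in(\SS_h)^2$, and recall from Lemma~\ref{lemma:representations}{\rm(i)} that $\widetilde{\mathbb{P}}_h[\mmu_h]\vectorX=\mathbb{P}_h[\mmu_h]\ww_h$ and $\widetilde{\mathbb{P}}_h[\nnu_h]\vectorX=\mathbb{P}_h[\nnu_h]\ww_h$. Writing $\widetilde{\mathbb{P}}_h[\mmu_h]\vectorX=\widetilde{\mathbb{P}}_h[\nnu_h]\vectorX+(\mathbb{P}_h[\mmu_h]-\mathbb{P}_h[\nnu_h])\ww_h$ and applying the triangle inequality together with Lemma~\ref{lemma:l2norms}{\rm(v)} for the mass term, one obtains
\begin{align*}
|\!|\!|\vectorX|\!|\!|_{\mmu_h}^2 & \lesssim \alpha_{\matrixP}\norm{\widetilde{\mathbb{P}}_h[\nnu_h]\vectorX}{\LL^2(\Omega)}^2 + \beta(k)k\,\norm{\nabla\widetilde{\mathbb{P}}_h[\nnu_h]\vectorX}{\LL^2(\Omega)}^2 \\
& \qquad + \beta(k)k\,\norm{\nabla\big(\mathbb{P}_h[\mmu_h]-\mathbb{P}_h[\nnu_h]\big)\ww_h}{\LL^2(\Omega)}^2 .
\end{align*}
The first two terms are $\simeq|\!|\!|\vectorX|\!|\!|_{\nnu_h}^2$ by~\eqref{eq:energynormsinprove}, so the whole task is to bound the last term by $\kappa(\mmu_h,\nnu_h)^2\,|\!|\!|\vectorX|\!|\!|_{\nnu_h}^2$.

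For that, I would apply Lemma~\ref{lemma:superfancylemma2} to $(\mathbb{P}_h[\mmu_h]-\mathbb{P}_h[\nnu_h])\ww_h$, producing terms that involve $\norm{\ww_h}{\LL^2(\Omega)}$ and $\norm{\nabla\ww_h}{\LL^2(\Omega)}$. The $\LL^2$-norm is controlled via Lemma~\ref{lemma:l2norms}{\rm(i)}: $\norm{\ww_h}{\LL^2(\Omega)}\simeq\norm{\mathbb{P}_h[\nnu_h]\ww_h}{\LL^2(\Omega)}=\norm{\widetilde{\mathbb{P}}_h[\nnu_h]\vectorX}{\LL^2(\Omega)}\leq\alpha_{\matrixP}^{-1/2}|\!|\!|\vectorX|\!|\!|_{\nnu_h}$ by~\eqref{eq:energynormsinprove}. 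The gradient of $\ww_h$ is the delicate factor: using Lemma~\ref{lemma:representations}{\rm(iv)} I would write $\ww_h=\mathbb{P}_h^T[\nnu_h]\big(\mathbb{P}_h[\nnu_h]\ww_h\big)$ and apply Lemma~\ref{lemma:superfancylemma}{\rm(ii)} with $\vv_h:=\mathbb{P}_h[\nnu_h]\ww_h\in\courantspace_h$ to get $\norm{\nabla\ww_h}{\LL^2(\Omega)}\lesssim\gamma^{-2}\norm{\nabla\nnu_h}{\LL^\infty(\Omega)}\norm{\widetilde{\mathbb{P}}_h[\nnu_h]\vectorX}{\LL^2(\Omega)}+\norm{\nabla\widetilde{\mathbb{P}}_h[\nnu_h]\vectorX}{\LL^2(\Omega)}$, and then bound the two factors on the right by $\alpha_{\matrixP}^{-1/2}|\!|\!|\vectorX|\!|\!|_{\nnu_h}$ and $(\beta(k)k)^{-1/2}|\!|\!|\vectorX|\!|\!|_{\nnu_h}$ respectively, again via~\eqref{eq:energynormsinprove}.

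Substituting these two bounds into the estimate of Lemma~\ref{lemma:superfancylemma2}, multiplying through by $(\beta(k)k)^{1/2}$, squaring, and using $(a+b)^2\le 2a^2+2b^2$ repeatedly, each resulting summand matches — up to a $\Cmesh$-dependent constant — one of the four terms in $\kappa(\mmu_h,\nnu_h)^2$ from~\eqref{eq:defkappa}: the $\gamma^{-1}\norm{\nabla\mmu_h-\nabla\nnu_h}{\LL^\infty(\Omega)}\norm{\ww_h}{\LL^2(\Omega)}$ contribution feeds $\tfrac{\beta(k)k}{\alpha_{\matrixP}\gamma^2}\norm{\nabla\mmu_h-\nabla\nnu_h}{\LL^\infty(\Omega)}^2$; the $\gamma^{-1}\norm{\mmu_h-\nnu_h}{\LL^\infty(\Omega)}\norm{\nabla\ww_h}{\LL^2(\Omega)}$ contribution splits into a $\gamma^{-2}\norm{\mmu_h-\nnu_h}{\LL^\infty(\Omega)}^2$ part (from the $(\beta(k)k)^{-1/2}$ piece of the $\norm{\nabla\ww_h}{}$-bound) and a $\tfrac{\beta(k)k}{\alpha_{\matrixP}\gamma^6}\norm{\nabla\nnu_h}{\LL^\infty(\Omega)}^2\norm{\mmu_h-\nnu_h}{\LL^\infty(\Omega)}^2$ part (from the other piece); and the last term of Lemma~\ref{lemma:superfancylemma2} feeds $\tfrac{\beta(k)k}{\alpha_{\matrixP}\gamma^6}\big(\norm{\nabla\mmu_h}{\LL^\infty(\Omega)}^2+\norm{\nabla\nnu_h}{\LL^\infty(\Omega)}^2\big)\norm{\mmu_h-\nnu_h}{\LL^\infty(\Omega)}^2$. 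Together with the $1$ coming from the first two terms of the displayed inequality, this gives $|\!|\!|\vectorX|\!|\!|_{\mmu_h}^2\lesssim\kappa(\mmu_h,\nnu_h)^2\,|\!|\!|\vectorX|\!|\!|_{\nnu_h}^2$, and swapping the roles of $\mmu_h$ and $\nnu_h$ closes the argument.

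The main obstacle is the estimate of $\norm{\nabla\ww_h}{\LL^2(\Omega)}$ in terms of $|\!|\!|\vectorX|\!|\!|_{\nnu_h}$ \emph{without} incurring an inverse-estimate factor $h^{-1}$; this is exactly where the $\HH^1$-stability of $\mathbb{P}_h^T[\nnu_h]$ (Lemma~\ref{lemma:superfancylemma}{\rm(ii)}) is indispensable and where the extra weights $\gamma^{-2}\norm{\nabla\nnu_h}{\LL^\infty(\Omega)}$, and consequently the $\gamma^{-6}$ powers in $\kappa$, enter. Everything else is careful bookkeeping of which powers of $\gamma$, $\alpha_{\matrixP}$, and $\beta(k)k$ each term carries, so that all contributions land inside the prescribed constant $\kappa(\mmu_h,\nnu_h)$.
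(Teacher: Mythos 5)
Your proposal is correct and follows essentially the same route as the paper's proof: decompose $\widetilde{\mathbb{P}}_h[\mmu_h]\vectorX$ as $\widetilde{\mathbb{P}}_h[\nnu_h]\vectorX$ plus a difference term, bound the gradient of the difference with Lemma~\ref{lemma:superfancylemma2}, control $\norm{\ww_h}{\LL^2(\Omega)}$ via the $\LL^2$-stability of Lemma~\ref{lemma:l2norms} and $\norm{\nabla\ww_h}{\LL^2(\Omega)}$ via Lemma~\ref{lemma:superfancylemma}(ii), and bookkeep the powers of $\gamma$, $\alpha_\matrixP$, $\beta(k)k$ to recover $\kappa$. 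The paper phrases this through the composition identity $\widetilde{\mathbb{P}}_h[\mmu_h]=\widetilde{\mathbb{P}}_h[\nnu_h]+(\mathbb{P}_h[\mmu_h]-\mathbb{P}_h[\nnu_h])\circ\mathbb{P}_h^T[\nnu_h]\circ\widetilde{\mathbb{P}}_h[\nnu_h]$ and keeps the composed notation when applying Lemma~\ref{lemma:l2norms}(ii), whereas you simplify to $\ww_h$ first via Lemma~\ref{lemma:representations}(iv) and use Lemma~\ref{lemma:l2norms}(i) instead — but these are the same objects and the same estimates, so the two proofs coincide.
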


\begin{proof}
Let $\vectorX \in \R^{2N}$. Since the symmetry $\kappa(\mmu_h,\nnu_h) = \kappa(\nnu_h,\mmu_h)$ is obvious, we only have to show that $|\!|\!|\vectorX|\!|\!|_{\mmu_h} \lesssim \kappa(\mmu_h,\nnu_h) |\!|\!|\vectorX|\!|\!|_{\nnu_h}$. With Lemma~\ref{lemma:representations}, we get that
\begin{subequations}\label{eq:composition_norms_new}
\begin{align}\label{eq:composition2_new}
\!\!\!\!\! \widetilde{\mathbb{P}}_h[\mmu_h] 
= \widetilde{\mathbb{P}}_h[\nnu_h] + \big( \, \mathbb{P}_h[\mmu_h] - \mathbb{P}_h[\nnu_h] \, \big) 
\circ \mathbb{P}_h^T[\nnu_h] \circ \widetilde{\mathbb{P}}_h[\nnu_h].
\end{align}
With Lemma~\ref{lemma:superfancylemma2}, Lemma~\ref{lemma:l2norms}~{\rm (ii)}, and Lemma~\ref{lemma:superfancylemma}~{\rm (ii)}, we get that
\begin{align}\label{eq:composition_new_h1}
&
\norm{ \nabla
\big( \, \mathbb{P}_h[\mmu_h] - \mathbb{P}_h[\nnu_h] \, \big) 
\circ \mathbb{P}_h^T[\nnu_h] \circ \widetilde{\mathbb{P}}_h[\nnu_h] \, \vectorX
}{\LL^2(\Omega)} 
\notag \\
& \quad
\stackrel{\eqref{eq:superfancyestimate}}{\lesssim} \gamma^{-1} \, \norm{\nabla \mmu_h - \nabla \nnu_h}{\LL^\infty(\Omega)} \, \norm{\mathbb{P}_h^T[\nnu_h] \circ \widetilde{\mathbb{P}}_h[\nnu_h] \, \vectorX}{\LL^2(\Omega)} 
\notag \\
& \qquad + 
\, \gamma^{-1} \, \norm{\mmu_h - \nnu_h}{\LL^\infty(\Omega)} \, 
\norm{\nabla \big [ \, \mathbb{P}_h^T[\nnu_h] \circ \widetilde{\mathbb{P}}_h[\nnu_h] \, \vectorX \, \big]}{\LL^2(\Omega)} \notag \\
& \qquad + \, \gamma^{-3} \,
\big( \, \norm{\nabla \mmu_h}{\LL^\infty(\Omega)} + \norm{\nabla \nnu_h}{\LL^\infty(\Omega)} \, \big) \,
\norm{\mmu_h - \nnu_h}{\LL^{\infty}(\Omega)} \,
\norm{\mathbb{P}_h^T[\nnu_h] \circ \widetilde{\mathbb{P}}_h[\nnu_h] \, \vectorX}{\LL^2(\Omega)} \notag \\
& \quad \stackrel{\phantom{\eqref{eq:superfancyestimate}}}{\lesssim}
\, \gamma^{-1} \, \norm{\nabla \mmu_h - \nabla \nnu_h}{\LL^\infty(\Omega)} \, 
\norm{\widetilde{\mathbb{P}}_h[\nnu_h] \, \vectorX}{\LL^2(\Omega)}
 + \gamma^{-1} \, \norm{\mmu_h - \nnu_h}{\LL^\infty(\Omega)} \, 
\norm{\nabla \widetilde{\mathbb{P}}_h[\nnu_h] \, \vectorX}{\LL^2(\Omega)}
\notag \\
& \qquad + \gamma^{-3} \,
\big( \, \norm{\nabla \mmu_h}{\LL^\infty(\Omega)} + \norm{\nabla \nnu_h}{\LL^\infty(\Omega)} \, \big) \,
\norm{\mmu_h - \nnu_h}{\LL^{\infty}(\Omega)} \,
\norm{\widetilde{\mathbb{P}}_h[\nnu_h] \, \vectorX}{\LL^2(\Omega)}.
\end{align}
\end{subequations}
Recalling $\norm{ \mathbb{P}_h[\mmu_h] \vectorX }{\LL^2(\Omega)} \simeq \norm{ \mathbb{P}_h[\nnu_h] \vectorX }{\LL^2(\Omega)}$ from Lemma~\ref{lemma:l2norms}~{\rm (v)}, we obtain that
\begin{align*}
|\!|\!|\vectorX|\!|\!|_{\mmu_h}^2
& \stackrel{\eqref{eq:composition_norms_new}}{\lesssim} 
\alpha_{\matrixP} \,
\,
\norm{\widetilde{\mathbb{P}}_h[\nnu_h] \vectorX}{\LL^2(\Omega)}^2 + \beta(k) k \,
\Big[ \,
\gamma^{-2} \, \norm{\nabla \mmu_h - \nabla \nnu_h}{\LL^\infty(\Omega)}^2 \\
& \qquad +
\gamma^{-6} \,
\big( \, \norm{\nabla \mmu_h}{\LL^\infty(\Omega)}^2 + \norm{\nabla \nnu_h}{\LL^\infty(\Omega)}^2 \, \big) \,
\norm{\mmu_h - \nnu_h}{\LL^{\infty}(\Omega)}^2 \,
\Big] \,
\norm{\widetilde{\mathbb{P}}_h[\nnu_h] \vectorX}{\LL^2(\Omega)}^2 \\
&  \quad + \beta(k) k \, \Big[ \, 
1 + \gamma^{-2} \, \norm{\mmu_h - \nnu_h}{\LL^\infty(\Omega)}^2
 \, \Big] 
\norm{\nabla \widetilde{\mathbb{P}}_h[\nnu_h] \vectorX}{\LL^2(\Omega)}^2\\
& \stackrel{\eqref{eq:composition_norms_new}}{\leq}  \Big[ \, 1 
+ 
\gamma^{-2} \, \norm{\mmu_h - \nnu_h}{\LL^\infty(\Omega)}^2  
+  
\frac{\beta(k) k}{\alpha_{\matrixP} \gamma^{2}} \, \norm{\nabla \mmu_h - \nabla \nnu_h}{\LL^\infty(\Omega)}^2 \\
& \qquad + 
\frac{\beta(k) k}{\alpha_{\matrixP} \gamma^{6}} \,
\big( \, \norm{\nabla \mmu_h}{\LL^\infty(\Omega)}^2 + \norm{\nabla \nnu_h}{\LL^\infty(\Omega)}^2 \, \big) \,
\norm{\mmu_h - \nnu_h}{\LL^{\infty}(\Omega)}^2 \,
\Big] \, |\!|\!|\vectorX|\!|\!|_{\nnu_h}^2 .
\end{align*}
This proves $|\!|\!|\vectorX|\!|\!|_{\mmu_h} \lesssim \kappa(\nnu_h,\mmu_h) |\!|\!|\vectorX|\!|\!|_{\nnu_h}$ and concludes the proof.
\end{proof}

\begin{lemma}\label{lemma:spectral}
Let $\mmu_h, \nnu_h \in \magnetizationset_h$. There exists a constant $C>1$, which depends only on $\Cmesh > 0$, such that the following two assertions {\rm (i)}--{\rm (ii)} hold true:

{\rm (i)} With $\widetilde{\kappa}(\mmu_h,\nnu_h,h^{-2})$ from~\eqref{eq:defkappa_tilde}, it holds that, for all $\vectorX, \vectorY \in \R^{2N}$,
\begin{subequations}
\begin{align}\label{eq:ellipticity}
\vectorX \cdot \matrixA_{\matrixQ}[\mmu_h] \vectorX 
& \, \geq \,
C^{-1} \, \frac{\alpha}{\alpha_{\matrixP}} \,
\widetilde{\kappa}(\mmu_h,\nnu_h,h^{-2})^{-2} \,
|\!|\!|\vectorX|\!|\!|_{\nnu_h}^2 ,
\quad \textrm{and}\\
\label{eq:continuity}
\vectorX \cdot \matrixA_{\matrixQ}[\mmu_h] \vectorY 
& \, \leq \, 
C \, \bigg( \, 1 
+ \frac{1}{\alpha_{\matrixP}} 
+ \frac{ \norm{\weight_k - \alpha_{\matrixP}}{L^{\infty}(\Omega)} }{ \alpha_{\matrixP} } \, \bigg) \, 
\widetilde{\kappa}(\mmu_h,\nnu_h,h^{-2})^{2} \,
|\!|\!|\vectorX|\!|\!|_{\nnu_h}
|\!|\!|\vectorY|\!|\!|_{\nnu_h}.
\end{align}
\end{subequations}

{\rm (ii)} If, additionally, $1 + (\matrixT_n\mmu_h(z))_3 \geq \gamma > 0$ and $1 + (\matrixT_n\nnu_h(z))_3 \geq \gamma > 0$ for all nodes $z \in \NN_h$, the statement of {\rm (i)} holds with $\kappa(\mmu_h,\nnu_h)$ from~\eqref{eq:defkappa} instead of $\widetilde{\kappa}(\mmu_h,\nnu_h,h^{-2})$ from~\eqref{eq:defkappa_tilde}. In particular, the estimate then is independent of the mesh-size $h$.
\end{lemma}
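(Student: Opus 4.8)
The plan is to rewrite the bilinear pairing with $\matrixA_{\matrixQ}[\mmu_h]$ as the bilinear form of Algorithm~\ref{alg:abtps} restricted to $\tps{\mmu_h}$, estimate it against the energy norm $|\!|\!|\cdot|\!|\!|_{\mmu_h}$, and then transfer everything to $|\!|\!|\cdot|\!|\!|_{\nnu_h}$ using the norm equivalences already established in Lemma~\ref{lemma:energynorms_brachial} (for~(i)) and Lemma~\ref{lemma:energynorms_new} (for~(ii)). Concretely, for $\vectorX,\vectorY \in \R^{2N}$ and $\vv_h := \widetilde{\mathbb{P}}_h[\mmu_h]\vectorX$, $\ww_h := \widetilde{\mathbb{P}}_h[\mmu_h]\vectorY \in \tps{\mmu_h}$, the definitions of $\matrixM_{k}[\mmu_h]$, $\matrixL$, $\matrixS[\mmu_h]$ from Section~\ref{section:linalg}, the block forms~\eqref{eq:blockfrom}, and the orthonormality of the columns of the diagonal blocks of $\matrixQ[\mmu_h]$ yield
$$
\vectorX\cdot\matrixA_{\matrixQ}[\mmu_h]\vectorY
=\dual{\weight_k\vv_h}{\ww_h}{\Omega}+\beta(k)k\,\dual{\nabla\vv_h}{\nabla\ww_h}{\Omega}-\dual{\mmu_h\times\vv_h}{\ww_h}{\Omega},
$$
while, by~\eqref{eq:energynormsinprove}, $|\!|\!|\vectorX|\!|\!|_{\mmu_h}^2=\alpha_{\matrixP}\norm{\vv_h}{\LL^2(\Omega)}^2+\beta(k)k\norm{\nabla\vv_h}{\LL^2(\Omega)}^2$, and similarly for $\ww_h$.

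For the ellipticity~\eqref{eq:ellipticity} I take $\vectorY=\vectorX$; the triple-product term vanishes since $(\mmu_h\times\vv_h)\cdot\vv_h=0$ pointwise. Using $\weight_k\geq\alpha/2$ from~\eqref{eq:stabilizations} (and $\alpha\le\alpha_{\matrixP}$), one obtains $\vectorX\cdot\matrixA_{\matrixQ}[\mmu_h]\vectorX\geq\tfrac{\alpha}{2\alpha_{\matrixP}}\,|\!|\!|\vectorX|\!|\!|_{\mmu_h}^2$, and Lemma~\ref{lemma:energynorms_brachial} then converts $|\!|\!|\vectorX|\!|\!|_{\mmu_h}^2$ into $\widetilde{\kappa}(\mmu_h,\nnu_h,h^{-2})^{-2}|\!|\!|\vectorX|\!|\!|_{\nnu_h}^2$ up to a constant depending only on $\Cmesh$.

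For the continuity~\eqref{eq:continuity} I estimate the three terms separately: the weighted mass term via $\norm{\weight_k}{L^\infty(\Omega)}\leq\alpha_{\matrixP}+\norm{\weight_k-\alpha_{\matrixP}}{L^\infty(\Omega)}$ together with $\norm{\vv_h}{\LL^2(\Omega)}\leq\alpha_{\matrixP}^{-1/2}|\!|\!|\vectorX|\!|\!|_{\mmu_h}$ (and likewise for $\ww_h$), which produces the factor $1+\norm{\weight_k-\alpha_{\matrixP}}{L^\infty(\Omega)}/\alpha_{\matrixP}$; the stiffness term directly by $|\!|\!|\vectorX|\!|\!|_{\mmu_h}|\!|\!|\vectorY|\!|\!|_{\mmu_h}$; and the cross-product term via $\norm{\mmu_h}{\LL^\infty(\Omega)}\leq1$ (piecewise affine with unit nodal values) and the same $\LL^2$-bounds, which produces the factor $1/\alpha_{\matrixP}$. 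Summing and applying Lemma~\ref{lemma:energynorms_brachial} once to $\vectorX$ and once to $\vectorY$ gives~\eqref{eq:continuity}. Part~(ii) follows by repeating both estimates verbatim but invoking Lemma~\ref{lemma:energynorms_new} instead of Lemma~\ref{lemma:energynorms_brachial}, so that $\widetilde{\kappa}(\mmu_h,\nnu_h,h^{-2})$ is replaced by the $h$-independent factor $\kappa(\mmu_h,\nnu_h)$.

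I do not expect a serious obstacle: the genuinely delicate work—the discrete $\HH^1$-stability and $\HH^1$-continuity of $\mathbb{P}_h[\cdot]$ and $\widetilde{\mathbb{P}}_h[\cdot]$—is already encapsulated in Lemmas~\ref{lemma:superfancylemma}–\ref{lemma:energynorms_new}. The only points demanding care are the bookkeeping of the $\alpha$ vs.\ $\alpha_{\matrixP}$ vs.\ $\weight_k$ mismatch, which is precisely what forces the factor $1+\alpha_{\matrixP}^{-1}+\norm{\weight_k-\alpha_{\matrixP}}{L^\infty(\Omega)}/\alpha_{\matrixP}$ to appear, and treating the skew-symmetric cross-product term as a quantity dominated by the $\LL^2$-norms of its arguments rather than by their energy norms.
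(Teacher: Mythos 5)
Your proposal is correct and follows essentially the same route as the paper: represent $\vectorX\cdot\matrixA_{\matrixQ}[\mmu_h]\vectorY$ via $\widetilde{\mathbb{P}}_h[\mmu_h]$, use skew-symmetry of the cross-product term and $\weight_k\geq\alpha/2$ for ellipticity in $|\!|\!|\cdot|\!|\!|_{\mmu_h}$, use $\norm{\vv_h}{\LL^2(\Omega)}\leq\alpha_{\matrixP}^{-1/2}|\!|\!|\vectorX|\!|\!|_{\mmu_h}$ for continuity, and then transfer to $|\!|\!|\cdot|\!|\!|_{\nnu_h}$ by Lemma~\ref{lemma:energynorms_brachial} (resp.\ Lemma~\ref{lemma:energynorms_new} for~(ii)). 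The only cosmetic difference is in the continuity step, where you bound the mass, stiffness, and cross-product contributions term-by-term while the paper first isolates the energy scalar product $\energydual{\vectorX}{\vectorY}{\mmu_h}$ and a remainder $\dual{(\weight_k-\alpha_{\matrixP})\,\cdot}{\cdot}{\Omega}$; both yield the same factor $1+\alpha_{\matrixP}^{-1}+\norm{\weight_k-\alpha_{\matrixP}}{L^\infty(\Omega)}/\alpha_{\matrixP}$ up to the generic constant $C$.
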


\begin{proof}
First, we prove~{\rm (i)}. Let $\vectorX, \vectorY \in \R^{2N}$. Recall $\matrixA_{\matrixQ}[\mmu_h]$ from~\eqref{eq:preconditionedsystem} as well as $\matrixA_k[\mmu_h]$, $\matrixM_k[\mmu_h]$, $\matrixL$, $\matrixS[\mmu_h]$ from Section~\ref{section:linalg}. Since $\matrixS[\mmu_h]$ is skew-symmetric, it holds that
\begin{align*}
& \vectorX \cdot \matrixA_{\matrixQ}[\mmu_h] \vectorX 
\stackrel{\eqref{eq:preconditionedsystem}}{=}
\alpha \, \matrixQ[\mmu_h]\vectorX \cdot \matrixM_{k}[\mm_h^n] \matrixQ[\mmu_h]\vectorX 
+ \beta(k) k \, \matrixQ[\mmu_h]\vectorX \cdot \matrixL \matrixQ[\mmu_h]\vectorX \\
& \stackrel{\eqref{eq:defPPhmh1}}{=} 
\dual{\weight_k(\lambda_h^n)\, \widetilde{\mathbb{P}}_h[\mmu_h]\vectorX}{\widetilde{\mathbb{P}}_h[\mmu_h]\vectorX}{\Omega}
+ \beta(k) k \,
\dual{\nabla \widetilde{\mathbb{P}}_h[\mmu_h]\vectorX}{\nabla \widetilde{\mathbb{P}}_h[\mmu_h]\vectorX}{\Omega} \\
& \,\, \stackrel{\eqref{eq:stabilizations}}{\geq}
\frac{\alpha}{2} \,
\dual{\widetilde{\mathbb{P}}_h[\mmu_h]\vectorX}{\widetilde{\mathbb{P}}_h[\mmu_h]\vectorX}{\Omega}
+ \beta(k) k \,
\dual{\nabla \widetilde{\mathbb{P}}_h[\mmu_h]\vectorX}{\nabla \widetilde{\mathbb{P}}_h[\mmu_h]\vectorX}{\Omega} 
\stackrel{\eqref{eq:energynormsinprove}}{\geq} \frac{\alpha}{2 \alpha_{\matrixP}} \, |\!|\!|\vectorX|\!|\!|_{\mmu_h}^2.
\end{align*}
With the norm equivalence result from Lemma~\ref{lemma:energynorms_brachial}, we replace $|\!|\!|\vectorX|\!|\!|_{\mmu_h}$ with $|\!|\!|\vectorX|\!|\!|_{\nnu_h}$ and prove~\eqref{eq:ellipticity}. Similarly, we obtain that
\begin{align*}
\vectorX \cdot \matrixA_{\matrixQ}[\mmu_h] \vectorY 
& \stackrel{\eqref{eq:energynormsinprove}}{=}  
\energydual{\vectorX}{\vectorY}{\mmu_h}
+ \dual{(\weight_k(\lambda_h^n) - \alpha_{\matrixP})\, \widetilde{\mathbb{P}}_h[\mmu_h]\vectorX}{\widetilde{\mathbb{P}}_h[\mmu_h]\vectorY}{\Omega} \\
& \qquad + \dual{\mmu_h \times \widetilde{\mathbb{P}}_h[\mmu_h]\vectorX}{\widetilde{\mathbb{P}}_h[\mmu_h]\vectorY}{\Omega} 
\\
& \stackrel{\phantom{\eqref{eq:energynormsinprove}}}{\leq} 
|\!|\!|\vectorX|\!|\!|_{\mmu_h} \, |\!|\!|\vectorY|\!|\!|_{\mmu_h} 
+ \big( \, 1 + \norm{\weight_k - \alpha_{\matrixP}}{L^{\infty}(\Omega)} \, \big) 
\, \norm{\widetilde{\mathbb{P}}_h[\mmu_h]\vectorX}{\LL^2(\Omega)} 
\, \norm{\widetilde{\mathbb{P}}_h[\mmu_h]\vectorY}{\LL^2(\Omega)} \\
& \stackrel{\eqref{eq:energynormsinprove}}{\leq}  
\, \bigg( \, 1 + \frac{1}{\alpha_{\matrixP}} + 
\frac{\norm{\weight_k - \alpha_{\matrixP}}{L^{\infty}(\Omega)}}{\alpha_{\matrixP}} \, \bigg) \,
|\!|\!|\vectorX|\!|\!|_{\mmu_h} \, |\!|\!|\vectorY|\!|\!|_{\mmu_h}.
\end{align*}
Again, with the norm equivalence result from Lemma~\ref{lemma:energynorms_brachial}, we prove~\eqref{eq:continuity}. This concludes the proof of~{\rm (i)}. The proof of~{\rm (ii)} follows the same lines but employs Lemma~\ref{lemma:energynorms_new} instead Lemma~\ref{lemma:energynorms_brachial}. Altogether, this concludes the proof.
\end{proof}

\subsection{Proof of Theorem~\ref{theorem:howtosolve}}\label{subsection:proof_howtosolve}
Since $\matrixA_{k}[\mm_h^n]$ is positive definite and $\matrixQ[\mm_h^n]$ has orthonormal co\-lumns, the system matrix in~\eqref{eq:linearsystem:constrainedQ} is also positive definite. Let $\vectorX \in \R^{2N}$ be the unique solution of~\eqref{eq:linearsystem:constrainedQ}. Then, it holds that
\begin{align}\label{eq:baseeq}
\matrixQ[\mm_h^n]^T \matrixA_{k}[\mm_h^n] \matrixQ[\mm_h^n] \vectorX \cdot \vectorY
 \stackrel{\eqref{eq:linearsystem:constrainedQ}}{=}
\matrixQ[\mm_h^n]^T\vectorB[\mm_h^n]\cdot \vectorY \quad \textrm{for all } \vectorY \in \R^{2N}.
\end{align}
We denote the bilinear form on the left-hand side of~\eqref{eq:tps2_variational} by $A_h(\cdot,\cdot)$ and the linear functional on the right-hand side of~\eqref{eq:tps2_variational} by $R(\cdot)$. The definition of $\matrixA_{k}[\mm_h^n]$ in Section~\ref{section:linalg} then yields that
\begin{align*}
& A_h(\widetilde{\mathbb{P}}_h[\mm_h^n] \vectorX , \widetilde{\mathbb{P}}_h[\mm_h^n] \vectorY ) 
= 
 \matrixA_{k}[\mm_h^n] \matrixQ[\mm_h^n] \vectorX \cdot \matrixQ[\mm_h^n] \vectorY =
\matrixQ[\mm_h^n]^T \matrixA_{k}[\mm_h^n] \matrixQ[\mm_h^n] \vectorX \cdot \vectorY \\
& \quad \stackrel{\eqref{eq:baseeq}}{=}
\matrixQ[\mm_h^n]^T\vectorB[\mm_h^n]\cdot \vectorY 
 = \vectorB[\mm_h^n]\cdot \matrixQ[\mm_h^n]\vectorY
= R(\widetilde{\mathbb{P}}_h[\mm_h^n] \vectorY) \quad \textrm{for all } \vectorY \in \R^{2N}.
\end{align*}
With Lemma~\ref{lemma:representations}~{(v)}, $\widetilde{\mathbb{P}}_h[\mm_h^n]$ is an isomorphism from $\R^{2N}$ to $\tps{\mm_h^n}$. Consequently, the function $\widetilde{\mathbb{P}}_h[\mm_h^n] \vectorX \in \tps{\mm_h^n}$ is a solution to~\eqref{eq:tps2_variational}. The representation formula~\eqref{eq:variationalform:vh} follows from~\eqref{eq:defPPhmh2}. This concludes the proof. \qed

\subsection{Proof of Theorem~\ref{theorem:theoretical_convergence}}\label{proof:theoretical_convergence}
First, we prove~{\rm (i)}. For a non-symmetric but positive definite system matrix, the fields-of-value analysis for the preconditioned GMRES algorithm (see, e.g.,~\cite[Theorem~3.2]{Starke97}) yields that
\begin{subequations}\label{eq:starke_estimate}
\begin{align}
|\!|\!|\vectorR^{(\ell)}|\!|\!|_{\mmu_h} \, 
\leq \, 
\Big( \, 1 - \gamma^{(1)} \,  \gamma^{(2)} \, \Big)^{\ell/2} \,
|\!|\!|\vectorR^{(0)}|\!|\!|_{\mmu_h},
\end{align} 
where
\begin{align}
\gamma^{(1)} & := 
\inf_{\vectorX \in \R^{2N} \setminus \{\0 \}} 
\frac{\vectorX \cdot \matrixA_{\matrixQ}[\mm_h^n] \vectorX}{\vectorX \cdot \big( \, \matrixP_{\matrixQ}[\mmu_h] \, \big)^{-1} \vectorX} > 0,
\label{eq:starke_estimate_1} \\
\gamma^{(2)} & :=
\inf_{\vectorX \in \R^{2N} \setminus \{\0 \}} 
\frac{\vectorX \cdot \big( \, \matrixA_{\matrixQ}[\mm_h^n] \, \big)^{-1} \vectorX}{\vectorX \cdot \matrixP_{\matrixQ}[\mmu_h] \vectorX} >0.
\label{eq:starke_estimate_2}
\end{align}
\end{subequations}
To estimate $\gamma^{(1)}$ and $\gamma^{(2)}$ from below, recall $\widetilde{\kappa}(\mm_h^n,\mmu_h,h^{-2})$ from~\eqref{eq:defkappa_tilde} and exploit Lemma~\ref{lemma:spectral}~{\rm (i)}. This yields that
\begin{align*}
\gamma^{(1)} \, \stackrel{\eqref{eq:defkappa_tilde_overall}}{\gtrsim} \,
\frac{\alpha}{\alpha_{\matrixP}} \widetilde{\kappa}(\mm_h^n,\mmu_h,h^{-2})^{-2} \stackrel{\eqref{eq:factor:theoretical_1}}{=} \frac{\alpha}{\alpha_{\matrixP}} \, \mathbb{F}^{-1}.
\end{align*}
With Lemma~\ref{lemma:spectral}~{\rm (i)}, the matrices $\matrixB := \matrixA_{\matrixQ}[\mm_h^n]$ and $\matrixB_0 :=\big(\matrixP_{\matrixQ}[\mmu_h]\big)^{-1}$ satisfy the setting of Lemma~\ref{lemma:abstract_matrix_setting} with
\begin{align*}
c_1 \, \simeq \,
\frac{\alpha}{\alpha_{\matrixP}}
\widetilde{\kappa}(\mm_h^n,\mmu_h,h^{-2})^{-2},
\quad \textrm{and} \quad c_2 \, \simeq \, \bigg( \, 1 
+ \frac{1}{\alpha_{\matrixP}} 
+ \frac{ \norm{\weight_k - \alpha_{\matrixP}}{L^{\infty}(\Omega)} }{ \alpha_{\matrixP} } \, \bigg) \, 
\widetilde{\kappa}(\mm_h^n,\mmu_h,h^{-2})^{2}.
\end{align*}
Hence, Lemma~\ref{lemma:abstract_matrix_setting} yields that
\begin{align}\label{eq:estimate_gamma2}
\begin{split}
\gamma^{(2)} 
&  \stackrel{\eqref{eq:starke_estimate_2}}{\gtrsim}
\frac{\alpha}{\alpha_{\matrixP}}
\bigg( \, 1 
+ \frac{1}{\alpha_{\matrixP}} 
+ \frac{ \norm{\weight_k - \alpha_{\matrixP}}{L^{\infty}(\Omega)} }{ \alpha_{\matrixP} } \, \bigg)^{-2} \,
\widetilde{\kappa}(\mm_h^n,\mmu_h,h^{-2})^{-6} \\
& \stackrel{\eqref{eq:factor:theoretical_1}}{=} 
\frac{\alpha}{\alpha_{\matrixP}}
\bigg( \, 1 
+ \frac{1}{\alpha_{\matrixP}} 
+ \frac{ \norm{\weight_k - \alpha_{\matrixP}}{L^{\infty}(\Omega)} }{ \alpha_{\matrixP} } \, \bigg)^{-2}
\mathbb{F}^{-3}.
\end{split}
\end{align}
With~\eqref{eq:starke_estimate}, we conclude the proof of~{\rm (i)}. The proof of~{\rm (ii)} then follows the same lines but exploits Lemma~\ref{lemma:spectral}~{\rm (ii)} instead of Lemma~\ref{lemma:spectral}~{\rm (i)}. In the latter arguments, this replaces $\widetilde{\kappa}(\mm_h^n,\mmu_h,h^{-2})$ by $\kappa(\mm_h^n,\mmu_h)$. Altogether, this concludes the proof.
\qed

\subsection{Proof of Theorem~\ref{theorem:practical_convergence}}\label{proof:practical_convergence}
In a\-na\-lo\-gy to~\eqref{eq:starke_estimate}, the fields-of-value analysis for the preconditioned GMRES algorithm (see, e.g.,~\cite[Theorem~3.2]{Starke97}) yields that
\begin{subequations}\label{eq:starke_estimate_practical}
\begin{align}
|\!|\!|\widetilde{\vectorR}^{(\ell)}|\!|\!|_{\mm_h^n}^{\prime} \, 
\leq \, 
\Big( \, 1 - \widetilde{\gamma}^{(1)} \,  \widetilde{\gamma}^{(2)} \, \Big)^{\ell/2} \,
|\!|\!|\widetilde{\vectorR}^{(0)}|\!|\!|_{\mm_h^n}^{\prime},
\end{align} 
where
\begin{align}
\widetilde{\gamma}^{(1)} & := 
\inf_{\vectorX \in \R^{2N} \setminus \{\0 \}} 
\frac{\vectorX \cdot \matrixA_{\matrixQ}[\mm_h^n] \vectorX}{\vectorX \cdot 
\big( \, \widetilde{\matrixP}_{\matrixQ}[\mm_h^n] \, \big)^{-1} \vectorX} >0,
\label{eq:starke_estimate_practical_1} \\
\widetilde{\gamma}^{(2)} & :=
\inf_{\vectorX \in \R^{2N} \setminus \{\0 \}} 
\frac{\vectorX \cdot \big( \, \matrixA_{\matrixQ}[\mm_h^n] \, \big)^{-1} \vectorX}{\vectorX \cdot \widetilde{\matrixP}_{\matrixQ}[\mm_h^n] \vectorX} >0.
\label{eq:starke_estimate_practical_2}
\end{align}
\end{subequations}
Recall from~\eqref{eq:forma_constrained_preconditioner}, the definition of the theoretical preconditioner $\matrixP[\mm_h^n]$ and from~\eqref{eq:starke_estimate} the corresponding definition of $\gamma^{(1)}$ and $\gamma^{(2)}$. We obtain that
\begin{subequations}\label{eq:deltas}
\begin{align}\label{eq:delta_1}
\widetilde{\gamma}^{(1)} 
& \stackrel{\eqref{eq:starke_estimate_practical_1}}{\geq} 
\inf_{\vectorX \in \R^{2N} \setminus \{\0 \}} 
\frac{\vectorX \cdot \matrixA_{\matrixQ}[\mm_h^n] \vectorX}{\vectorX \cdot \big( \, \matrixP_{\matrixQ}[\mm_h^n] \, \big)^{-1} \vectorX} 
\, 
\inf_{\vectorX \in \R^{2N} \setminus \{\0 \}} 
\frac{\vectorX \cdot \big( \, \matrixP_{\matrixQ}[\mm_h^n] \, \big)^{-1} \vectorX}{\vectorX \cdot \big( \, \widetilde{\matrixP}_{\matrixQ}[\mm_h^n] \, \big)^{-1} \vectorX} \notag \\
& \stackrel{\eqref{eq:starke_estimate_1}}{=} 
\gamma^{(1)} 
\inf_{\vectorX \in \R^{2N} \setminus \{\0 \}} 
\frac{\vectorX \cdot \big( \, \matrixP_{\matrixQ}[\mm_h^n] \, \big)^{-1} \vectorX}{\vectorX \cdot \big( \, \widetilde{\matrixP}_{\matrixQ}[\mm_h^n] \, \big)^{-1} \vectorX} 
=: 
\gamma^{(1)} \, \delta^{(1)},
\end{align}
as well as
\begin{align}\label{eq:delta_2}
\widetilde{\gamma}^{(2)} 
& \stackrel{\eqref{eq:starke_estimate_practical_1}}{\geq} 
\inf_{\vectorX \in \R^{2N} \setminus \{\0 \}} 
\frac{\vectorX \cdot \big( \, \matrixA_{\matrixQ}[\mm_h^n] \, \big)^{-1} \vectorX}{\vectorX \cdot \, \matrixP_{\matrixQ}[\mm_h^n] \, \vectorX} 
\, 
\inf_{\vectorX \in \R^{2N} \setminus \{\0 \}} 
\frac{\vectorX \cdot  \matrixP_{\matrixQ}[\mm_h^n]  \vectorX}{\vectorX \cdot \widetilde{\matrixP}_{\matrixQ}[\mm_h^n] \vectorX} \notag \\
& \stackrel{\eqref{eq:starke_estimate_1}}{=} 
\gamma^{(2)} 
\inf_{\vectorX \in \R^{2N} \setminus \{\0 \}} 
\frac{\vectorX \cdot \matrixP_{\matrixQ}[\mm_h^n] \, \vectorX}{\vectorX \cdot \widetilde{\matrixP}_{\matrixQ}[\mm_h^n] \vectorX} 
=: 
\gamma^{(2)} \, \delta^{(2)}. 
\end{align}
\end{subequations}
Here, we implicitly have $\mm_h^n = \mmu_h$ in the definition~\eqref{eq:starke_estimate} of $\gamma^{(1)}$ and $\gamma^{(2)}$. In particular, Lemma~\ref{lemma:spectral} holds with $\widetilde{\kappa}(\mm_h^n,\mm_h^n,h^{-2}) = \kappa(\mm_h^n,\mm_h^n) = 1$. Following the lines of the proof of Theorem~\ref{theorem:theoretical_convergence}, this yields that
\begin{align}\label{eq:gamma_estimates_practical}
\gamma^{(1)} \, \gtrsim \, \frac{\alpha}{\alpha_{\matrixP}}, \quad \textrm{and} \quad 
\gamma^{(2)} 
\, \stackrel{\eqref{eq:estimate_gamma2}}{\gtrsim} \,
\frac{\alpha}{\alpha_{\matrixP}}
\bigg( \, 1 
+ \frac{1}{\alpha_{\matrixP}} 
+ \frac{ \norm{\weight_k - \alpha_{\matrixP}}{L^{\infty}(\Omega)} }{ \alpha_{\matrixP} } \, \bigg)^{-2}.
\end{align}
Hence, in the following four steps, it remains to estimate $\delta^{(1)}$ and $\delta^{(2)}$ from below.

{\bf Step 1.} We will use the fictitious space lemma (see~\cite{Nep91,Go95}) to derive
\begin{align}\label{eq:fictitious_estimate}
& c_1 \, \vectorX \cdot \matrixP_{\matrixQ} [\mm_h^n] \vectorX 
\, \leq \,
\vectorX \cdot \widetilde{\matrixP}_{\matrixQ} [\mm_h^n] \vectorX 
\, \leq \,
c_2 \, \vectorX \cdot \matrixP_{\matrixQ} [\mm_h^n] \vectorX 
\quad \textrm{for all } \vectorX\in \R^{2N}.
\end{align}
Here, the constants $c_1,c_2 > 0$ stem from the following two assumptions~\ref{item:assumptionA}--\ref{item:assumptionB} of the fictitious space lemma: 
\begin{subequations}\label{eq:propreducedpractical}
\begin{enumerate}[label=\rm{{\bf (FS\arabic*)}}]
\item\label{item:assumptionA} For all $\vectorX \in \R^{2N}$, there exists $\vectorY \in \R^{3N}$ with $\matrixQ[\mm_h^n]^T \vectorY = \vectorX$ and
\begin{align}\label{eq:propreducedpractical3}
c_1 \, \vectorY \cdot (\alpha_{\matrixP} \matrixM + \beta(k) k \matrixL) \vectorY \leq 
\vectorX \cdot \big( \, \matrixP_{\matrixQ}[\mm_h^n] \, \big)^{-1} \vectorX .
\end{align}
\item\label{item:assumptionB} For all $\vectorY \in \R^{3N}$, it holds that
\begin{align}\label{eq:propreducedpractical4}
\matrixQ[\mm_h^n]^T \vectorY \cdot  \big( \, \matrixP_{\matrixQ}[\mm_h^n] \, \big)^{-1} \matrixQ[\mm_h^n]^T \vectorY 
\, \leq \,
c_2 \, \vectorY \cdot (\alpha_{\matrixP} \matrixM + \beta(k) k \matrixL) \vectorY.
\end{align}
\end{enumerate}
\end{subequations}
With the assumptions~\ref{item:assumptionA}--\ref{item:assumptionB}, the fictitious space lemma then implies that
\begin{align*}
c_1 \, \widetilde{\vectorX} \cdot \big( \, \matrixP_{\matrixQ}[\mm_h^n] \, \big)^{-1} \widetilde{\vectorX} 
& \stackrel{\phantom{\eqref{eq:pracital_constrained_preconditioner}}}{\leq} 
\big( \, \matrixP_{\matrixQ}[\mm_h^n] \, \big)^{-1} 
\widetilde{\vectorX} \cdot \matrixQ[\mm_h^n]^T (\alpha_{\matrixP} \matrixM + \beta(k) k \matrixL)^{-1} \matrixQ[\mm_h^n] 
\big( \, \matrixP_{\matrixQ}[\mm_h^n] \, \big)^{-1} \widetilde{\vectorX} \\
& \stackrel{\eqref{eq:pracital_constrained_preconditioner}}{=}
\big( \, \matrixP_{\matrixQ}[\mm_h^n] \, \big)^{-1} \widetilde{\vectorX} \cdot \widetilde{\matrixP}_{\matrixQ}[\mm_h^n] 
\big( \, \matrixP_{\matrixQ}[\mm_h^n] \, \big)^{-1} \widetilde{\vectorX} \\
& \stackrel{\phantom{\eqref{eq:pracital_constrained_preconditioner}}}{\leq}
c_2 \, \widetilde{\vectorX} \cdot \big( \, \matrixP_{\matrixQ}[\mm_h^n] \, \big)^{-1} \widetilde{\vectorX} 
\quad \textrm{for all } \widetilde{\vectorX} \in \R^{2N}.
\end{align*}
Then, with $\widetilde{\vectorX} := \matrixP_{\matrixQ}[\mm_h^n] \vectorX \in \R^{2N}$ in the latter estimate, this verifies~\eqref{eq:fictitious_estimate}.

{\bf Step 2.} We verify assumption~\ref{item:assumptionA} of the fictitious space lemma. To that end, let $\vectorX \in \R^{2N}$ and set $\vectorY:=\matrixQ[\mm_h^n]\vectorX \in \R^{3N}$. Then, $\matrixQ[\mm_h^n]^T\vectorY=\matrixQ[\mm_h^n]^T\matrixQ[\mm_h^n]\vectorX  = \vectorX $
and 
\begin{align*}
& \vectorY \cdot (\alpha_{\matrixP} \matrixM + \beta(k) k \matrixL) \vectorY
\stackrel{\phantom{\eqref{eq:forma_constrained_preconditioner}}}{=} 
\matrixQ[\mm_h^n] \vectorX \cdot (\alpha_{\matrixP} \matrixM + \beta(k) k \matrixL) \matrixQ[\mm_h^n] \vectorX \\
& \stackrel{\phantom{\eqref{eq:forma_constrained_preconditioner}}}{=} 
\vectorX \cdot \matrixQ[\mm_h^n]^T (\alpha_{\matrixP} \matrixM + \beta(k) k \matrixL) \matrixQ[\mm_h^n] \vectorX 
\stackrel{\eqref{eq:forma_constrained_preconditioner}}{=} 
\vectorX \cdot \big( \, \matrixP_{\matrixQ}[\mm_h^n] \, \big)^{-1} \vectorX,
\end{align*}
i.e., assumption~\ref{item:assumptionA} holds with $c_1=1$. 

{\bf Step 3.} We verify assumption~\ref{item:assumptionB} of the fictitious space lemma. To that end, let $\vectorY \in \R^{3N}$. Define $\vv_h := \sum_{i=1}^{3N} \vectorY_i \pphi_i \in \courantspace_h$. With Lemma~\ref{lemma:representations}~{\rm (ii)}, we obtain that
\begin{eqnarray}\label{eq:C2_finalize}
& & \hspace*{-1.5cm} 
\matrixQ[\mm_h^n]^T \vectorY \cdot  \big( \, \matrixP[\mm_h^n] \, \big)^{-1} \matrixQ[\mm_h^n]^T \vectorY \notag \\
& \stackrel{\eqref{eq:forma_constrained_preconditioner}}{=} &
\vectorY \cdot 
\matrixQ[\mm_h^n] \matrixQ[\mm_h^n]^T 
(\alpha_{\matrixP} \matrixM + \beta(k) k \matrixL) 
\matrixQ[\mm_h^n] \matrixQ[\mm_h^n]^T  
\vectorY \notag \\
& \stackrel{\eqref{eq:deforthtangentspace2}}{=} &
\alpha_{\matrixP} \norm{ \widetilde{\Ppi}_{h}[\mm_h^n] \vectorY}{\LL^2(\Omega)}^2
+ 
\beta(k) k \, \norm{\nabla \widetilde{\Ppi}_{h}[\mm_h^n] \vectorY }{\LL^2(\Omega)}^2 \notag \\
& = &
\alpha_{\matrixP} \norm{ \Ppi_{h}[\mm_h^n] \vv_h}{\LL^2(\Omega)}^2
+ 
\beta(k) k \, \norm{\nabla \Ppi_{h}[\mm_h^n] \vv_h }{\LL^2(\Omega)}^2.
\end{eqnarray}
For the verification of~\ref{item:assumptionB} in~{\rm (i)}, Lemma~\ref{lemma:l2norms}~{\rm (iii)} and an inverse estimate yield that
\begin{subequations}\label{eq:thisistheend}
\begin{align}
& \alpha_{\matrixP} \norm{ \Ppi_{h}[\mm_h^n] \vv_h}{\LL^2(\Omega)}^2
+ 
\beta(k) k \, \norm{\nabla \Ppi_{h}[\mm_h^n] \vv_h }{\LL^2(\Omega)}^2 \notag \\
& \lesssim 
\Big( \, \alpha_{\matrixP}  + \frac{\beta(k)k}{h^2} \, \Big) \norm{\vv_h}{\LL^2(\Omega)}^2
\lesssim 
 \, \Big( \, 1  + \frac{\beta(k)k}{\alpha_{\matrixP} h^2} \, \Big) \bigg[ \alpha_{\matrixP} \norm{ \vv_h}{\LL^2(\Omega)}^2
+ 
\beta(k) k \, \norm{\nabla \vv_h }{\LL^2(\Omega)}^2 \bigg] \notag \\
& = \, \Big( \, 1  + \frac{\beta(k)k}{\alpha_{\matrixP} h^2} \, \Big)  
\, \vectorY \cdot \big( \, \alpha_{\matrixP} \matrixM + \beta(k) k \matrixL \, \big) \vectorY.
\end{align}
For the verification of~\ref{item:assumptionB} in~{\rm (ii)}, we use the stronger assumption $1 + (\matrixT_n\mm_h^n)_3 \geq \gamma > 0$. Then, the definition of $\vv_h$, Lemma~\ref{lemma:l2norms}~{\rm (iii)} and Lemma~\ref{lemma:superfancylemma}~{\rm (iii)} yield that
\begin{align}
& \alpha_{\matrixP} \norm{ \Ppi_{h}[\mm_h^n] \vv_h}{\LL^2(\Omega)}^2
+ 
\beta(k) k \, \norm{\nabla \Ppi_{h}[\mm_h^n] \vv_h }{\LL^2(\Omega)}^2 \notag \\
& \qquad \lesssim 
\Big( \, \alpha_{\matrixP} + \frac{\beta(k)k}{\gamma^4} \norm{\nabla \mm_h^n}{\LL^\infty(\Omega)}^2 \, \Big) 
\,  \norm{\vv_h}{\LL^2(\Omega)}^2 + \beta(k) k \norm{\nabla \vv_h}{\LL^2(\Omega)}^2  \notag \\
& \qquad \leq 
\Big( \, 1 + \frac{\beta(k)k}{\alpha_{\matrixP} \gamma^4} \norm{\nabla \mm_h^n}{\LL^\infty(\Omega)}^2 \, \Big) 
\, \bigg[ \, \alpha_{\matrixP} \norm{ \vv_h}{\LL^2(\Omega)}^2
+ 
\beta(k) k \, \norm{\nabla \vv_h }{\LL^2(\Omega)}^2 \, \bigg] \notag \\
& \qquad = \Big( \, 1 + \frac{\beta(k)k}{\alpha_{\matrixP} \gamma^4} \norm{\nabla \mm_h^n}{\LL^\infty(\Omega)}^2 \, \Big)
\, \vectorY \cdot \big( \, \alpha_{\matrixP} \matrixM + \beta(k) k \matrixL \, \big) \vectorY.
\end{align}
\end{subequations}
We combine~\eqref{eq:C2_finalize}--\eqref{eq:thisistheend} and obtain that~\ref{item:assumptionB} holds with
\begin{align}\label{eq:whatc1}
c_2 \, \lesssim \,
\begin{cases}
\displaystyle{ 1 + \frac{\beta(k)k}{\alpha_{\matrixP} \gamma^4} \norm{\nabla \mm_h^n}{\LL^\infty(\Omega)}^2 }
& \textrm{if } 1 + (\matrixT_n\mm_h^n)_3 \geq \gamma > 0, \\
\displaystyle{ 1  + \frac{\beta(k)k}{\alpha_{\matrixP} h^2} }
& \textrm{else.}
\end{cases}
\end{align}

{\bf Step 4.} With {\bf Step 1}--{\bf Step 3}, the matrices $\matrixB := \widetilde{\matrixP}_{\matrixQ}[\mm_h^n]$ and $\matrixB_0 := \matrixP_{\matrixQ}[\mm_h^n]$ satisfy the assumptions of Lemma~\ref{lemma:abstract_matrix_setting} with $c_1 = 1$ and $c_2$ from~\eqref{eq:whatc1}. Hence, we get that
\begin{align}\label{eq:fictitious_estimate_2} \!\!\!
\frac{1}{c_2^2} \, \vectorX \cdot \big( \, \matrixP_{\matrixQ}[\mm_h^n] \, \big)^{-1} \vectorX 
\, \leq \,
\vectorX \cdot \big( \, \widetilde{\matrixP}_{\matrixQ}[\mm_h^n] \, \big)^{-1} \vectorX 
\, \leq \, \vectorX \cdot \big( \, \matrixP_{\matrixQ}[\mm_h^n] \, \big)^{-1} \vectorX 
\quad \textrm{for all } \vectorX\in \R^{2N}.
\end{align}
From this and~\eqref{eq:fictitious_estimate}, we obtain that
\begin{align*}
\delta^{(1)} \!\stackrel{\eqref{eq:delta_1}}{=}\!
\inf_{\vectorX \in \R^{2N} \setminus \{\0 \}} 
\frac{\vectorX \cdot \big( \, \matrixP_{\matrixQ}[\mm_h^n] \, \big)^{-1} \vectorX}{\vectorX \cdot \big( \, \widetilde{\matrixP}_{\matrixQ}[\mm_h^n] \, \big)^{-1} \vectorX} 
\, \stackrel{\eqref{eq:fictitious_estimate_2}}{\geq} \, 1
\, \text{ and } \,
\delta^{(2)} \!\stackrel{\eqref{eq:delta_2}}{=}\! \inf_{\vectorX \in \R^{2N} \setminus \{\0 \}} 
\frac{\vectorX \cdot \matrixP_{\matrixQ}[\mm_h^n] \, \vectorX}{\vectorX \cdot \widetilde{\matrixP}_{\matrixQ}[\mm_h^n] \vectorX} 
\, \stackrel{\eqref{eq:fictitious_estimate}}{\geq} \, \frac{1}{c_2}.
\end{align*}
Together with the estimates for $\gamma^{(1)}$ and $\gamma^{(2)}$ from~\eqref{eq:gamma_estimates_practical}, this concludes the proof.
\qed


\numberwithin{statement}{section}
\appendix

\section{Auxiliary results}

\begin{lemma}\label{lemma:auxiliary1}
For $\mmu_h = (\mu_1,\mu_2,\mu_3)^T \in \magnetizationset_h$, there hold the following assertions {\rm (i)}--{\rm (iii)}:

{\rm (i)} For all $i,j \in \{ 1,2 \}$, it holds that
\begin{align*}
\bigg\Vert \frac{\mu_i \mu_j}{1+\mu_3} \bigg\Vert_{\LL^{\infty}(\Omega)} \leq 2.
\end{align*} 

{\rm (ii)} Let $1 + \mu_3(z) \geq \gamma > 0$ for all nodes $z \in \NN_h$. For all $i,j \in \{ 1,2 \}$, it holds that
\begin{align*}
\frac{\mu_i \mu_j}{1+\mu_3} \in W^{1,\infty}(\Omega) 
\quad \textrm{with} \quad
\bigg\Vert \, \partial_k \bigg[ \, \frac{\mu_i \mu_j}{1+\mu_3} \, \bigg] \bigg\Vert_{L^{\infty}(\Omega)} \leq 
3 \,
{\gamma}^{-1} 
\, 
\norm{ \partial_k \mmu_h }{\LL^{\infty}(\Omega)}
\end{align*}
for all $k \in \{1,2,3\}$.

{\rm (iii)} Let $1 + \mu_3(z) \geq \gamma > 0$ for all nodes $z \in \NN_h$. For all $i,j \in \{ 1,2 \}$, it holds that
\begin{align*}
\frac{\mu_i \mu_j}{1+\mu_3} \in W^{2,\infty}(K) 
\quad \textrm{with} \quad
\bigg\Vert \, \partial_{\ell} \partial_k \bigg[ \, \frac{\mu_i \mu_j}{1+\mu_3} \, \bigg] \bigg\Vert_{L^{\infty}(K)} \leq 
12 \,
{\gamma}^{-2} 
\,
\norm{ \partial_k \mmu_h }{\LL^{\infty}(\Omega)} \, \norm{ \partial_\ell \mmu_h }{\LL^{\infty}(\Omega)},
\end{align*}
for all elements $K \in \TT_h$ and for all $\ell,k \in \{1,2,3\}$.
\end{lemma}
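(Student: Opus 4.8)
The plan is to establish the single pointwise inequality underlying~(i) and then derive~(ii) and~(iii) by differentiating on a fixed element, exploiting that $\mu_1,\mu_2,\mu_3$ are affine there. The genuine difficulty is concentrated in~(i): $\mmu_h$ is normalized only at the nodes, whereas $\frac{\mu_i\mu_j}{1+\mu_3}$ is a rational (in particular non-affine) function on each $K\in\TT_h$, so one cannot simply evaluate at vertices. First I would fix $K$ with vertices $z_0,\dots,z_3$ and write an arbitrary $\xx\in K$ as a convex combination $\xx=\sum_m\lambda_m z_m$ with $\lambda_m\ge0$ and $\sum_m\lambda_m=1$, so that $\mu_\ell(\xx)=\sum_m\lambda_m\mu_\ell(z_m)$ by affinity. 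At each vertex, $|\mmu_h(z_m)|=1$ gives, for $\ell\in\{1,2\}$, the inequality $\mu_\ell(z_m)^2\le1-\mu_3(z_m)^2=(1-\mu_3(z_m))(1+\mu_3(z_m))\le2\,(1+\mu_3(z_m))$, together with $1+\mu_3(z_m)\ge0$. A Cauchy--Schwarz estimate, $|\sum_m\lambda_m\mu_\ell(z_m)|\le\sum_m\sqrt{\lambda_m}\,(\sqrt{\lambda_m}\,|\mu_\ell(z_m)|)\le(\sum_m\lambda_m)^{1/2}(\sum_m\lambda_m\mu_\ell(z_m)^2)^{1/2}$, then yields the pointwise bound $|\mu_\ell(\xx)|\le\sqrt{2\,(1+\mu_3(\xx))}$ on $\Omega$ for $\ell\in\{1,2\}$, where $1+\mu_3(\xx)=\sum_m\lambda_m(1+\mu_3(z_m))$. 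Multiplying the bounds for $i$ and $j$ gives $|\mu_i\mu_j|\le2\,(1+\mu_3)$, which is~(i) wherever $1+\mu_3>0$; on any $K$ where $1+\mu_3\equiv0$, the same vertex inequality forces $\mu_i\equiv\mu_j\equiv0$ on $K$ (since then $|\mu_3|\le1$ attains its minimum in the interior of $K$), so the quotient is understood as $0$ there and the $\LL^\infty$ bound still holds.

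For~(ii) and~(iii), note first that under the hypothesis $1+\mu_3(z)\ge\gamma>0$ at all nodes, the affine function $\mu_3$ attains its minimum at a node, hence $1+\mu_3\ge\gamma$ on all of $\Omega$; therefore $g:=\frac{\mu_i\mu_j}{1+\mu_3}$ is globally continuous and, as a rational function with non-vanishing denominator, smooth on the interior of each $K$, so $g\in W^{1,\infty}(\Omega)$ and $g|_K\in W^{2,\infty}(K)$. On $K$ I would compute $\partial_k g=\frac{(\partial_k\mu_i)\mu_j+\mu_i\partial_k\mu_j}{1+\mu_3}-\frac{\mu_i\mu_j\,\partial_k\mu_3}{(1+\mu_3)^2}$ and, differentiating once more, obtain $\partial_\ell\partial_k g$ as a sum of finitely many terms in which all second derivatives $\partial_\ell\partial_k\mu_m$ drop out by affinity. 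Each term is then controlled by combining: the bound from~(i) in the form $\frac{|\mu_i\mu_j|}{1+\mu_3}\le2$; the elementary bounds $|\mu_m|\le1$ on $\Omega$ (affine, $|\mu_m|\le1$ at nodes) and hence $\frac{|\mu_m|}{1+\mu_3}\le\gamma^{-1}$; the constancy of $\partial_k\mu_m$ on $K$ with $|\partial_k\mu_m|\le|\partial_k\mmu_h|\le\norm{\partial_k\mmu_h}{\LL^\infty(\Omega)}$; and $1+\mu_3\ge\gamma$ in each denominator power. This gives $\norm{\partial_k g}{L^\infty(\Omega)}\lesssim\gamma^{-1}\norm{\partial_k\mmu_h}{\LL^\infty(\Omega)}$ and $\norm{\partial_\ell\partial_k g}{L^\infty(K)}\lesssim\gamma^{-2}\norm{\partial_k\mmu_h}{\LL^\infty(\Omega)}\norm{\partial_\ell\mmu_h}{\LL^\infty(\Omega)}$; bookkeeping of the (three resp.\ one type of) summands, together with $\gamma\le2$ to absorb lower powers of $\gamma^{-1}$, produces the explicit constants $3$ and $12$.

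The only non-mechanical point is Part~(i), namely transferring the nodal normalization constraint to a pointwise bound on a quantity that is not piecewise affine; once the convex-combination/Cauchy--Schwarz argument is in place, Parts~(ii) and~(iii) reduce to differentiation on a single element together with~(i) and elementary inequalities, the remaining work being routine constant-chasing.
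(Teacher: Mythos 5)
Your argument is correct, and for part~(i) it takes a genuinely different route from the paper's. The paper simply observes that $|\mmu_h|$ is convex on each element, hence attains its maximum at a node, so $|\mmu_h|\le1$ on all of $\Omega$; this immediately gives $\mu_1^2+\mu_2^2\le1-\mu_3^2=(1-\mu_3)(1+\mu_3)$ and thus $\frac{|\mu_i\mu_j|}{1+\mu_3}\le\frac{\mu_1^2+\mu_2^2}{1+\mu_3}\le1-\mu_3\le2$. You instead prove the pointwise bound $\mu_\ell(\xx)^2\le2(1+\mu_3(\xx))$ for $\ell\in\{1,2\}$ via barycentric coordinates and Cauchy--Schwarz at the vertices --- which is exactly Jensen's inequality for the convex function $\mu_\ell^2$ applied to the nodal estimate $\mu_\ell(z_m)^2\le2(1+\mu_3(z_m))$. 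Both routes arrive at the same pointwise inequality and the same final bound; yours is a bit more explicit but also more work, while the paper's is shorter once one invokes convexity of the Euclidean norm. Note that your opening concern --- ``one cannot simply evaluate at vertices'' because the quotient is not piecewise affine --- is a bit of a strawman: the paper does not try to evaluate the quotient at vertices, it only needs that $|\mmu_h|\le1$ holds everywhere, and your Cauchy--Schwarz step is really reproving the same underlying convexity fact in component form. Parts (ii) and (iii) in your proposal then follow exactly the paper's plan (product rule, second derivatives of the $\mu_m$ vanish elementwise, termwise estimation using (i), $|\mmu_h|\le1$, $1+\mu_3\ge\gamma$ on $\Omega$, and $\gamma\le2$), and your ``constant-chasing'' treatment of the explicit values $3$ and $12$ is reasonable, since they are absorbed into generic constants wherever the lemma is used.
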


\begin{proof}
To prove {\rm (i)}, note that piecewise affine functions attain their maximal length at the nodes. Since $\mmu_h \in \magnetizationset_h$, together with Young's inequality this yield that
\begin{align*}
\bigg| \frac{\mu_i \mu_j}{1+ \mu_3} \bigg| \leq 
\frac{\mu_1^2 + \mu_2^2}{1 + \mu_3} \leq 
\frac{1 - \mu_3^2}{1 + \mu_3} = 
\frac{(1 - \mu_3)(1+\mu_3)}{1 + \mu_3} = 
(1- \mu_3) \leq 2.
\end{align*}
This proves {\rm (i)}. For the proof of~{\rm (ii)} and~{\rm (iii)}, the product rule yields that
\begin{align}\label{eq:auxiliary_step}
\partial_k \bigg[ \, \frac{\mu_i \mu_j}{1+\mu_3} \, \bigg] = 
\frac{(\partial_k \mu_i) \mu_j}{1+\mu_3} 
+ \frac{\mu_i (\partial_k \mu_j)}{1+\mu_3} 
- \frac{ \mu_i \mu_j (\partial_k \mu_3)}{(1+\mu_3)^2}.
\end{align}
Moreover, we exploit that the second derivative of affine functions is zero. Elementwise, the product rule then yields that
\begin{align}\label{eq:auxiliary_step2}
\begin{split}
\partial_{\ell} \partial_k \bigg[ \, \frac{\mu_i \mu_j}{1+\mu_3} \, \bigg] 
& \stackrel{\eqref{eq:auxiliary_step}}{=}
\frac{(\partial_k \mu_i) (\partial_{\ell} \mu_j)}{1+\mu_3}
- \frac{(\partial_k \mu_i) \mu_j (\partial_{\ell} \mu_3)}{(1+\mu_3)^2} 
+ \frac{(\partial_{\ell} \mu_i) (\partial_{k} \mu_j)}{1+\mu_3}  
- \frac{\mu_i (\partial_k \mu_j) (\partial_{\ell} \mu_3)}{(1+\mu_3)^2} \\
& \qquad 
-  \frac{ (\partial_{\ell} \mu_i) \mu_j  (\partial_k \mu_3)}{(1+\mu_3)^2} 
-  \frac{ \mu_i (\partial_{\ell} \mu_j)  (\partial_k \mu_3)}{(1+\mu_3)^2} 
+  \frac{2 \mu_i \mu_j   (\partial_k \mu_3)(\partial_{\ell} \mu_3)}{(1+\mu_3)^3}.
\end{split}
\end{align}
Since $|\mmu_h| \leq 1$ and since $\mmu_h$ is piecewise affine, the assumption $1 + \mu_3(z) \geq \gamma > 0$ for all nodes implies that $1 + \mu_3 \geq \gamma > 0$ in $\Omega$. Together with $|\mmu_h| \leq 1$ and~\eqref{eq:auxiliary_step}, this proves~{\rm (ii)}. For the proof of {\rm (iii)}, additionally note that $\gamma \leq 1 + \mu_3 \leq 1 + |\mu_3| \leq 2$ yields that $1 \leq 2/\gamma$. Together with $|\mmu_h| \leq 1$ and~\eqref{eq:auxiliary_step2}, this proves~{\rm (iii)}. 
\end{proof}

\begin{lemma}\label{lemma:auxiliary2}
For $\mmu_h = (\mu_1,\mu_2,\mu_3)^T, \nnu_h = (\nu_1,\nu_2,\nu_3)^T \in \magnetizationset_h$, let $1 + \mu_3(z) \geq \gamma > 0$ and $1 + \nu_3(z) \geq \gamma > 0$ for all nodes $z \in \NN_h$. Then, there exists a constant $C>0$ such that the following assertions~{\rm (i)}--{\rm (iii)} hold true:

{\rm (i)} For all $i,j \in \{ 1,2 \}$, it holds that
\begin{align*}
\bigg\Vert \frac{\mu_i \mu_j}{1+\mu_3} - \frac{\nu_i \nu_j}{1+\nu_3} \bigg\Vert_{L^{\infty}(\Omega)} \leq C \, \gamma^{-1} \norm{ \mmu_h - \nnu_h }{\LL^{\infty}(\Omega)}.
\end{align*}

{\rm (ii)} For all $i,j \in \{1,2\}$ and all $k \in \{1,2,3\}$, it holds that
\begin{align*}
\bigg\Vert \partial_k \bigg( \frac{\mu_i \mu_j}{1+\mu_3} - \frac{\nu_i \nu_j }{1+\nu_3}\bigg) \bigg\Vert_{L^{\infty}(\Omega)} 
& \leq C
\gamma^{-2} \,
\big( \, \norm{\nabla \mmu_h}{\LL^\infty(\Omega)} + \norm{\nabla \nnu_h}{\LL^\infty(\Omega)} \, \big) 
\norm{\mmu_h - \nnu_h}{\LL^{\infty}(\Omega)} \\
& \quad + C \gamma^{-1} \,
\norm{\nabla \mmu_h - \nabla \nnu_h}{\LL^{\infty}(\Omega)} .
\end{align*} 

{\rm (iii)} For all $i,j \in \{1,2\}$, all elements $K \in \TT_h$ and all $\ell,k \in \{1,2,3\}$, it holds that
\begin{align*}
\bigg\Vert \partial_{\ell} \partial_k \bigg( \frac{\mu_i \mu_j}{1+\mu_3} - \frac{\nu_i \nu_j }{1+\nu_3}\bigg) \bigg\Vert_{L^{\infty}(K)} 
& \leq C
\gamma^{-3} \,
\big( \, \norm{\nabla \mmu_h}{\LL^\infty(\Omega)} + \norm{\nabla \nnu_h}{\LL^\infty(\Omega)} \, \big)^2 \,
\norm{\mmu_h - \nnu_h}{\LL^{\infty}(\Omega)} \\
& \quad + C \gamma^{-2} \,
\big( \, \norm{\nabla \mmu_h}{\LL^\infty(\Omega)} + \norm{\nabla \nnu_h}{\LL^\infty(\Omega)} \, \big) \,
\norm{\nabla \mmu_h - \nabla \nnu_h}{\LL^{\infty}(\Omega)}.
\end{align*}
\end{lemma}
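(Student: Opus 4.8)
The plan is to reduce all three estimates to the algebraic identities already derived in the proof of Lemma~\ref{lemma:auxiliary1}, namely \eqref{eq:auxiliary_step} and \eqref{eq:auxiliary_step2}, together with the elementary ``telescoping'' trick for differences of products and quotients. First I would record the facts that will be used repeatedly: since $\mmu_h$ and $\nnu_h$ are piecewise affine with $|\mmu_h|\le 1$ and $|\nnu_h|\le 1$, the nodal hypotheses $1+\mu_3(z)\ge\gamma$ and $1+\nu_3(z)\ge\gamma$ propagate to $1+\mu_3\ge\gamma>0$ and $1+\nu_3\ge\gamma>0$ throughout $\Omega$, and moreover $\gamma\le 1+\mu_3\le 2$, hence $1\le 2/\gamma$. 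In particular every factor $(1+\mu_3)^{-1}$ or $(1+\nu_3)^{-1}$ is bounded by $\gamma^{-1}$, while Lemma~\ref{lemma:auxiliary1}~{\rm (i)} bounds the quotients $\mu_i\mu_j/(1+\mu_3)$ and $\nu_i\nu_j/(1+\nu_3)$ by $2$; the latter bound is what keeps the powers of $\gamma^{-1}$ small.

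For~{\rm (i)} I would use the decomposition
\[
\frac{\mu_i\mu_j}{1+\mu_3}-\frac{\nu_i\nu_j}{1+\nu_3}
=\frac{(\mu_i-\nu_i)\mu_j}{1+\mu_3}+\frac{\nu_i(\mu_j-\nu_j)}{1+\mu_3}
+\frac{\nu_i\nu_j}{1+\nu_3}\cdot\frac{\nu_3-\mu_3}{1+\mu_3},
\]
where the first two terms are bounded by $\gamma^{-1}\norm{\mmu_h-\nnu_h}{\LL^\infty(\Omega)}$ using $|\mu_j|,|\nu_i|\le1$, and the last term by $2\gamma^{-1}\norm{\mmu_h-\nnu_h}{\LL^\infty(\Omega)}$ using Lemma~\ref{lemma:auxiliary1}~{\rm (i)}.

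For~{\rm (ii)} and~{\rm (iii)} I would start from the explicit expressions \eqref{eq:auxiliary_step} for $\partial_k[\mu_i\mu_j/(1+\mu_3)]$ and \eqref{eq:auxiliary_step2} for the (elementwise) second derivative, form the difference with the corresponding $\nnu_h$-expression term by term, and telescope each monomial so that exactly one factor of the form $\mu_\bullet-\nu_\bullet$ or $\partial_k\mu_\bullet-\partial_k\nu_\bullet$ appears. The crucial point is the grouping of the higher-order-denominator contributions: a term such as $\mu_i\mu_j(\partial_k\mu_3)/(1+\mu_3)^2$ should be written as $\frac{\mu_i\mu_j}{1+\mu_3}\cdot\frac{\partial_k\mu_3}{1+\mu_3}$ before telescoping, so that the swap of the first factor is controlled by part~{\rm (i)} of the present lemma (hence by $\gamma^{-1}\norm{\mmu_h-\nnu_h}{\LL^\infty(\Omega)}$) and the swap of the second factor by $\frac{\partial_k\mu_3-\partial_k\nu_3}{1+\mu_3}+(\partial_k\nu_3)\frac{\nu_3-\mu_3}{(1+\mu_3)(1+\nu_3)}$; likewise the cubic-denominator term in \eqref{eq:auxiliary_step2} is split as $\frac{\mu_i\mu_j}{1+\mu_3}\cdot\frac{\partial_k\mu_3}{1+\mu_3}\cdot\frac{\partial_\ell\mu_3}{1+\mu_3}$ and telescoped in three factors. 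Summing the resulting terms, using $|\partial_k\mu_3|\le\norm{\nabla\mmu_h}{\LL^\infty(\Omega)}$, and absorbing lower powers of $\gamma^{-1}$ via $1\le 2/\gamma$ yields the asserted bounds with top powers $\gamma^{-2}$ in~{\rm (ii)} and $\gamma^{-3}$ in~{\rm (iii)}.

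The main obstacle is purely organizational: if one bounds the quotients $\mu_i\mu_j/(1+\mu_3)$ crudely by $|\mu_i\mu_j|/(1+\mu_3)\le\gamma^{-1}$ instead of invoking Lemma~\ref{lemma:auxiliary1}~{\rm (i)}, the telescoping of the quadratic- and cubic-denominator terms produces $\gamma^{-4}$ and worse, rather than the sharp $\gamma^{-2}$ and $\gamma^{-3}$. Getting the bookkeeping of the roughly seven summands in \eqref{eq:auxiliary_step2} right---so that each contributes only one difference factor and the claimed power of $\gamma^{-1}$---is the only delicate part; everything else is a routine application of $|\mmu_h|,|\nnu_h|\le1$, the product rule, and $1\le 2/\gamma$.
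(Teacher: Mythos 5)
Your proposal is correct and follows essentially the same route as the paper: the paper first writes the difference as the telescoped identity $\frac{\mu_i\mu_j}{1+\mu_3}-\frac{\nu_i\nu_j}{1+\nu_3}=-\frac{\mu_i\mu_j}{1+\mu_3}\frac{\mu_3-\nu_3}{1+\nu_3}+\frac{\mu_j(\mu_i-\nu_i)}{1+\nu_3}+\frac{\nu_i(\mu_j-\nu_j)}{1+\nu_3}$ and then differentiates it term by term, whereas you differentiate first (via \eqref{eq:auxiliary_step}--\eqref{eq:auxiliary_step2}) and telescope afterwards, but both rest on exactly the same ingredients: one difference factor per monomial, the bound $\lvert\mu_i\mu_j/(1+\mu_3)\rvert\le 2$ from Lemma~\ref{lemma:auxiliary1}~{\rm (i)} to avoid extra powers of $\gamma^{-1}$, the propagation of the nodal bound to $1+\mu_3,1+\nu_3\ge\gamma$ on $\Omega$, and $1\le 2/\gamma$. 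No gap; the bookkeeping you outline delivers precisely the stated powers $\gamma^{-1}$, $\gamma^{-2}$, $\gamma^{-3}$.
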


\begin{proof}
Throughout the proof, we write 
\begin{align}\label{eq:notations}
P_{ij} := \frac{\mu_i \mu_j}{1+\mu_3} 
\quad \textrm{and} \quad d_k :=  \mu_k - \nu_k \quad
\textrm{for all } i,j \in \{1,2\} \textrm{ and all } k \in \{1,2,3\}.
\end{align}
Recall that the properties of $P_{ij}$ are discussed in Lemma~\ref{lemma:auxiliary1}. Since $\mmu_h, \nnu_h$ are piecewise affine, we get that $|\mmu_h|, |\nnu_h| \leq 1$ on $\Omega$. Moreover, since $1 + \mu_3(z) \geq \gamma > 0$ and $1 + \nu_3(z) \geq \gamma > 0$ for all nodes $z \in \NN_h$, it follows that $1 + \mu_3 \geq \gamma > 0$ and $1 + \nu_3 \geq \gamma > 0$ on $\Omega$. For the proof of~{\rm (i)}, elementary computations show that
\begin{align}\label{eq:master}
\frac{\mu_i \mu_j}{1+\mu_3} - \frac{\nu_i \nu_j}{1+\nu_3} = 
- P_{ij} \, \frac{d_3}{1+\nu_3} + \frac{\mu_j d_i}{1+ \nu_3} + \frac{\nu_i d_j}{1+ \nu_3}.
\end{align}
Together with Lemma~\ref{lemma:auxiliary1}~{\rm (i)}, this proves~{\rm (i)}. For the proof of~{\rm (ii)}, let $k \in \{1,2,3\}$. We differentiate the terms in~\eqref{eq:master} separately and obtain that
\begin{subequations}\label{eq:seperateterms}
\begin{align}
\partial_k \Big( \, P_{ij} \, \frac{d_3}{1+\nu_3} \, \Big) 
& = 
( \partial_k P_{ij} ) \, \frac{d_3}{1+\nu_3} 
+ P_{ij} \, \frac{(\partial_k d_3)}{1+\nu_3} 
- P_{ij} \, \frac{d_3 (\partial_k \nu_3)}{(1+\nu_3)^2} =: T_1 + T_2 + T_3, \label{eq:seperateterms1}
\\
\partial_k \Big( \, \frac{\mu_j d_i}{1+ \nu_3} \, \Big) &= 
\frac{(\partial_k \mu_j) d_i}{1+ \nu_3} + 
\frac{ \mu_j (\partial_k d_i)}{1+ \nu_3} - 
\frac{\mu_j d_i (\partial_k \nu_3)}{(1+ \nu_3)^2} =: T_4 + T_5 + T_6,
\label{eq:seperateterms2} \\
\partial_k \Big( \, \frac{\nu_i d_j}{1+ \nu_3} \, \Big) &= 
\frac{(\partial_k \nu_i) d_j}{1+ \nu_3} + \frac{ \nu_i (\partial_k d_j)}{1+ \nu_3} - 
\frac{\nu_i d_j (\partial_k \nu_3)}{(1+ \nu_3)^2} =: T_7 + T_8 + T_9 .
\label{eq:seperateterms3}
\end{align}
\end{subequations}
With $1 + \mu_3 \geq \gamma > 0$ and $1 + \nu_3 \geq \gamma > 0$, Lemma~\ref{lemma:auxiliary1}~{\rm (i)--(ii)} yields that
\begin{align*}
\sum_{i \in \{ 1,3,4,6,7,9 \}} |T_{i}|
& \lesssim \gamma^{-2} \,
\big( \, \norm{\nabla \mmu_h}{\LL^\infty(\Omega)} + \norm{\nabla \nnu_h}{\LL^\infty(\Omega)} \, \big) 
\norm{\mmu_h - \nnu_h}{\LL^{\infty}(\Omega)}, 
\quad \textrm{and} \\
\sum_{i \in \{ 2,5,8 \}} |T_{i}|
 & \lesssim 
\gamma^{-1} \,
\norm{\nabla \mmu_h - \nabla \nnu_h}{\LL^{\infty}(\Omega)}.
\end{align*}
Together with~\eqref{eq:master}, this proves~{\rm (ii)}. For the proof of~{\rm (iii)}, let $\ell, k \in \{1,2,3\}$. We differentiate the terms in~\eqref{eq:seperateterms} separately and exploit that the second derivative of piecewise affine functions is zero. We start from~\eqref{eq:seperateterms1}. Elementwise, the product rule yields that
\begin{subequations}\label{eq:seperateterms4}
\begin{align}
\partial_{\ell} \bigg( \, (\partial_k  \, P_{ij}) \, \frac{d_3}{1+\nu_3} \, \bigg)
& = (\partial_{\ell} \partial_k P_{ij}) \frac{d_3}{1+\nu_3}
+ (\partial_{k} P_{ij}) \, \frac{\partial_\ell d_3}{1+\nu_3} - 
(\partial_{k} P_{ij}) \, \frac{d_3 (\partial_\ell \nu_3)}{(1+\nu_3)^2}, \notag \\
& =: \widetilde{T}_1 + \widetilde{T}_2 + \widetilde{T}_3.
\\
\partial_{\ell} \bigg( \, P_{ij} \, \frac{(\partial_k d_3)}{1+\nu_3} \, \bigg) & = 
(\partial_{\ell} P_{ij}) \, \frac{(\partial_k d_3)}{1+\nu_3} - 
P_{ij} \frac{(\partial_k d_3)(\partial_{\ell} \nu_3)}{(1+\nu_3)^2} =: \widetilde{T}_4 + \widetilde{T}_5, \\
\partial_{\ell} \bigg( \, P_{ij} \, \frac{d_3 (\partial_k \nu_3)}{(1+\nu_3)^2} \, \bigg) & =
(\partial_{\ell} P_{ij}) \frac{d_3 (\partial_k \nu_3)}{(1+\nu_3)^2} + 
P_{ij} \, \frac{(\partial_{\ell} d_3) (\partial_k \nu_3)}{(1+\nu_3)^2} - 
2 P_{ij} \, \frac{d_3 (\partial_k \nu_3)(\partial_{\ell} \nu_3)}{(1+\nu_3)^3} \notag \\
& =: \widetilde{T}_6 + \widetilde{T}_7 + \widetilde{T}_8.
\end{align}
\end{subequations}
Next, we get for the terms from~\eqref{eq:seperateterms2} elementwise that
\begin{subequations}\label{eq:seperateterms5}
\begin{align}
\partial_{\ell} \bigg( \, \frac{(\partial_k \mu_j) d_i}{1+ \nu_3} \, \bigg) 
& =
\frac{(\partial_k \mu_j) (\partial_{\ell} d_i)}{1+ \nu_3}
- \frac{(\partial_k \mu_j) d_i (\partial_{\ell} \nu_3)}{(1+ \nu_3)^2} =: \widetilde{T}_9 + \widetilde{T}_{10}, \\
\partial_{\ell} \bigg( \, \frac{ \mu_j (\partial_k d_i)}{1+ \nu_3} \, \bigg) & = 
\frac{ (\partial_{\ell} \mu_j) (\partial_k d_i)}{1+ \nu_3} - 
\frac{ \mu_j (\partial_k d_i) (\partial_\ell \nu_3)}{(1+ \nu_3)^2} =: \widetilde{T}_{11} + \widetilde{T}_{12}, \\
\partial_{\ell} \bigg( \, \frac{\mu_j d_i (\partial_k \nu_3)}{(1+ \nu_3)^2} \, \bigg) & = 
\frac{(\partial_{\ell} \mu_j) d_i (\partial_k \nu_3)}{(1+ \nu_3)^2} + 
\frac{ \mu_j (\partial_{\ell} d_i) (\partial_k \nu_3)}{(1+ \nu_3)^2} - 
2 \frac{ \mu_j d_i (\partial_k \nu_3) (\partial_{\ell} \nu_3)}{(1+ \nu_3)^3} \notag \\
& =: \widetilde{T}_{13} + \widetilde{T}_{14} + \widetilde{T}_{15}.
\end{align}
\end{subequations}
Lemma~\ref{lemma:auxiliary1}, $1 \leq 2 / \gamma$ and $1 + \mu_3 \geq \gamma > 0$ as well as $1 + \nu_3 \geq \gamma > 0$ yield that
\begin{align*}
\sum_{i \in \{ 1,3,6,8,10,12,13,15 \}} |\widetilde{T}_{i}|
& \lesssim \gamma^{-3} \,
\big( \, \norm{\nabla \mmu_h}{\LL^\infty(\Omega)} + \norm{\nabla \nnu_h}{\LL^\infty(\Omega)} \, \big)^2 \,
\norm{\mmu_h - \nnu_h}{\LL^{\infty}(\Omega)}, 
\quad \textrm{and}
\\
\sum_{i \in \{ 2,4,5,7,9,11,14 \}} |\widetilde{T}_{i}| 
& \lesssim \gamma^{-2} \,
\big( \, \norm{\nabla \mmu_h}{\LL^\infty(\Omega)} + \norm{\nabla \nnu_h}{\LL^\infty(\Omega)} \, \big) \,
\norm{\nabla \mmu_h - \nabla \nnu_h}{\LL^{\infty}(\Omega)}.
\end{align*}
Note that the terms in~\eqref{eq:seperateterms3} are obtained if in~\eqref{eq:seperateterms2} we replace  
$\mu_i$ with $\nu_i$ and $d_i$ with $d_j$. Hence, we can apply the same arguments as in~\eqref{eq:seperateterms5}. This proves~{\rm (iii)}.
\end{proof}

\begin{lemma}[{{\cite[Lemma 3.1]{ABV14}}}]\label{lemma:abstract_matrix_setting}
Let $\matrixB \in \R^{2N \times 2N}$ be a positive definite matrix and $\matrixB_0 \in \R^{2N \times 2N}$ be a symmetric positive definite matrix, which satisfy for $c_1,c_2 > 0$ that
\begin{align*}
\vectorX \cdot \matrixB \vectorX & \, \geq \, 
c_1 \, \vectorX \cdot \matrixB_0 \vectorX \quad \textrm{for all } \vectorX \in \R^{2N}
\quad \textrm{and } \\
\vectorX \cdot \matrixB \vectorY & \, \leq \, 
c_2 \, \big( \, \vectorX \cdot \matrixB_0 \vectorX \, \big)^{1/2} 
\, \big( \, \vectorY \cdot \matrixB_0 \vectorY \, \big)^{1/2} \quad \textrm{for all } \vectorX, \vectorY \in \R^{2N}.
\end{align*}
Then, it holds that
\begin{align*}
\vectorX \cdot \matrixB^{-1} \vectorX & \, \geq \, 
\frac{c_1}{c_2^2} \, \vectorX \cdot \matrixB_0^{-1} \vectorX \quad \textrm{for all } \vectorX \in \R^{2N}
\quad \textrm{and } \\
\vectorX \cdot \matrixB^{-1} \vectorY & \, \leq \, 
{c_1}^{-1} \, \big( \, \vectorX \cdot \matrixB_0^{-1} \vectorX \, \big)^{1/2} 
\, \big( \, \vectorY \cdot \matrixB_0^{-1} \vectorY \, \big)^{1/2} \quad \textrm{for all } \vectorX, \vectorY \in \R^{2N}.
\tag*{\qed}
\end{align*}
\end{lemma}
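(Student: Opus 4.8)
The plan is to transport the two hypotheses into the scalar product induced by the symmetric positive definite matrix $\matrixB_0$ and then to invoke the standard finite-dimensional fact that the inverse of an operator which is coercive and bounded with respect to a fixed scalar product is again coercive and bounded, with the roles of the constants exchanged. This is a symmetrised Lax--Milgram-type statement, and I do not expect a genuine obstacle; the only care needed is in keeping the asymmetric constants $c_1/c_2^2$ and $1/c_1$ straight and in translating between the Euclidean pairings $\vectorX \cdot \matrixB^{-1} \vectorY$, $\vectorX \cdot \matrixB_0^{-1} \vectorY$ and the $\matrixB_0$-scalar product.

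First, I would set $\inner{\vectorX}{\vectorY}_0 := \vectorX \cdot \matrixB_0 \vectorY$, which is a scalar product on $\R^{2N}$ with norm $\norm{\vectorX}{0} := \inner{\vectorX}{\vectorX}_0^{1/2}$ since $\matrixB_0 = \matrixB_0^T$ is positive definite, and $\mathbf{G} := \matrixB_0^{-1} \matrixB$. Using $\matrixB_0^{-T} = \matrixB_0^{-1}$ one checks $\inner{\mathbf{G} \vectorX}{\vectorY}_0 = \matrixB \vectorX \cdot \vectorY$, so the two hypotheses become $\inner{\mathbf{G} \vectorX}{\vectorX}_0 \geq c_1 \norm{\vectorX}{0}^2$ and $\inner{\mathbf{G} \vectorX}{\vectorY}_0 \leq c_2 \norm{\vectorX}{0} \norm{\vectorY}{0}$ for all $\vectorX, \vectorY \in \R^{2N}$. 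Coercivity forces $\mathbf{G}$, hence $\matrixB$, to be invertible, with $\matrixB^{-1} = \mathbf{G}^{-1} \matrixB_0^{-1}$. Writing $\vectorU := \matrixB_0^{-1} \vectorX$ and $\vectorV := \matrixB_0^{-1} \vectorY$, a direct computation gives the identities $\vectorX \cdot \matrixB^{-1} \vectorY = \inner{\vectorU}{\mathbf{G}^{-1} \vectorV}_0$ and $\vectorX \cdot \matrixB_0^{-1} \vectorX = \norm{\vectorU}{0}^2$, so it remains to prove $\inner{\mathbf{G}^{-1} \vectorU}{\vectorU}_0 \geq (c_1/c_2^2)\, \norm{\vectorU}{0}^2$ and $\inner{\mathbf{G}^{-1} \vectorU}{\vectorV}_0 \leq c_1^{-1}\, \norm{\vectorU}{0}\, \norm{\vectorV}{0}$ for all $\vectorU, \vectorV \in \R^{2N}$.

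Next, for fixed $\vectorU$ I would set $\vectorX := \mathbf{G}^{-1} \vectorU$ and read off from coercivity and Cauchy--Schwarz that $c_1 \norm{\vectorX}{0}^2 \leq \inner{\mathbf{G} \vectorX}{\vectorX}_0 = \inner{\vectorU}{\vectorX}_0 \leq \norm{\vectorU}{0} \norm{\vectorX}{0}$, i.e.\ $\norm{\mathbf{G}^{-1} \vectorU}{0} \leq c_1^{-1} \norm{\vectorU}{0}$; a second Cauchy--Schwarz step then yields the boundedness estimate for $\mathbf{G}^{-1}$. For the coercivity estimate I would use, with the same $\vectorX$ and $\vectorU = \mathbf{G}\vectorX$, that boundedness of $\mathbf{G}$ gives $\norm{\vectorU}{0} = \norm{\mathbf{G}\vectorX}{0} = \sup_{\vectorY \neq \0} \inner{\mathbf{G}\vectorX}{\vectorY}_0 / \norm{\vectorY}{0} \leq c_2 \norm{\vectorX}{0}$, and then $\inner{\mathbf{G}^{-1}\vectorU}{\vectorU}_0 = \inner{\mathbf{G}\vectorX}{\vectorX}_0 \geq c_1 \norm{\vectorX}{0}^2 \geq (c_1/c_2^2)\, \norm{\vectorU}{0}^2$. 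Translating these two operator inequalities back through the identities above gives the claim. I expect the write-up to be routine throughout; the only mildly delicate point is the sign and constant bookkeeping, in particular the asymmetry between the coercivity constant $c_1/c_2^2$ and the boundedness constant $c_1^{-1}$ of $\mathbf{G}^{-1}$.
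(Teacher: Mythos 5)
Your argument is correct and complete: passing to the $\matrixB_0$-scalar product, the hypotheses say exactly that $\mathbf{G}:=\matrixB_0^{-1}\matrixB$ is $c_1$-coercive and $c_2$-bounded with respect to $\inner{\cdot}{\cdot}_0$, your identities $\vectorX\cdot\matrixB^{-1}\vectorY=\inner{\matrixB_0^{-1}\vectorX}{\mathbf{G}^{-1}\matrixB_0^{-1}\vectorY}_0$ and $\vectorX\cdot\matrixB_0^{-1}\vectorX=\|\matrixB_0^{-1}\vectorX\|_0^2$ are valid (using $\matrixB^{-1}=\mathbf{G}^{-1}\matrixB_0^{-1}$ and the symmetry of $\matrixB_0$), and the two estimates for $\mathbf{G}^{-1}$ with constants $1/c_1$ and $c_1/c_2^2$ follow exactly as you write. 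Note that the paper does not prove this lemma at all — it is quoted from \cite[Lemma~3.1]{ABV14} — so there is no in-paper argument to compare with; your proof is the standard self-contained one (coercive and bounded operators in a Hilbert space have inverses that are bounded by $c_1^{-1}$ and coercive with constant $c_1/c_2^2$), and the constant bookkeeping matches the statement.
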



\bibliographystyle{alpha}
\bibliography{ref}
\end{document}